\newtheorem{teor}{Theorem}[section] \newtheorem{corol}[teor]{Corollary}
\newtheorem{prop}[teor]{Proposition} \newtheorem{lem}[teor]{Lemma}
\newtheorem{prop-def}[teor]{Proposition-Definition}
\theoremstyle{definition} \newtheorem{defin}[teor]{Definition}
 \newtheorem{ejem}[teor]{Example}
\theoremstyle{remark}
\newtheorem{obs}[teor]{Remark}\newtheorem{nota}[teor]{Remark}
\newcommand\M{\mathcal{M}}
\newcommand{\gap}{\textsf{GAP}}
\newcommand{\Sp}{\operatorname{Sp}}
\newcommand{\ASp}{\operatorname{ASp}}
\newcommand{\St}{\operatorname{St}}
\newcommand{\Ps}{\operatorname{Ps}}
\newcommand{\APs}{\operatorname{APs}}
\newcommand{\Alt}{\operatorname{Alt}}
\newcommand{\Bil}{\operatorname{Bil}}
\newcommand{\Quad}{\operatorname{Quad}}
\newcommand{\G}{\mathcal{G}}
\newcommand{\anupq}{\textsf{ANUPQ}}
  \newcommand\s{\sigma}\newcommand
\newcommand\id{\operatorname{id}}
 \newcommand\Tr{\operatorname{Tr}}
 \newcommand\co{\operatorname{co}}
 \newcommand\Alg{\operatorname{Alg}}
\newcommand\Hom{\operatorname{Hom}} \newcommand\Rep{\operatorname{Rep}}
\newcommand\Ind{\operatorname{Ind}}\newcommand\Aut{\operatorname{Aut}}
\newcommand\Gal{\operatorname{Gal}} 
 \newcommand\can{\operatorname{can}}
\begin{document}

\title{Isocategorical groups and their  Weil representations} \author{C\'esar
Galindo} \address{Departamento de Matem\'aticas,
Universidad de los Andes, Bogot\'a, Co\-lom\-bia}
\email{cn.galindo1116@uniandes.edu.co, cesarneyit@gmail.com}
\keywords{Hopf-Galois objects, tensor categories, isocategorical groups, Weil representations.}
\subjclass[2000]{16W30, 20C05}

\begin{abstract}
Two groups are called isocategorical over a field $k$ if their res\-pective categories of $k$-linear representations are monoidally equivalent. We classify isocategorical groups over arbitrary fields, extending the earlier classification of Etingof-Gelaki and Davydov  for algebraically closed fields. In order to construct  concrete examples of isocategorical groups a new variant of the Weil representation associated to isocategorical groups is defined. We construct examples of non-isomorphic isocategorical groups over any field of characteristic different from two  and rational Weil representations associated to symplectic spaces over finite fields of characteristic two. 
\end{abstract}

\maketitle
\setcounter{tocdepth}{1}
\tableofcontents
\section{Introduction}

\textbf{1.}\  Two groups are called isocategorical over a field $k$ if their respective categories of $k$-linear representations are monoidally equivalent. The goals of this paper are to classify isocategorical groups over arbitrary fields, extending a previous classification of Etingof-Gelaki \cite{isocategorical} and Davydov \cite{Davydov}  for algebraically closed fields and to introduce a new variant of the Weil representation associated to simple Galois algebras and isocategorical groups. These Weil representations include as particular cases the unitary Weil representations associated to symplectic and quadratic spaces over a finite fields of characteristic two.

\textbf{2.}\  Let $G$ be an algebraic  group over a field $k$. The category $\Rep_k(G)$ of finite dimensional $k$-linear representations is a symmetric tensor category over $k$. Symmetric tensor categories of this kind can be characterized using Tannaka duality theory, as those admitting a symmetric fiber functor, \cite{Saavedra}. Moreover, if $k$ is algebraically closed of characteristic $0$, the \textit{symmetric} tensor category $\Rep_k(G)$ determines the group $G$ up to isomorphism, see \cite[Theorem 3.2]{Deligne-Milne}. Despite this, there exist examples of non isomorphic isocategorical groups, for example,  the affine symplectic group $\ASp(V):= V\rtimes \Sp(V)$ and the pseudo-symplectic group $\APs(V)$ ($V$ a symplectic space over the field of two elements ) are non-isomorphic isocategorical groups over $\mathbb{C}$, \cite{Davydov, isocategorical}  (however, they are not isocategorical  over $\mathbb{R}$, see Proposition \ref{Prop:No isocategorical sobre R}). 

\textbf{3.}\  The Weil representation is a unitary projective representation of the symplectic group over a local field, \cite{pseudosymplectic}. When the local field is non-archimedean with residual field of characteristic two the Weil representation is a real projective representation of $\Ps(V)$ the pseudo-symplectic group, see \cite{Weil-fini-adv}. Isocategorical groups over arbitrary fields are closely related to Weil representations, for example, the affine orthogonal group and the pseudo-symplectic group of a quadratic space over a finite field of characteristic two are non-isomorphic isocategorical over  $\mathbb{Q}$, see Proposition \ref{Prop:Pseudo simplect y affine orthogonal son isocat}.

As was pointed in \cite{Weil-fini-adv} and \cite{isocategorical} the unitary Weil representation of $\APs(V)$ is an extension of the Weil representation of $\Ps(V)$. However, the Weil representation of $\APs(V)$ is not real. Is natural to ask if there exists a \textit{real} Weil representation associated to a symplectic space over a finite field of characteristic  two. We give a positive answer to this question and provide some examples of isocategorical groups over $\mathbb{Q}$ of a nature slightly different from the affine orthogonal and pseudo-symplectic group.

\textbf{4.}\  We would like to finish the introduction by pointing out an interesting relation between isocategorical groups and stringy orbifold theory.  The  Drinfel'd double $D(k[G])$  of a group algebra plays an important roll in stringy orbifold theory, for example (see \cite{KP} and \cite{MS}):
\begin{itemize}
\item The category of $G$-Frobenius algebras arising in global orbifold cohomology or $K$-theory  is the category of Frobenius algebras
in the modular category of finite dimensional $D(k[G])$-modules.
\item The Grothendieck ring of the modular category of finite dimensional $D(k[G])$-modules can be realized geometrically as $K_{orb}^{k}([*/G])$  the stringy $K$-theory of the orbifold  $[*/G]$.
\end{itemize}

The category of representation of the Drinfel'd double has a conceptual interpretation as the Drinfel'd center of the category Rep$_k(G)$ of $k$-linear representations of $G$. Thus, the Drinfel'd doubles of isocategorical groups are braided equivalents. Then as application of our main results we explicitly construct some family of example of pairs of non-isomorphic groups with the same category of $G$-Frobenius algebras and same  stringy $K$-theories for every field $k$ of characteristic different to two.

\textbf{5.}\  The paper is organized as follows: In Section \ref{preliminare} we discuss the classification of Galois algebras of finite groups  and their relation with  isocategorical groups. In Section \ref{Seccion:Isocategorical groups over arbitrary fields} we give a classification of isocategorical groups over arbitrary fields and we study in detail isocategorical groups over formally real fields. The main results of this section are Theorem \ref{main result 2} and Corollary \ref{Corol Clasification over Q nad Z[1/2]}. In Section  \ref{Section:Galois algebras and  Weil representations} we introduce the Weil representation associated to  a simple Galois algebra over a finite abelian group. We develope in detail the case of Weil representations associated to arbitrary symplectic and quadratic modules. In Section \ref{Section:Examples of non-isomorphic isocategorical groups and  Weil representations} we present some concrete examples of non-isomorphic isocategorical groups over $\mathbb{Q}$ and an application to the Weil representation associated to a quadratic space over a finite field of characteristic two.

\section{Preliminaries}\label{preliminare}

Our main reference for the general theory of Hopf algebra is \cite{Mont}. We make free use of Sweedler's notation for comultiplcations and comodule structures, omitting the summation symbols: $\Delta(c)=c_1\otimes c_2$,  $\rho(v)=v_0\otimes v_1$ for right  comodules and $\lambda(v)=v_{-1}\otimes v_0$ for left comodules.

\subsection{Galois objects of Hopf algebras and Galois algebras of finite groups}\label{prels}

In this section we review some definition and results on Hopf Galois extensions
that we will need later. We refer the reader to \cite{galois-survey-schauenburg} and \cite{survey-susan} for a detailed exposition on the subject.

\begin{defin}
Let $k$ be a commutative base ring and $H$ a Hopf algebra over $k$.  A \emph{right $H$-Galois object} is a right $H$-comodule algebra $(A,\rho)$ such that $A$ is a faithfully flat $k$-module, $A^{\co H}=k$ and the canonical map
$$\can: A\otimes A\to A\otimes H, \  \  x\otimes y\mapsto xy_{(0)}\otimes y_{(1)},$$ is bijective.
Left $H$-Galois objects are defined similarly.
\end{defin}
A morphism of $H$-Galois object is an $H$-colinear algebra map. It is known that a morphism of $H$-Galois objects is an isomorphism.

\begin{defin}
An $(L, H)$-Bigalois object is an $(L, H)$-bicomodule algebra $A$
which is simultaneously a left $L$-Galois object and right
$H$-Galois object.
\end{defin}

For any right $H$-Galois object $A$ there is an associated Hopf
algebra $L = L(A, H)$, called the \emph{left Galois} Hopf algebra,
such that $A$ is in a natural way an $(L, H)$-Bigalois object.

\begin{obs}\label{obs: obj cleft en dimension finita} An $H$-Galois object $A$ is called \emph{cleft} if there is an $H$-colinear convolution invertible map $H\to A$.  In case that $A$ is cleft, the Hopf algebra  $L(A, H)$ is obtained by a \emph{cocycle deformation}  \cite[Theorem 3.9]{BiGal-Schau}, in particular for  finite dimensional Hopf algebras over  a field every Galois object is cleft \cite[Proposition 2]{cleft-finito}, so $\dim_k(L(A,H))=\dim_k (H)$. \end{obs}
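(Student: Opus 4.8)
The substantive content of the remark is the final equality $\dim_k(L(A,H))=\dim_k(H)$; the definition of cleftness and the two structural facts are invoked from \cite[Theorem 3.9]{BiGal-Schau} and \cite[Proposition 2]{cleft-finito}. The plan is therefore to run the chain of implications ``finite dimensional over a field $\Rightarrow$ cleft $\Rightarrow$ cocycle deformation $\Rightarrow$ equal dimension'' and to isolate the only step that carries genuine information for the dimension count.

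First I would recall the precise form of a cocycle deformation. Given a normalized, convolution-invertible $2$-cocycle $\sigma\colon H\otimes H\to k$, satisfying $\sigma(b_1,c_1)\,\sigma(a,b_2c_2)=\sigma(a_1,b_1)\,\sigma(a_2b_2,c)$ together with $\sigma(a,1)=\sigma(1,a)=\epsilon(a)$, the deformation $H^\sigma$ is the Hopf algebra with the same underlying coalgebra as $H$ and deformed product $a\cdot_\sigma b=\sigma(a_1,b_1)\,a_2b_2\,\sigma^{-1}(a_3,b_3)$. The single observation I need is that this construction alters only the algebra structure: the underlying $k$-module, and in particular its coalgebra structure, is literally that of $H$.

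With this in place the argument is immediate. Since $A$ is finite dimensional over the field $k$, \cite[Proposition 2]{cleft-finito} shows that $A$ is cleft, so \cite[Theorem 3.9]{BiGal-Schau} presents $L(A,H)$ as a cocycle deformation $H^\sigma$ for some convolution-invertible $\sigma$. As $H^\sigma$ and $H$ share the same underlying vector space, we conclude $\dim_k(L(A,H))=\dim_k(H^\sigma)=\dim_k(H)$.

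The part requiring care is not any computation but the verification that the finiteness-over-a-field hypothesis really places us in the cleft case: cleftness is exactly what guarantees $L(A,H)$ arises by twisting the multiplication rather than by a deformation that could change the underlying module, and it is the passage through \cite[Proposition 2]{cleft-finito} that supplies it. Once cleftness is secured, the preservation of the underlying vector space under cocycle deformation closes the argument with no further work.
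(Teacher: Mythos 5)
Your proposal is correct and follows exactly the chain the paper itself uses: finite dimensionality over a field gives cleftness by \cite[Proposition 2]{cleft-finito}, cleftness makes $L(A,H)$ a cocycle deformation $H^\sigma$ by \cite[Theorem 3.9]{BiGal-Schau}, and since a cocycle deformation only twists the multiplication while keeping the underlying coalgebra (hence vector space) of $H$, the dimensions agree. The paper gives no further argument beyond these citations, so your fleshed-out version is the same proof with the key point (preservation of the underlying $k$-module under deformation) made explicit.
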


If $G$ is  a finite group, we will denote by  $\mathcal{O}_k(G)$, the Hopf algebra of regular function of the constant group scheme over  $k$, that is,  $\mathcal{O}_k(G)$ is the free $k$-module with a basis  $\{\delta_g\}_{g \in G}$,   multiplication $m(\delta_g,\delta_h)=\delta_{g,h}\delta_g$ and comultiplication $\Delta(\delta_g)=\sum_{x,y \in G: xy=g}\delta_x\otimes \delta_y$, for all $g,h \in G$.

The category of  $\mathcal{O}_k(G)$-comodule algebras is the same as the category of $k$-algebras endowed with an action of $G$ by $k$-algebra automorphisms. If $A$ is a $G$-algebra, $A^{\co \mathcal{O}_k(G)}=A^G$, the subalgebra of $G$-invariants.

For simplicity, a $\mathcal{O}_k(G)$-Galois object will be called just a $G$-Galois algebra over $k$ or just a $G$-Galois algebra if the base ring is clear. Analogously a $(G_1,G_2)$-Bigalois algebra is just a $(\mathcal{O}_k(G_2)-\mathcal{O}_k(G_1))$-Bigalois object.

\subsection{Isocategorical groups  and Bigalois Algebras}

We will use freely the basic language of monoidal categories theory, for more reference see \cite{Bak-Kir} and \cite{Mac-Lane}.

If $H$ is a Hopf algebra we denote by $\M^H$ the $k$-linear monoidal category of all rigth $H$-comodules.

Given a finite group $G$,  we will denote by Rep$_k(G)$, the monoidal category of all (left) $k[G]$-modules. Note that Rep$_k(G)=\M^{\mathcal{O}_k(G)}$ the tensor category of $k$-linear representations of $G$.
\begin{defin}[\cite{isocategorical}]
Let $G_1$ and $G_2$ be two finite groups. We say that $G_1$ and $G_2$ are isocategorical over a commutative ring $k$ if the monoidal categories $\Rep_k(G_1)$ and $\Rep_k(G_2)$ are equivalents as monoidal $k$-linear categories.
\end{defin}
If $A$ is a $G$-algebra over $k$, the group of $G$-equivariant algebra automorphisms we will denote by $$\Aut_G(A)=\{f\in \Aut_{\text{Alg}}(A)| f(g \cdot a)=g\cdot f(a), \  \text{for all \ } g \in G\}.$$

\begin{lem}\label{lema Aut_G}
If $k$ is a field and $A$ is a $G$-Galois algebra, then $A$ is a $(G-\Aut_{G}(A))$-Bigalois algebra  if and only if $|\Aut_{G}(A)|=|G|$.
\end{lem}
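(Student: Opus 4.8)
The forward implication is immediate from a dimension count. If $A$ is a $(G,\Aut_G(A))$-Bigalois algebra then, in particular, $A$ is a left $\mathcal{O}_k(\Aut_G(A))$-Galois object, so $\dim_k A=\dim_k\mathcal{O}_k(\Aut_G(A))=|\Aut_G(A)|$; since $A$ is also a right $\mathcal{O}_k(G)$-Galois object it is cleft and $\dim_k A=|G|$ by Remark~\ref{obs: obj cleft en dimension finita}, whence $|\Aut_G(A)|=|G|$. The real content is the converse, which I would obtain by identifying $\Aut_G(A)$ with the group of algebra characters of the left Galois Hopf algebra. Write $H=\mathcal{O}_k(G)$ and let $L=L(A,H)$, so that $A$ is an $(L,H)$-Bigalois object with left coaction $\lambda(a)=a_{-1}\otimes a_0$, and $\dim_k L=\dim_k H=|G|$ again by Remark~\ref{obs: obj cleft en dimension finita}.

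The plan is to define $\Phi\colon\Alg_k(L,k)\to\Aut_G(A)$ by $\Phi(\chi)=(\chi\otimes\id_A)\circ\lambda$, i.e. $\Phi(\chi)(a)=\chi(a_{-1})a_0$. Because $\lambda$ is an algebra map and is right $H$-colinear (the bicomodule condition), each $\Phi(\chi)$ is an $H$-colinear, that is $G$-equivariant, algebra endomorphism of $A$; being a morphism of the Galois object $A$, it is automatically an isomorphism, so $\Phi$ indeed lands in $\Aut_G(A)$. The crux will be the bijectivity of $\Phi$, and here I would exploit that $A$ is a \emph{left} $L$-Galois object, i.e. that the canonical map $A\otimes A\to L\otimes A$, $x\otimes y\mapsto x_{-1}\otimes x_0y$, is bijective. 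For surjectivity, given $f\in\Aut_G(A)$ the linear map $\psi\colon L\otimes A\to A$ determined through this canonical map by $\psi(x_{-1}\otimes x_0y)=f(x)y$ is well defined and right $A$-linear, hence of the form $\ell\otimes a\mapsto\chi_f(\ell)a$ for a linear $\chi_f\colon L\to A$; right $H$-colinearity of $f$ and $\lambda$ forces $\chi_f(L)\subseteq A^{\co H}=k$, and one verifies that $\chi_f$ is an algebra character with $\Phi(\chi_f)=f$. For injectivity, if $\theta=\chi-\chi'$ satisfies $(\theta\otimes\id)\lambda=0$, then writing an arbitrary $\ell\otimes b$ through the canonical map as $\sum_i (x_i)_{-1}\otimes (x_i)_0y_i$ and applying $\theta\otimes\id$ gives $\theta(\ell)b=0$, so $\theta=0$.

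Finally I would run the counting argument. Algebra characters of $L$ are exactly the grouplike elements of the dual Hopf algebra $L^*$, and grouplike elements are linearly independent, so $|\Aut_G(A)|=|\Alg_k(L,k)|\le\dim_k L^*=|G|$, with equality if and only if $L^*$ is spanned by its grouplikes, that is $L^*\cong k[\Aut_G(A)]$ and hence $L\cong\mathcal{O}_k(\Aut_G(A))$ as Hopf algebras. Under the hypothesis $|\Aut_G(A)|=|G|$ this isomorphism holds, and transporting the left $L$-Galois structure along it exhibits $A$ as a left $\mathcal{O}_k(\Aut_G(A))$-Galois object; the explicit formula for $\Phi$ shows that the induced left $\mathcal{O}_k(\Aut_G(A))$-coaction is precisely the one attached to the tautological $\Aut_G(A)$-action. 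As this left structure commutes with the right $H$-structure, $A$ is a $(G,\Aut_G(A))$-Bigalois algebra, and in passing we get $A^{\Aut_G(A)}=A^{\co L}=k$ for free. The main obstacle is the bijectivity of $\Phi$, and concretely the surjectivity step, i.e. checking that the map $\chi_f$ produced above is genuinely $k$-valued and multiplicative; matching the transported coaction with the group action at the end is then routine.
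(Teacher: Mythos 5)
Your proposal is correct and takes essentially the same route as the paper: the paper's proof likewise passes to the left Galois Hopf algebra $L=L(A,\mathcal{O}_k(G))$, gets $\dim_k L=|G|$ from Remark~\ref{obs: obj cleft en dimension finita}, identifies $\Alg(L,k)\cong\Aut_G(A)$, and concludes that $L\cong\mathcal{O}_k(\Aut_G(A))$ if and only if $|\Aut_G(A)|=|G|$. The only difference is that the paper obtains the middle isomorphism by citing \cite[Corollary 3.1.4]{galois-survey-schauenburg}, whereas you prove it by hand via $\Phi$ and $\chi_f$; the multiplicativity of $\chi_f$ that you defer does go through, using the twisted multiplicativity of the translation map $\ell\mapsto\can^{-1}(\ell\otimes 1)$ together with the already established fact that $\chi_f$ takes values in $k=A^{\co \mathcal{O}_k(G)}$.
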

\begin{proof}

If $A$ is a $G$-Galois algebra, there is a unique Hopf algebra (up to isomorphisms), such that $A$ is a $L$-$\mathcal{O}_k(G)$-Bigalois object, \cite[Theorem 3.5]{BiGal-Schau}. Remark  \ref{obs: obj cleft en dimension finita} implies that $|G|=\dim_k(L)$.

By \cite[Corollary 3.1.4]{galois-survey-schauenburg}, $\Alg(L, k) \simeq \Aut_{G}(A)$, so $L\cong \mathcal{O}_k(\Aut_{G}(A))$ if and only if $|G|=|\Aut_{G}(A)|$.
\end{proof}

\begin{prop}\label{Prop equivalencias con isocat  existencia bigalois}
Let $G_1$ and $G_2$ be two finite groups and $k$ be a commutative ring. The following are equivalents:
\begin{enumerate}
  \item[(a)] $G_1$ and $G_2$ are isocategoric.
  \item[(b)] There is a $(G_1,G_2)$-Bigalois algebra $A$ over $k$.
  \item[(c)] If $k$ is a field, $(a)$ and $(b)$ are equivalent to: there is a $G_1$-Galois algebra $A$ such that $\Aut_{G1}(A)\cong G_2$ and $|G_1|=|G_2|$.
\end{enumerate}
\end{prop}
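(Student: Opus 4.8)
The plan is to deduce everything from Schauenburg's structure theory of bi-Galois objects, exploiting the identification $\Rep_k(G_i)=\M^{\mathcal{O}_k(G_i)}$ together with Lemma~\ref{lema Aut_G}. For the equivalence (a)$\Leftrightarrow$(b) I would invoke the bi-Galois theorem of Schauenburg \cite{BiGal-Schau}: for Hopf algebras $H$ and $L$ the comodule categories $\M^{H}$ and $\M^{L}$ are equivalent as $k$-linear monoidal categories if and only if there exists an $(L,H)$-bi-Galois object, the equivalence being realized by the cotensor product with such an object. Taking $H=\mathcal{O}_k(G_1)$ and $L=\mathcal{O}_k(G_2)$ and recalling that $\Rep_k(G_i)=\M^{\mathcal{O}_k(G_i)}$, statement (a) says precisely that $\M^{\mathcal{O}_k(G_1)}$ and $\M^{\mathcal{O}_k(G_2)}$ are monoidally equivalent, which by the theorem amounts to the existence of an $(\mathcal{O}_k(G_2),\mathcal{O}_k(G_1))$-bi-Galois object; by the notational convention introduced above this is exactly a $(G_1,G_2)$-Bigalois algebra, i.e. statement (b).

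For (b)$\Leftrightarrow$(c), with $k$ a field, I would argue as follows. Assume (b) and let $A$ be a $(G_1,G_2)$-Bigalois algebra; by definition $A$ is a right $\mathcal{O}_k(G_1)$-Galois object, that is, a $G_1$-Galois algebra, and by uniqueness of the left Galois Hopf algebra \cite[Theorem 3.5]{BiGal-Schau} its associated $L(A,\mathcal{O}_k(G_1))$ is $\mathcal{O}_k(G_2)$. Combining $\Alg(L,k)\cong\Aut_{G_1}(A)$ \cite[Corollary 3.1.4]{galois-survey-schauenburg} with the identification $\Alg(\mathcal{O}_k(G_2),k)\cong G_2$ (the $\delta_g$ form a complete set of orthogonal idempotents, so the characters are indexed by $G_2$ and convolve as in $G_2$) gives $\Aut_{G_1}(A)\cong G_2$; moreover $|G_1|=\dim_k\mathcal{O}_k(G_1)=\dim_k L=\dim_k\mathcal{O}_k(G_2)=|G_2|$ by Remark~\ref{obs: obj cleft en dimension finita}, which yields (c). Conversely, assuming (c), if $A$ is a $G_1$-Galois algebra with $\Aut_{G_1}(A)\cong G_2$ and $|G_1|=|G_2|$, then $|\Aut_{G_1}(A)|=|G_1|$, so Lemma~\ref{lema Aut_G} makes $A$ a $(G_1,\Aut_{G_1}(A))$-Bigalois algebra, hence a $(G_1,G_2)$-Bigalois algebra, which is (b).

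The only genuinely hard input is the direction (a)$\Rightarrow$(b): manufacturing an actual bi-Galois object out of an abstract $k$-linear monoidal equivalence. This is exactly the content of Schauenburg's theorem and ultimately rests on Tannakian reconstruction applied to the fiber functor coming from the equivalence; everything else is formal bookkeeping with the conventions, the computation of $\Alg(\mathcal{O}_k(G),k)$, and Lemma~\ref{lema Aut_G}. I would therefore concentrate the write-up on correctly matching the left/right Galois sides through the $(G_1,G_2)$ versus $(\mathcal{O}_k(G_2),\mathcal{O}_k(G_1))$ convention, since that is the most error-prone point of the argument.
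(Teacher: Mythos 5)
Your proof is correct and follows essentially the same route as the paper: the equivalence (a)$\Leftrightarrow$(b) is exactly Schauenburg's bi-Galois theorem (cited in the paper as \cite[Corolario 5.7]{BiGal-Schau}), and (b)$\Leftrightarrow$(c) is precisely Lemma~\ref{lema Aut_G}, whose ingredients (uniqueness of the left Galois Hopf algebra, $\Alg(L,k)\cong\Aut_{G_1}(A)$, and the dimension count via cleftness) you merely spell out in more detail than the paper does.
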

\begin{proof}
Equivalence $(a)$ and $(b)$ is \cite[Corolario 5.7]{BiGal-Schau} and the equivalence between $(b)$ and $(c)$ follows by Lemma \ref{lema Aut_G}.

\end{proof}

\subsection{Group cohomology}

In order to fix notations,  we will recall the usual description of group cohomology associated to the normalized Bar resolution of $\mathbb{Z}$, see \cite{Eilenberg-MacLane} for more details.  Let $N$ be a group and let $A$ be a $\mathbb{Z}[N]$-module written in multiplicative notation. Define $C^0(N,A)=A$, and for $n \geq 1$ $$C^n(N,A)=\{f:\underbrace{N\times\cdots \times N}_{n-times}\to A| f(x_1\ldots,x_n)=1, \text{ if } x_i=1_N \text{ for some }i \}.$$
Considere the following cochain complex
\begin{equation*}\label{complex}
0 \longrightarrow C^0 (N, A) \stackrel{\delta_0}{\longrightarrow }
C^1 (N, A) \stackrel{\delta_1}{\longrightarrow }C^2 (N, A) \cdots C^{n} (N, A)
\stackrel{\delta_n}{\longrightarrow } C^{n+1} (N, A) \cdots
\end{equation*} where
\begin{align*}
    \delta_n(f)(x_1,x_2,\ldots,x_{n+1})&=x_1\cdot f(x_2,\ldots,x_{n+1})\\
    &\times\prod_{i=1}^nf(x_1,\ldots,x_{i-1},x_ix_{i+1},x_{i+2},\ldots,x_{n+1})^{(-1)^{i}}\\
    &\times f(x_1,\ldots,x_{n})^{(-1)^{n+1}}.
\end{align*}
Then,  $Z^n(N,A):=\ker(\delta_n)$, $B^n(N,A):= \text{Im}(\delta_{n-1})$ and
$$H^n(N,A):=Z^n(N,A)/B^n(N,A) \  \  (n\geq 1),$$ is the group cohomology of $N$ with coefficients in $A$.

\subsubsection{Group cohomology associated to a group exact sequence}
Let  $$1\to N\to S\to Q\to 1$$ be a group exact sequence. Let $A$ be a $Q$-module, consider $A$ as a $N$-module with the trivial action and $(C^{n} (N,A),\delta_n)$ as a  cochain  complex of $S$-modules, where $C^n(N,A)$ has $S$-module structure given by $$(^g f)(x_1,\ldots, x_n)= \ ^g f(\ g^{-1} x_1g,\ldots,\  g^{-1} x_ng),$$ for all $x_1,\ldots, x_n\in N, g \in  S, f\in C^n(N,A)$.  Since the maps $ \delta_n:C^{n} (N, A) \to C^{n+1} (N, A)$ are $S$-equivariant we have the double cochain complex $C^{p,q}_{S}(N,A):=C^q(S,C^{p+1}(N,A))$, $p,q\geq 0$, then we define the complex $$C_S^n(N,A):=\text{Tot}^{n-1}(C^{*,*}_{S}(N,A)), \  \  \text{for } n>1,$$ and  the  cohomology groups $$H^n_S(N,A):= H^n(C^{*}_{S}(N,A)), \  \  \  n\geq 0.$$

For future reference it will be useful to describe the equations that define a 2-cocycle and the coboundary of a 1-cochains:

\begin{itemize}
\item The 1-cochains are  $C_S^1(N,A)=C^1(N,A)$ and the 2-cochains are $C_S^2(N,A)=C^2(N,A)\oplus C^1(S,C^1(N,A))$, so a 2-cochain is a pair of normalized functions  $\sigma: N\times N\to A, \gamma: S\times N\to A$.
\item   A 2-cocycle is a 2-cochain $(\sigma, \gamma)$ such that
\begin{align}
\sigma(x,y)\sigma (xy,z)&=\sigma(y,z)\sigma(x,yz),\label{C1}\\
^g\sigma(x,y)\gamma(g,xy)&=\sigma (^gx,^gy)\gamma(g,x)\gamma(g,y),\label{C2}\\
\gamma(gh,x)&=\ ^g \gamma(h,x)\gamma(g,^hx),\label{C3}
\end{align}
for all $x,y\in N, g,h\in S$.

\item The coboundary  of a 1-cochain $\gamma:N\to A$ is given by
\begin{align*}
\partial (\gamma)(x,y)=\frac{\gamma(y)\gamma(x)}{\gamma(xy)},\\
\partial(\gamma)(g,x)= \frac{^g \gamma(x)}{\gamma(^g x)},
\end{align*}for all $g\in S, \ x,y\in N$.
\end{itemize}
\begin{prop}\label{Prop:chomologia normalizada}
Let  $ Z^2_S(N,A)_{n}$ be the subgroup of all 2-cocycle  $(\sigma,\gamma)$  such that $$\gamma(x,y)=\sigma(x,y)\sigma(xy,x^{-1})\sigma(x,x^{-1})^{-1}$$ for all $x, y\in N$.   Then $B^2_S(N,A)\subset Z^2_S(N,A)_{n}$ and $$H^2_S(N,A)\cong Z^2_S(N,A)_{n}/B^2_S(N,A).$$
\end{prop}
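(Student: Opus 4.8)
The plan is to realize $H^2_S(N,A)$ through the tautological map induced by the inclusion $Z^2_S(N,A)_n\hookrightarrow Z^2_S(N,A)$ and to prove that it is well defined and bijective on cohomology. Write $\beta_\sigma(x,y):=\sigma(x,y)\sigma(xy,x^{-1})\sigma(x,x^{-1})^{-1}$ for $x,y\in N$, so that a cocycle $(\sigma,\gamma)$ lies in $Z^2_S(N,A)_n$ precisely when $\gamma(x,y)=\beta_\sigma(x,y)$ on $N\times N$. Since $\beta_{(-)}$ is built multiplicatively from $\sigma$, the defining relation $\gamma=\beta_\sigma$ is homomorphic in $(\sigma,\gamma)$, so $Z^2_S(N,A)_n$ is a subgroup; once $B^2_S(N,A)\subset Z^2_S(N,A)_n$ is established the quotient makes sense and the induced map $Z^2_S(N,A)_n/B^2_S(N,A)\to H^2_S(N,A)$ has kernel $(Z^2_S(N,A)_n\cap B^2_S(N,A))/B^2_S(N,A)=0$, hence is injective. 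Thus everything reduces to the inclusion of coboundaries and to surjectivity on classes.

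First I would dispatch the inclusion $B^2_S(N,A)\subset Z^2_S(N,A)_n$ by a direct computation. For a $1$-cochain $\mu\colon N\to A$ its two boundary components are $p(x,y)=\mu(y)\mu(x)\mu(xy)^{-1}$ (the $\sigma$-part) and, since $N$ acts trivially on $A$, $q(x,y)=\mu(y)\mu(xyx^{-1})^{-1}$ (the $\gamma$-part restricted to $N\times N$). Then the product $p(x,y)\,p(xy,x^{-1})\,p(x,x^{-1})^{-1}$ telescopes, using $\mu(1)=1$, to $\mu(y)\mu(xyx^{-1})^{-1}=q(x,y)$. Hence $\partial\mu$ satisfies $\gamma=\beta_\sigma$ and lies in $Z^2_S(N,A)_n$, which also confirms the first assertion of the statement.

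The heart of the argument is surjectivity: I must show every class has a representative whose $\gamma$-part restricts to $\beta_\sigma$ on $N\times N$. I would exploit the cocycle identities to pin down $\gamma|_{N\times N}$ in terms of $\sigma$ up to a principal ambiguity. Specializing \eqref{C2} to $g=x\in N$ (where ${}^x$ acts trivially on $A$) gives the twisted multiplicativity $\gamma(x,ab)=\sigma(xax^{-1},xbx^{-1})\sigma(a,b)^{-1}\gamma(x,a)\gamma(x,b)$; the associativity \eqref{C1} shows $\beta_\sigma(x,-)$ satisfies the identical identity, which is most transparent in the central extension $E_\sigma$ of $N$ by $A$ determined by $\sigma$, where $s(x)s(y)s(x)^{-1}=\beta_\sigma(x,y)\,s(xyx^{-1})$. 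Likewise \eqref{C3} with $g,h\in N$ yields $\gamma(gh,z)=\gamma(h,z)\gamma(g,hzh^{-1})$, and the same relation holds for $\beta_\sigma$. Consequently $c:=\gamma|_{N\times N}\cdot\beta_\sigma^{-1}$ is additive in each slot in the appropriate twisted sense, and the remaining task is to absorb $c$ into a coboundary $\partial\mu$, after which the adjusted cocycle lies in $Z^2_S(N,A)_n$ and is cohomologous to $(\sigma,\gamma)$.

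I expect this last normalization to be the main obstacle. The identities \eqref{C1}--\eqref{C3} only constrain $\gamma|_{N\times N}$ up to a bihomomorphism-type defect, and the delicate point is to see that this defect is always principal, so that the inner $N$-conjugation on $E_\sigma$ can be realized by a genuine coboundary on the total complex. This is exactly where I would have to use the triviality of the $N$-action on $A$ together with the compatibility of $\gamma$ on all of $S\times N$ — not merely on $N\times N$ — so that the freedom encoded by \eqref{C3} for $g\in S\setminus N$ can be brought to bear; establishing that the normalized cocycles are cofinal in each cohomology class is the crux of the proof.
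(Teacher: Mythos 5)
The two things you actually prove are correct, and they are exactly the part the paper dismisses as ``a straightforward calculation'': your telescoping identity gives $B^2_S(N,A)\subset Z^2_S(N,A)_n$, and injectivity of the induced map $Z^2_S(N,A)_n/B^2_S(N,A)\to H^2_S(N,A)$ is then formal. What you never prove is surjectivity, i.e.\ that every class in $H^2_S(N,A)$ has a normalized representative; you name it as ``the crux'' and stop. That is the entire content of the isomorphism, and it is precisely the step the paper's proof addresses: the paper reduces to the case $Q=1$, where it asserts that $\sigma\mapsto(\sigma,\gamma_\sigma)$, with $\gamma_\sigma(x,y)=\sigma(x,y)\sigma(xy,x^{-1})\sigma(x,x^{-1})^{-1}$, is a quasi-isomorphism $Z^2(N,A)\to Z^2_N(N,A)$; from this it extracts a $1$-cochain $\mu\in C^1(N,A)$ whose coboundary normalizes the restricted cocycle $(\sigma,\gamma|_{N\times N})$, and then uses $C^1_S(N,A)=C^1(N,A)$ together with compatibility of the coboundary formulas under restriction to conclude that the same $\partial\mu$ normalizes $(\sigma,\gamma)$ inside $Z^2_S(N,A)$. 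Your proposal contains no substitute for this reduction.

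Worse, the route you sketch cannot be completed, and your own computation shows why. Write $\beta_\sigma(x,y)=\sigma(x,y)\sigma(xy,x^{-1})\sigma(x,x^{-1})^{-1}$, which is multiplicative in $\sigma$. Your telescoping identity says exactly that for every $\mu\in C^1(N,A)$ the $\gamma$-component of $\partial\mu$ restricted to $N\times N$ equals $\beta_{\delta\mu}$. Hence multiplying any cocycle $(\sigma,\gamma)$ by any coboundary multiplies $\gamma|_{N\times N}$ and $\beta_\sigma$ by the \emph{same} function, so your defect $c=\gamma|_{N\times N}\cdot\beta_\sigma^{-1}$ is invariant under coboundaries: it can never be ``absorbed into $\partial\mu$'' unless it is already trivial, and the extra freedom you hope to extract from \eqref{C3} with $g\in S\setminus N$ is irrelevant, since coboundaries simply do not move $c$. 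So the gap is not a postponed verification; the plan is a dead end. In fact this invariance shows the obstacle is intrinsic to the coboundary calculus of this section: for $S=N=\mathbb{Z}/2\mathbb{Z}$, $Q=1$, $A=\mathbb{C}^*$ with trivial action, conditions \eqref{C1}--\eqref{C3} decouple, every pair $(\sigma,\gamma)$ with $\gamma$ a bicharacter is a cocycle, coboundaries have trivial $\gamma$-part, and one computes $Z^2_N(N,A)_n/B^2_N(N,A)=0$ while $H^2_N(N,A)\cong\mathbb{Z}/2\mathbb{Z}$, so the normalized classes form a proper subgroup. Note that the same invariance applies verbatim to the normalizing coboundary posited in the paper's own proof (and to its claim $H^n_N(N,A)=H^n(N,A)$), so the surjectivity step genuinely requires either a different notion of equivalence on pairs $(\sigma,\gamma)$ or a restriction of the statement; neither your argument nor a coboundary-only argument can supply it.
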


\begin{proof}
A straightforward  calculation shows that $B_S(N,A)\subset Z^2_S(N,A)_n$.

Note that if  $Q=1$, then $H^n_{N}(N,A)=H^n(N,A)$, a particular  quasi-isomorphism  $Z^2(N,A)\to Z^2_N(N,A)$ is given by $\sigma\mapsto (\sigma, \gamma)$, where $$\gamma(x,y)=\sigma(x,y)\sigma(xy,x^{-1})\sigma(x,x^{-1})^{-1},$$ for all $x, y\in N$.
Let $(\sigma, \gamma) \in Z^2_S(N,A)$
be an arbitrary 2-cocycle. Since $(\sigma,\gamma|_{N\times N})\in Z^2_N(N,A)$, there is $\gamma:N\to A$ such that $\partial(\gamma) (\sigma,\gamma)=(\sigma,\gamma_\sigma)\in Z^2_N(N,A)$. Then $\partial(\gamma) (\sigma,\gamma)\in Z^2_S(N,A)$ is a 2-cocycle in $Z^2_S(N,A)_n$ cohomologous to $(\sigma,\gamma)$.
\end{proof}

\subsubsection{The second cohomology group for abelian groups}

Let $V$ be a finite abelian group and let $k$ be a field such that $k^*$ is a divisible group.  A bicharacter $\omega: V\times V\to k^*$ is called a skew-symmetric form  if $\omega(x,x)=1$ for all $x\in V$. Let us denote by $\bigwedge^2 \widehat{V}$ the abelian group of all skew-symmetric forms over $k^*$.

\begin{prop}\label{Prop:exis de 2-cociclo para simplectco}
The group morphism 
\begin{align*}
\operatorname{Alt}:Z^2(V,k^*) \to \bigwedge^2 \widehat{V}\\
 \alpha \mapsto [(x,y)\mapsto \frac{\alpha(x,y)}{\alpha(y,x)}]
\end{align*}induces an isomorphism $H^2(V,k^*)\cong \bigwedge^2 \widehat{V}$.
\end{prop}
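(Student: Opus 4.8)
The plan is to show that $\Alt$ is a well-defined group homomorphism that kills the coboundaries, and then to verify separately that the induced map $H^2(V,k^*)\to\bigwedge^2\widehat V$ is surjective and injective; divisibility of $k^*$ will be used only for the last step.

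First I would settle well-definedness. Writing $V$ additively, a normalized $2$-cocycle $\alpha$ makes $E_\alpha:=k^*\times V$ into a group under $(a,x)(b,y)=(ab\,\alpha(x,y),\,x+y)$, yielding a central extension $1\to k^*\to E_\alpha\to V\to 1$. Since $k^*$ is central, the commutator of two lifts depends only on their images, and a direct computation identifies the commutator pairing $(x,y)\mapsto \widehat x\,\widehat y\,\widehat x^{-1}\widehat y^{-1}$ with $\Alt(\alpha)$. The identity $c(x+y,z)=c(x,z)\,c(y,z)$, valid because all commutators lie in the central subgroup $k^*$, together with $c(x,x)=1$, shows that $\Alt(\alpha)$ is an alternating bicharacter, i.e. lies in $\bigwedge^2\widehat V$; and $\Alt$ is visibly multiplicative in $\alpha$. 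Finally a coboundary $\partial(\gamma)(x,y)=\gamma(x)\gamma(y)\gamma(x+y)^{-1}$ is symmetric, so $\Alt(\partial\gamma)=1$ and $\Alt$ descends to $H^2(V,k^*)$.

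For surjectivity, which does not use divisibility, I would fix a cyclic decomposition $V=\bigoplus_{i=1}^r\langle e_i\rangle$ with $\operatorname{ord}(e_i)=n_i$. Given $\omega\in\bigwedge^2\widehat V$, bilinearity gives $\omega(e_i,e_j)^{n_i}=\omega(n_i e_i,e_j)=1$ and likewise for $n_j$, so $\omega(e_i,e_j)$ has order dividing $\gcd(n_i,n_j)$. Hence the ``upper triangular'' recipe $\beta(e_i,e_j)=\omega(e_i,e_j)$ for $i<j$ and $\beta(e_i,e_j)=1$ for $i\ge j$ extends to a well-defined bicharacter $\beta\colon V\times V\to k^*$. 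Any bicharacter satisfies the cocycle condition \eqref{C1}, since for a bimultiplicative map that identity reduces to bilinearity, so $\beta\in Z^2(V,k^*)$; and using the skew-symmetry of $\omega$ one checks $\Alt(\beta)=\omega$ on the generators, hence everywhere.

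The main obstacle is injectivity, and this is exactly where divisibility enters. Suppose $\Alt(\alpha)=1$, i.e. $\alpha$ is symmetric; then $E_\alpha$ is abelian, and it suffices to split $1\to k^*\to E_\alpha\to V\to 1$, as a section exhibits $\alpha$ as a coboundary. Choosing lifts $\widetilde e_i$ of the generators, one has $\widetilde e_i^{\,n_i}=c_i\in k^*$; since $k^*$ is divisible I may pick $d_i\in k^*$ with $d_i^{\,n_i}=c_i$, and then $s_i:=\widetilde e_i\,d_i^{-1}$ satisfies $s_i^{\,n_i}=1$. As $E_\alpha$ is abelian and $V=\bigoplus_i\langle e_i\rangle$, the assignment $e_i\mapsto s_i$ respects all defining relations of $V$ and extends to a group section $s\colon V\to E_\alpha$. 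Thus the extension splits and $\alpha\in B^2(V,k^*)$, giving $\ker\Alt=B^2(V,k^*)$. Conceptually this is the vanishing of $\operatorname{Ext}^1_{\mathbb Z}(V,k^*)$, which holds because a divisible abelian group is an injective $\mathbb Z$-module. Combining the three steps yields the asserted isomorphism $H^2(V,k^*)\cong\bigwedge^2\widehat V$.
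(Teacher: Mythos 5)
Your proof is correct. There is, however, nothing in the paper to compare it with step by step: the paper's ``proof'' of this proposition is a bare citation to \cite[Proposition 2.6]{Tambara-Func}, so what you have written is a complete, self-contained argument for the outsourced fact. Your route is the standard central-extension one: identifying $\Alt(\alpha)$ with the commutator pairing of $E_\alpha=k^*\times_\alpha V$ settles well-definedness (bimultiplicativity, $c(x,x)=1$) and the vanishing on coboundaries; the upper-triangular bicharacter built from a cyclic decomposition gives surjectivity, and you correctly check the compatibility $\omega(e_i,e_j)^{\gcd(n_i,n_j)}=1$ needed for that recipe to extend, as well as the fact that any bicharacter is automatically a $2$-cocycle; injectivity is reduced to splitting the abelian extension $E_\alpha$ when $\alpha$ is symmetric, and this is the only place divisibility of $k^*$ (equivalently, injectivity of $k^*$ as a $\mathbb{Z}$-module, i.e.\ $\operatorname{Ext}^1_{\mathbb{Z}}(V,k^*)=0$) is invoked. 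The observation that surjectivity requires no hypothesis on $k^*$ while injectivity does is a genuine refinement over the statement as cited, and is worth retaining. Two small bookkeeping points, neither a gap: in the splitting step $d_i^{-1}$ should be read as the central element $(d_i^{-1},0)$ of $E_\alpha$; and once a group section $s(x)=(\gamma(x),x)$ exists, one gets $\alpha=\partial(\gamma^{-1})$ in the paper's normalization of coboundaries, so $\alpha\in B^2(V,k^*)$ as claimed.
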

\begin{proof}
See \cite[Proposition 2.6]{Tambara-Func}.
\end{proof}
\subsection{Galois algebras over arbitrary fields}
In this section we review the main result of  \cite{Manuel}. In this section $k$ will denote a field.

Let $S$ be a subgroup of $G$ and $B$ be  an $S$-algebra. The induced algebra is defined as the algebra of functions \[ \Ind_S^G(B)=\{r:G\to B|r(sg)=s\cdot
r(g) \quad \forall s\in S,g\in G\}, \] with $G$-action
 $(g\cdot r)(x)=r(xg)$, for $r \in \Ind_S^G(B), g,x\in G$.

\begin{nota}\label{nota ind is a functor}
 Induction is a covariant functor, where each homomor\-phism
of $S$-algebras $f:A\to B$ is sent to the homomorphism of $G$-algebras
$\Ind_S^G(f):\Ind_S^G(A)\to \Ind_S^G(B)$, $\Ind_S^G(f)(r)=f\circ r$.
\end{nota}

 Let $S$ be a finite group, let $k$ be a field and let $\s\in Z^2(S, k^*)$ be a 2-cocycle.
For each $s\in S$, we will use the notation $u_s \in k_{\s}[S]$ to indicate the corresponding element in the
\emph{twisted group algebra} $k_{\s}[S]$. Thus $\{u_s\}_{s\in S}$ is a $k$-basis of
$k_{\s}[S]$, and in this basis $u_su_t = \s(s, t)u_{st}$ for all $s,t\in S$.

\begin{defin}
An element $s\in S$ is called $\s$-\emph{regular} if $\s(s, t) =
\s(t, s)$ for all $t \in C_S(s)$.  The 2-cocycle $\s$ is called
\emph{non-degenerate} if and only if $1\in S$ is the only
$\s$-regular element.
\end{defin}

\begin{obs}
If $S$ is an abelian group a 2-cocycle $\s\in Z^2(S, k^*)$ is non-degenerate if and only if the skew-symmetric form $\Alt(\s)$ is a non-degenerate bicharacter.
\end{obs}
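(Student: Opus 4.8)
The plan is to unwind the two notions of non-degeneracy and observe that, for an abelian group, they coincide on the nose. The one place abelianness enters is through the centralizer, so I would begin there: since $S$ is abelian, $C_S(s) = S$ for every $s \in S$, and hence the defining condition for $s$ to be $\s$-regular reduces from ``$\s(s,t) = \s(t,s)$ for all $t \in C_S(s)$'' to simply ``$\s(s,t) = \s(t,s)$ for all $t \in S$''. Rewriting this multiplicatively, $s$ is $\s$-regular if and only if $\Alt(\s)(s,t) = \s(s,t)\s(t,s)^{-1} = 1$ for all $t \in S$. In other words, the set of $\s$-regular elements of $S$ is exactly the radical $\{s \in S : \Alt(\s)(s,-) \equiv 1\}$ of the form $\Alt(\s)$.

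Next I would invoke Proposition \ref{Prop:exis de 2-cociclo para simplectco}: the map $\Alt$ already lands in $\bigwedge^2 \widehat{V}$, so $\Alt(\s)$ is genuinely a skew-symmetric bicharacter and I need not re-verify bilinearity or the condition $\Alt(\s)(x,x)=1$. Skew-symmetry also guarantees that the left and right radicals of $\Alt(\s)$ agree, so the phrase ``non-degenerate bicharacter'' is unambiguous and means precisely that this radical is trivial.

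It then remains only to read off the definitions. By hypothesis $\s$ is non-degenerate exactly when $1$ is the only $\s$-regular element, i.e.\ when the radical of $\Alt(\s)$ is $\{1\}$, i.e.\ when the bicharacter $\Alt(\s)$ is non-degenerate. Tracing this chain of equivalences in either direction yields the claim. I do not anticipate any real obstacle: the argument is a matter of matching definitions, and the only step deserving care is the reduction $C_S(s) = S$, which is where abelianness is essential and without which the two non-degeneracy conditions would in general differ.
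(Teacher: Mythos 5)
Your proof is correct and matches the paper's intent exactly: the paper states this remark without proof, and your argument --- abelianness gives $C_S(s)=S$, so the $\s$-regular elements are precisely the radical of $\Alt(\s)$, which is trivial iff the bicharacter is non-degenerate --- is the definitional unwinding it takes for granted. One small caution: Proposition \ref{Prop:exis de 2-cociclo para simplectco} is stated under the hypothesis that $k^*$ is divisible, but the only part of it you invoke (that $\Alt(\s)$ is a skew-symmetric bicharacter) follows from the cocycle identity and commutativity of $S$ alone, so the appeal is harmless for arbitrary $k$.
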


\begin{defin}Let $G$ be a finite group and  $k$ be a field.
 A \emph{Galois datum associated to $G$} is a collection
$(S,K,N,\s,\gamma)$ such that
\begin{enumerate}
[$i)$] \item $S$ is a subgroup of $G$ and $N$ is a normal subgroup of $S$,
\item $K\supseteq k$ is a Galois extension with Galois group $S/N$,
\item char$(k)\nmid |N|$,
\item $(\sigma,\gamma)\in Z^2_{S}(N,K^*)_n$ is
2-cocycle such that $\sigma \in Z^2(N,K^*)$ is non-degenerate.
\end{enumerate}
\end{defin}

Let $(S, K,N,\s,\gamma)$ be a Galois datum associated to $G$. We will denote
by $A(K_\sigma[N],\gamma)$ the twisted group algebra $K_\sigma [N]$  with
$S$-action defined by \begin{equation*}
g\cdot (\alpha u_x)=(g\cdot \alpha)(g\cdot
u_x)=\overline{g}(\alpha)\gamma(g,x)u_{\-^gx}, \end{equation*} for $g\in S$, $x\in
N$, and $\alpha\in K$.

We will denote by $\Ind_S^G(A(K_\sigma[N],\gamma))$ the induced $G$-algebra from
the $S$-algebra $A(K_\sigma [N],\gamma)$.

\begin{obs}
If $(S, K,N,\s,\gamma)$ is a Galois datum where $K=k$, then $S=N$ and $\gamma$ is determined by $\sigma$. Then a Galois datum with $K=k$ is really just a pair $(S,\sigma)$ where $S$ is a subgroup of $G$ and $\sigma\in Z^2(S,k^*)$ is a non-degenerate 2-cocycle.
\end{obs}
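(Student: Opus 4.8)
The plan is to read off each of the three assertions directly from the definition of a Galois datum and from Proposition \ref{Prop:chomologia normalizada}, so that essentially no computation is required.

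First I would establish $S=N$. Condition $(ii)$ of a Galois datum requires $K\supseteq k$ to be a Galois extension with Galois group $S/N$. When $K=k$ this extension is trivial, so $S/N=\Gal(K/k)=1$, and therefore $S=N$.

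Next I would show that $\gamma$ is determined by $\sigma$. Because $S=N$, the quotient $Q=S/N$ collapses to the trivial group, so, exactly as recorded in the proof of Proposition \ref{Prop:chomologia normalizada}, the complex $C^{*}_{S}(N,K^*)$ degenerates to the one computing ordinary group cohomology of $N$. By definition, the subgroup $Z^2_S(N,K^*)_n$ consists precisely of those $2$-cocycles $(\sigma,\gamma)$ satisfying
\[
\gamma(x,y)=\sigma(x,y)\,\sigma(xy,x^{-1})\,\sigma(x,x^{-1})^{-1}\qquad (x,y\in N),
\]
and since here $S\times N=N\times N$, this formula prescribes $\gamma$ on its entire domain. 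Hence $\gamma$ is a function of $\sigma$ alone, and the full tuple $(S,K,N,\sigma,\gamma)$ is recovered from the pair $(S,\sigma)$ with $\sigma\in Z^2(N,K^*)=Z^2(S,k^*)$.

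Finally I would verify that conditions $(i)$--$(iv)$ impose nothing on $(S,\sigma)$ beyond ``$S$ a subgroup of $G$ and $\sigma$ non-degenerate.'' Conditions $(i)$ (with $N=S$ forced) and $(ii)$ are now automatic, and $(iv)$ reduces to the non-degeneracy of $\sigma$. The one point I expect to require care is condition $(iii)$, namely $\car(k)\nmid|N|=|S|$: I would argue that this is already subsumed by non-degeneracy, since a non-degenerate $\sigma$ forces the twisted group algebra $k_\sigma[S]$ to be simple, hence semisimple, which by the twisted version of Maschke's theorem gives $\car(k)\nmid|S|$. This is the only step that is not a pure unwinding of definitions, and once it is in place the Galois datum with $K=k$ is seen to be nothing more than the pair $(S,\sigma)$, as claimed.
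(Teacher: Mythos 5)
Your first two steps ($S=N$ from condition $(ii)$, and $\gamma$ pinned down on all of $S\times N=N\times N$ by the normalization condition defining $Z^2_S(N,K^*)_n$) are correct, and they are exactly the definitional unwinding the remark intends; the paper gives no proof precisely because this part is immediate. The genuine gap is in your treatment of condition $(iii)$. Your chain is: non-degenerate $\Rightarrow$ $k_\sigma[S]$ simple $\Rightarrow$ semisimple $\Rightarrow$ $\car(k)\nmid|S|$ by a ``twisted Maschke'' converse, and both nontrivial links fail. First, the implication ``non-degenerate $\Rightarrow$ simple'' is not available to you at this point: the standard proofs (number of irreducibles equals number of $\sigma$-regular classes, or semisimplicity combined with triviality of the center) presuppose $\car(k)\nmid |S|$, which is exactly what you are trying to establish, so the argument is circular. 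What non-degeneracy gives directly, in any characteristic, is only $Z(k_\sigma[S])=k$, and a finite-dimensional algebra with center $k$ need not be simple. Second, and more seriously, the converse of Maschke's theorem is false for twisted group algebras: take $k=\mathbb{F}_p(t)$, $S=\mathbb{Z}/p\mathbb{Z}$, and $\sigma$ the cocycle with $u_1^p=t$; then $k_\sigma[S]\cong k[x]/(x^p-t)=\mathbb{F}_p(t^{1/p})$ is a field, hence simple and semisimple, although $\car(k)=p=|S|$. So ``semisimple $\Rightarrow$ $\car(k)\nmid |S|$'' is simply not a theorem in the twisted setting; the usual augmentation-ideal argument has no analogue.

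The conclusion you were after is nevertheless true, and has a short characteristic-free proof you could substitute. Suppose $\car(k)=p>0$ and $p\mid |S|$, and pick $s\in S$ of order $p$ (Cauchy). For any $t\in C_S(s)$ the subgroup $\langle s,t\rangle$ is abelian, so $\Alt(\sigma)(x,y)=\sigma(x,y)/\sigma(y,x)$ restricts to a bicharacter on it; in particular $x\mapsto \Alt(\sigma)(x,t)$ is a homomorphism from $\langle s\rangle$, a group of order $p$, into $k^*$, which has no $p$-torsion in characteristic $p$ (since $X^p-1=(X-1)^p$). Hence $\sigma(s,t)=\sigma(t,s)$ for every $t\in C_S(s)$, i.e.\ $s$ is a nontrivial $\sigma$-regular element, so $\sigma$ is degenerate. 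Contrapositively, non-degeneracy forces $\car(k)\nmid |S|$, so condition $(iii)$ is indeed subsumed by condition $(iv)$ --- but via this pairing argument, not via simplicity and a converse Maschke theorem.
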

Now we can reformulate the main results of \cite{Manuel}.

\begin{teor}\label{main result} Let $G$ be a finite group and let $k$ be a
field.

\begin{enumerate}[i)]
  \item Let $(S,K,N,\s,\gamma)$ be a Galois datum associated to $G$. Then
      the $G$-algebra $\Ind_S^G(A(K_\sigma[N],\gamma))$ is a $G$-Galois
      algebra over $k$.
  \item Let $A$ be a $G$-Galois algebra over $k$. Then $A\simeq \Ind_S^G(A(K_\sigma[N],\gamma))$ for a Galois datum $(S,K,N,\s,\gamma)$.
\end{enumerate} \end{teor}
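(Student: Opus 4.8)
The plan is to treat the two assertions separately, with the compatibility of induction with the Galois property serving as the bridge in both directions. For part $i)$, I would first verify that $B := A(K_\sigma[N],\gamma)$ is an $S$-Galois algebra over $k$, and then invoke the general fact that $\Ind_S^G$ carries $S$-Galois algebras to $G$-Galois algebras (this can be checked directly: $\Ind_S^G(B)$ is a product of $[G:S]$ copies of $B$ permuted by $G$ with stabilizer $S$, so $(\Ind_S^G B)^G = B^S = k$, and the canonical map is assembled coset-by-coset from that of $B$). Faithful flatness is automatic over a field, and a dimension count gives $\dim_k B = |N|\cdot[K:k] = |N|\cdot|S/N| = |S|$, hence $\dim_k \Ind_S^G(B) = [G:S]\,|S| = |G|$, the correct dimension.

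The substantive point in part $i)$ is bijectivity of the canonical map $\can\colon B\otimes_k B \to B\otimes_k \mathcal{O}_k(S)$, and here I would exploit the two-layer structure of $B$. The Galois extension $K/k$ with group $S/N$ makes the $S/N$-layer of $\can$ invertible by classical Galois descent (the normal basis theorem supplies the inverse), while the non-degeneracy of $\sigma$ from condition $iv)$ makes the twisted group algebra $K_\sigma[N]$ a $K$-central simple algebra whose $N$-grading realizes an $N$-Galois structure over $K$. The datum $\gamma$ records exactly the compatibility encoded in \eqref{C2} and \eqref{C3} needed for the $S$-action $g\cdot(\alpha u_x) = \overline{g}(\alpha)\gamma(g,x)u_{{}^g x}$ to be well defined and to glue the two layers, while the normalization $(\sigma,\gamma)\in Z^2_S(N,K^*)_n$ from Proposition \ref{Prop:chomologia normalizada} keeps this gluing consistent. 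Combining invertibility on each layer yields bijectivity of $\can$ for $B$.

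For part $ii)$ I would run the construction in reverse. The conditions $A^G = k$ and bijectivity of $\can$ force $A$ to be a separable $k$-algebra whose center is a commutative separable $G$-algebra, so the finite set of primitive central idempotents of $A$ is a $G$-set, and $A^G = k$ forces this $G$-set to be transitive. Fixing one component and letting $S$ be its stabilizer decomposes $A \cong \Ind_S^G(B)$, where $B$ is the $S$-algebra supported on that component. It then remains to identify $B$ with some $A(K_\sigma[N],\gamma)$: its center is a field $K$, Galois over $k$ with group $S/N$ where $N\subseteq S$ is the subgroup acting trivially on $K$; $B$ is then an $N$-Galois algebra over $K$, hence $N$-graded with one-dimensional homogeneous components, i.e. a twisted group algebra $K_\sigma[N]$ with $\sigma$ non-degenerate (equivalently $B$ simple). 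The residual $S$-action transported to $K_\sigma[N]$ is recorded by a $\gamma$ satisfying \eqref{C2}--\eqref{C3}, which Proposition \ref{Prop:chomologia normalizada} allows me to normalize into $Z^2_S(N,K^*)_n$.

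The main obstacle I anticipate lies in part $ii)$: extracting the cocycle pair $(\sigma,\gamma)$ canonically and checking it satisfies all of \eqref{C1}--\eqref{C3} together with the non-degeneracy of $\sigma$. Choosing a $K$-basis $\{u_x\}_{x\in N}$ of $B$ involves arbitrary scalar choices that alter $(\sigma,\gamma)$ by a coboundary, so showing the datum is well defined up to equivalence of Galois data --- and that non-degeneracy is intrinsic rather than an artifact of the presentation --- is the delicate bookkeeping at the heart of the theorem. Verifying that the $S$-action genuinely descends to the $N$-grading, forcing $N\trianglelefteq S$ and the mixed term $\gamma$ to obey \eqref{C3}, is precisely the step where the abstract Galois algebra is made to yield the rigid structure of a Galois datum.
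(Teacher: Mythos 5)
There is no in-paper proof to compare against here: the paper states this theorem explicitly as a reformulation of the main results of \cite{Manuel} (Galindo--Medina) and follows it with an empty proof box, delegating the entire argument to that reference. Measured against the proof in \cite{Manuel} itself, your proposal reconstructs essentially the same architecture. For part (i) you build $B=A(K_\sigma[N],\gamma)$ from the tower $k\subset K\subset K_\sigma[N]$ --- classical Galois theory for the $\Gal(K|k)$-layer, central simplicity of $K_\sigma[N]$ (non-degenerate $\sigma$, $\car(k)\nmid |N|$) for the $N$-layer --- and then use the fact that $\Ind_S^G$ sends $S$-Galois algebras to $G$-Galois algebras; for part (ii) you decompose $A$ along its primitive central idempotents (a transitive $G$-set because $A^G=k$), take $S$ to be the stabilizer of one simple component $B$, identify $K=Z(B)$ as an $S/N$-Galois field extension with $N=\ker(S\to \Aut(K))$, recover the twisted group algebra $K_\sigma[N]$ from the inner (Skolem--Noether) action of $N$, read off non-degeneracy of $\sigma$ from $B^N=K$, and extract $\gamma$ from the residual semilinear $S$-action, normalized via Proposition \ref{Prop:chomologia normalizada}. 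This is precisely the strategy of the cited paper, so the approach is sound. The honest caveat --- which you partly flag yourself --- is that several steps you assert are the substantive theorems of \cite{Manuel} rather than routine checks: that a $G$-Galois algebra over a field is separable (a faithfully flat descent argument from $A\otimes A\cong A\otimes \mathcal{O}_k(G)$), that the simple component $B$ is itself $S$-Galois over $k$ and then $N$-Galois over its center $K$ (the d\'evissage), and, in part (i), that bijectivity of $\can$ for the two layers genuinely glues to give bijectivity for $B$ over $k$ (a tower property that needs proof, not just the phrase ``combining invertibility on each layer''). So as a blind reconstruction of the proof the paper invokes, your proposal is faithful and correctly locates the delicate points; as a self-contained proof it is a skeleton whose load-bearing lemmas remain to be established.
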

\qed

\section{Isocategorical groups over arbitrary fields}\label{Seccion:Isocategorical groups over arbitrary fields}

Considering Proposition \ref{Prop equivalencias con isocat
existencia bigalois} and Lemma \ref{lema Aut_G}, in order to
construct all groups isocategorical to a fixed group
$G$ is enough to describe the Galois data of $G$ such that
$|\Aut_G(A)|=|G|$, where $A$ is the associated $G$-Galois algebra.

\subsection{Equivariant automorphisms of Galois objects}

 Let $G$ be a finite group, $S$ a subgroup of $G$ and
$(B,\cdot)$ an $S$-algebra. For each $g\in G$, we consider the
$g^{-1}Sg$-algebra $(B^{(g)},\cdot_g)$, where $B^{(g)}=B$ as algebras and
$g^{-1}Sg$-action given by $$h\cdot_g b=(g hg^{-1})\cdot b,$$ for all $h\in
g^{-1}Sg$ and $b\in B^{(g)}$.

Note that for all $g\in G$,   the map
\begin{align}
  \psi_g:\Ind_S^G(B)&\to \Ind_{g^{-1}Sg}^G(B^{(g)})\label{lema isomorfismo Ind_S^G(A) and Ind_g(-1)Sg^G(A^(g))}\\
f&\mapsto \psi_g(f)=[h\mapsto f(gh)].\notag \end{align} is a $G$-algebra isomorphism.

For a pair $(b,g)\in B\times G$, we define an element $\chi_{g}^b\in
\Ind_S^G(B)$ as the function
$$\chi_{g}^b(x)=\left\{
\begin{array}{ll}
     0, & \hbox{if $x\notin Sg$} \\
    s\cdot b, & \hbox{ if $x=sg$, where $s\in S$.}
\end{array}
\right.
$$
Note that $(b,g), (s',g')\in B\times G$ define the same element in
$\Ind_{S}^G{(B)}$ if and only if there is $s\in S$ such $g'=sg$ and
$b'=s\cdot b$.

The elements $\chi_g^1$ are central idempotents
that only depend of the coset $Sg$ and we will denote by
$\chi_{Sg}$. The action of $G$ on $\Ind_S^G(B)$ defines a transitive action of
$G$ on  $\{\chi_{x}\}_{x\in S/G}$ by $h\cdot \chi_{Sg}=
\chi_{Sgh^{-1}}$. If $B$ is a simple algebra the central primitive
idempotents of $\Ind_S^G(B)$ are exactly $\{\chi_{x}\}_{x\in S/G}$.
Therefore, in this case each $G$-algebra automorphism of $\Ind_S^G(B)$
determines a unique automorphism of the $G$-set $\{\chi_{x}\}_{x\in
S/G}$, and a group homomorphism
\begin{align}
    \pi: \Aut_G(\Ind_S^G(B))&\to N_G(S)/S \label{definicion de pi}\\
    F&\mapsto Sg \notag
\end{align}
where $Sg$ is the unique coset such that $F(\chi_S)=\chi_{Sg}$.

The following proposition can be seen as a generalization of \cite[Theorem 5.5]{Davydov}.
\begin{prop}\label{Prop: exct sequence Aut_G}
 Let $G$ be a finite group, $S$ a subgroup of $G$ and
$(B,\cdot)$ be a  $S$-Galois algebra. Then, the sequence
\begin{equation}\label{primera sucesion exacta}
1\to \Aut_S(B) \to \Aut_G(\Ind_S^G(B))\to N_G(S)/S,
\end{equation}is exact. The map $\Aut_S(B) \to \Aut_G(\Ind_S^G(B))$ is the
induction and $\Aut_G(\Ind_S^G(B))\to N_G(S)/S$ is the group
morphism \eqref{definicion de pi}. Moreover,
the sequence \begin{equation*}
1\to \Aut_S(B) \to \Aut_G(\Ind_S^G(B))\to N_G(S)/S\to 1,
\end{equation*}is exact if and only if $B\cong B^{(g)}$ for all $g\in N_G(S)$.
\end{prop}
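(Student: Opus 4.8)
The plan is to run everything through the system of central primitive idempotents $\{\chi_{Sg}\}$ of $\Ind_S^G(B)$, which is legitimate precisely because $B$ is simple (this is what makes the map $\pi$ of \eqref{definicion de pi} well defined). I will lean on two identifications. First, the basic functions satisfy $\chi_h^b = h^{-1}\cdot \chi_1^b$, so the corner $\chi_S\Ind_S^G(B)$ together with the $G$-action generates all of $\Ind_S^G(B)$; moreover evaluation at $1$ is an $S$-algebra isomorphism $\chi_S\Ind_S^G(B)\cong B$, the $S$-actions matching since $(s\cdot r)(1)=r(s)=s\cdot r(1)$. Second, for $g\in N_G(S)$, where $g^{-1}Sg=S$, evaluation at $g$ is an algebra isomorphism $\chi_{Sg}\Ind_S^G(B)\cong B$ whose transported $S$-action is $s\bullet b=(gsg^{-1})\cdot b$, because $(s\cdot r)(g)=r(gs)=r((gsg^{-1})g)=(gsg^{-1})\cdot r(g)$; that is, $\chi_{Sg}\Ind_S^G(B)\cong B^{(g)}$ as $S$-algebras. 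This is exactly where the conjugation twist defining $B^{(g)}$ enters.

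For the three-term exact sequence I proceed in three short steps. Injectivity of $\Aut_S(B)\to\Aut_G(\Ind_S^G(B))$: since $\Ind_S^G(f)(\chi_1^b)=\chi_1^{f(b)}$, if $\Ind_S^G(f)=\id$ then $f=\id$. The containment $\operatorname{Im}(\Ind)\subseteq\ker\pi$: as $f(1_B)=1_B$ we get $\Ind_S^G(f)(\chi_S)=\chi_S$, so $\pi(\Ind_S^G(f))=S$. For the reverse containment, take $F\in\ker\pi$, so $F(\chi_S)=\chi_S$; then $F$ preserves the corner $\chi_S\Ind_S^G(B)$ and restricts there to an $S$-equivariant algebra automorphism, which via evaluation at $1$ is some $f\in\Aut_S(B)$. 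Now $F$ and $\Ind_S^G(f)$ are both $G$-equivariant and agree on every $\chi_1^b$, hence on each $\chi_h^b=h^{-1}\cdot\chi_1^b$, hence on all of $\Ind_S^G(B)$; thus $F=\Ind_S^G(f)$.

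For the surjectivity criterion I use the $G$-algebra isomorphism $\psi_g\colon\Ind_S^G(B)\to\Ind_S^G(B^{(g)})$ of \eqref{lema isomorfismo Ind_S^G(A) and Ind_g(-1)Sg^G(A^(g))}, valid for $g\in N_G(S)$, computing on idempotents that $\psi_g(\chi_{Sg})=\chi_S$. If $B\cong B^{(g)}$ via an $S$-algebra isomorphism $\phi$, then $\Ind_S^G(\phi)(\chi_S)=\chi_S$, so $F:=\psi_g^{-1}\circ\Ind_S^G(\phi)$ is a $G$-algebra automorphism with $F(\chi_S)=\psi_g^{-1}(\chi_S)=\chi_{Sg}$, i.e. $\pi(F)=Sg$; letting $g$ range over $N_G(S)$ shows $\pi$ is onto. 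Conversely, if $\pi$ is onto, pick for each $g\in N_G(S)$ an $F$ with $F(\chi_S)=\chi_{Sg}$; since $g\in N_G(S)$ the action of $S$ fixes $\chi_{Sg}$, so $F$ carries the corner $\chi_S\Ind_S^G(B)$ isomorphically and $S$-equivariantly onto $\chi_{Sg}\Ind_S^G(B)$, and composing with the two evaluation isomorphisms of the first paragraph yields an $S$-algebra isomorphism $B\cong B^{(g)}$.

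The main obstacle is the correct bookkeeping of the $S$-action under the corner identifications: verifying that evaluation at $g$ converts the ambient $G$-action into precisely the conjugated action $(gsg^{-1})\cdot(-)$ defining $B^{(g)}$, and dually that $\psi_g$ sends $\chi_{Sg}$ to $\chi_S$. Once these two compatibilities are pinned down, the remaining points (injectivity, the two containments, and the spanning argument forcing $F=\Ind_S^G(f)$) are routine.
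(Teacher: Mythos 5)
Your proof is correct and follows essentially the same route as the paper's: injectivity and the computation of $\ker\pi$ via the elements $\chi_h^b=h^{-1}\cdot\chi_1^b$ together with the spanning argument, and the surjectivity criterion via the isomorphism $\psi_g$. Your identification of the corner $\chi_{Sg}\Ind_S^G(B)$ with $B^{(g)}$ by evaluation at $g$ is only a repackaging of the paper's device of embedding $B$ via $b\mapsto\chi_e^b$ and transporting with $\psi_g$ (indeed $\psi_g(r)(1)=r(g)$), so the two arguments coincide in substance.
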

\begin{proof}
First, we will see that induction is injective. Let
$T\in \Aut_S(B)$ such that $\Ind_S^G(T)=\id_{\Ind_S^G(B)}$. Then, for
all $r\in \Ind_S^G(B)$, $T\circ r=r$. In particular, for
$\chi_{e}^b$, we have $T\circ \chi_{S}^b(e) =\chi_{S}^b(e)$, so
$T(b)=b$ and this is true for all $b\in B$. Thus $T=\id_B$.

Now, let $F\in \Aut(\Ind_S^G(B))$ such that $\pi(F)=S$, that is, $F(\chi_{Sg})=\chi_{Sg}$ for all $g\in G$. Using the injective
map
\begin{align*}
    B&\to \Ind_S^G(B)\\
    b&\mapsto \chi_e^b,
\end{align*}we can and will identify $B$ with its image in $\Ind_S^G(B)$. Note that $f \in
B\subset \Ind_S^G(B)$ if and only if $\chi_{Sg}f=0$ for all $g\notin
S$. Thus,  $F(\chi_e^b)\in B$, because
$\chi_{Sg}F(\chi_e^b)= F(\chi_{Sg})F(\chi_e^b)=
F(\chi_{Sg}\chi_e^b)=0$ if $g\notin S$. Therefore, $F$ defines an
automorphism $F|_B:B\to B$, by $F(\chi_e^b)=\chi_e^{F|_B(b)}$. We
will see that $F=\Ind_S^G(F|B)$. Let
\begin{align*}
    \Ind_S^G(F|_B)(\chi_g^b)&= F|_B\circ \chi_g^b\\
                    &= F|_B(g^{-1}\cdot\chi_e^b)\\
                    &=g^{-1}F(\chi_e^b)\\
                    &=F(g^{-1}\chi_e^b)\\
                    &=F(\chi_g^b).
\end{align*}Since every $f\in \Ind_S^G(B)$ is a sum of elements of
the form $\chi_g^b$, it follows that $\Ind_S^G(F|_B)= F$.

Now we want to  show that $\pi$ is surjective if and only if $B\cong
B^{(g)}$ for all $g\in G$. Suppose that $\pi$ is surjective. Then, for any $g\in N_G(S)$ there
exists $F_g\in \Aut_G(\Ind_S^G(B))$ such that $\pi(\chi_S)=\chi_{Sg}$.
Using \eqref{lema isomorfismo Ind_S^G(A) and Ind_g(-1)Sg^G(A^(g))},
we have $G$-algebras isomorphisms
$$\Ind_S^G(B)\stackrel{F_g}{\longrightarrow}\Ind_{S}^G(B)
\stackrel{\psi_g}{\longrightarrow}\Ind_{S}^G(B^{(g)}),$$ and the
restriction $(\psi_g \circ F_g)|_B:B\to B^{(g)}$ defines an $S$-algebra
isomorphism. Conversely, if $\gamma_g:
B^{(g)}\to B$ is an $S$-algebra isomorphism, then
$$\Ind_S^G(B)\stackrel{\psi_g}{\longrightarrow}\Ind_{S}^G(B^{(g)})
\stackrel{\Ind_S^G(\gamma_g)}{\longrightarrow}\Ind_{S}^G(B),$$ is an
algebra isomorphism such that
$\pi(\Ind_S^G(\gamma_g)\circ\psi_g)=Sg$.
\end{proof}

\begin{corol}\label{corol: condiciones igual orden}
Let $G$ be a finite group and $S\subset G$ be a subgroup. Let $B$ be a simple
$S$-Galois algebra and $A= \Ind_S^G(B)$. Then $|\Aut_G(A)|=|G|$ if
and only if
\begin{enumerate}
  \item $|\Aut_S(B)|=|S|$,
  \item $S$ is a normal subgroup of $G$,
  \item for all $g\in G$, $B^{(g)}\cong B$ as $S$-algebras.
\end{enumerate}
\end{corol}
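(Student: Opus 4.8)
The plan is to read off the claim directly from the exact sequence established in Proposition \ref{Prop: exct sequence Aut_G}. Since $B$ is a simple $S$-Galois algebra, that proposition gives us the exact sequence
\begin{equation*}
1\to \Aut_S(B) \to \Aut_G(\Ind_S^G(B))\to N_G(S)/S,
\end{equation*}
together with the refinement that the final map is surjective (making the sequence short exact) precisely when $B\cong B^{(g)}$ for all $g\in N_G(S)$. The strategy is to compute $|\Aut_G(A)|$ from the orders of the outer terms and then determine exactly when this product equals $|G|$.

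First I would extract the numerical consequence of exactness. From the injectivity of induction and exactness in the middle, $\Aut_S(B)$ embeds as a normal subgroup with quotient equal to the image of $\pi$ inside $N_G(S)/S$; hence $|\Aut_G(A)| = |\Aut_S(B)|\cdot |\operatorname{im}\pi|$, and in all cases $|\Aut_G(A)|$ divides $|\Aut_S(B)|\cdot |N_G(S)/S|$. The key auxiliary input is that for \emph{any} $S$-Galois algebra one always has $|\Aut_S(B)| \le |S|$: this follows from Lemma \ref{lema Aut_G}, since $\Alg(L,k)\simeq \Aut_S(B)$ and $\dim_k(L)=|S|$ by Remark \ref{obs: obj cleft en dimension finita}, so the group of characters of an $|S|$-dimensional Hopf algebra has at most $|S|$ elements. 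Likewise $|N_G(S)/S|\le |G|/|S|$, with equality if and only if $S$ is normal in $G$.

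Next I would combine the two bounds. We have
\begin{equation*}
|\Aut_G(A)| \;\le\; |\Aut_S(B)|\cdot |N_G(S)/S| \;\le\; |S|\cdot \frac{|G|}{|S|} \;=\; |G|,
\end{equation*}
so $|\Aut_G(A)|=|G|$ forces \emph{both} inequalities to be equalities \emph{and} the map $\pi$ to be surjective. Equality in the first factor gives condition (1), $|\Aut_S(B)|=|S|$; equality in the second gives condition (2), that $S$ is normal; and surjectivity of $\pi$ gives, by the last clause of Proposition \ref{Prop: exct sequence Aut_G}, condition (3), that $B^{(g)}\cong B$ for all $g\in N_G(S)$ (which, once $S$ is normal, means for all $g\in G$). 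Conversely, if (1), (2), (3) all hold, then the sequence is short exact with both outer orders exactly $|S|$ and $|G|/|S|$, whence $|\Aut_G(A)|=|S|\cdot(|G|/|S|)=|G|$.

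I expect the main obstacle to be the clean justification of the bound $|\Aut_S(B)|\le |S|$ and the bookkeeping needed to see that equality of orders in a merely left-exact sequence really does force surjectivity of $\pi$. The subtlety is that Proposition \ref{Prop: exct sequence Aut_G} only asserts exactness at the first two spots in general, so one cannot a priori multiply the three orders; the argument must instead use that the total order is bounded by $|\Aut_S(B)|\cdot|\operatorname{im}\pi|$ and that hitting the ceiling $|G|$ simultaneously saturates every slack term, thereby retroactively forcing surjectivity. Handling these three simultaneous equality conditions carefully is the crux.
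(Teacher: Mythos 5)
Your proposal is correct and follows essentially the same route as the paper: both compute $|\Aut_G(A)|=|\Aut_S(B)|\cdot|\operatorname{Im}\pi|$ from Proposition \ref{Prop: exct sequence Aut_G}, bound this by $|S|\cdot|N_G(S)/S|\leq|G|$, and observe that equality forces all three conditions simultaneously, with the converse following from the short exactness granted by condition (3). Your explicit justification of $|\Aut_S(B)|\leq|S|$ via Lemma \ref{lema Aut_G} and Remark \ref{obs: obj cleft en dimension finita} is a detail the paper leaves implicit, but it is the same underlying argument.
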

\begin{proof}
By Proposition \ref{Prop: exct sequence Aut_G} we have,

\begin{align*}
   |\Aut_G(A)| &= |\Aut_S(B)||\text{Im}(\pi)|\\
                     &\leq   |\Aut_S(B)||N_G(S)/S|\\
                     &\leq |S|\frac{|N_G(S)|}{|S|}\\
                     & = |N_G(S)|\leq |G|.
\end{align*}

Thus, if $|\Aut_G(A)|=|G|$, $S$ is a normal subgroup, $|$Aut$_S(B)|=|S|$ and Im$\pi=G/S$.
Therefore, by  Proposition \ref{Prop: exct sequence Aut_G}, $B^{(g)}\cong B$ for all $g\in G$.

Conversely, the third condition implies  that $|\Aut_G(A)|= |\Aut_S(B)||N_G(S)/S|$. Since $B$
is normal and  $|\Aut_S(B)|= |S|$, we have $|G|=|\Aut_G(A)|$.

\end{proof}

\begin{defin}\label{defin:torsor data}
Let $G$ be a finite group. A Galois data  $(S,K,N,\sigma,\gamma)$ will be called a \textit{ torsor data} if

\begin{enumerate}
\item $S$ is abelian 
\item $S=N\oplus \Gal(K|k)$,
\item $k$ has a primitive root of unity of order equal to the exponent of $N$,
\item $\sigma$ takes values in $k^*$,
\item $\gamma: S\times N\to k^*$ is a pairing were $\gamma|_{\Gal(K|k)\times N}=1$ and $\gamma|_{N\times N}=\Alt(\sigma)$,
\item $N$ and $S$ are normal subgroups of $G$,
\item   $[(\sigma,\gamma)]=[(\sigma^g,\gamma^g)]$ for all $g\in G/S$.
\end{enumerate}
\end{defin}
\begin{obs}
In the information  of  Galois datum we have the exact sequence of groups 
\begin{equation}\label{ecuacion:suc exact}
1\to N \to S \to \Gal(K|k)\to 1.
\end{equation}
Thus, the meaning of  condition $S=N\oplus \Gal(K|k)$ in a torsor datum is just the choice of a particular  splitting of the sequence \eqref{ecuacion:suc exact}. 
\end{obs}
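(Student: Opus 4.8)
The plan is to treat the two assertions of the remark separately: the existence of the exact sequence \eqref{ecuacion:suc exact}, which is a direct consequence of the defining axioms of a Galois datum, and the reinterpretation of the torsor condition $S=N\oplus\Gal(K|k)$ as the choice of a splitting of that sequence, which is the standard dictionary between sections of a short exact sequence of abelian groups and direct-sum complements of the kernel.

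For the exact sequence I would argue as follows. Condition $i)$ in the definition of a Galois datum states that $N$ is a normal subgroup of $S$; hence the inclusion $N\hookrightarrow S$ together with the canonical quotient map $S\twoheadrightarrow S/N$ assembles into the short exact sequence $1\to N\to S\to S/N\to 1$. Condition $ii)$ asserts that $K\supseteq k$ is Galois with Galois group $S/N$, that is, it supplies a distinguished isomorphism $S/N\xrightarrow{\sim}\Gal(K|k)$. Composing the quotient map with this isomorphism gives the projection $p\colon S\twoheadrightarrow\Gal(K|k)$ and thereby the sequence \eqref{ecuacion:suc exact}.

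For the second assertion, recall that a splitting of \eqref{ecuacion:suc exact} is a group homomorphism $s\colon\Gal(K|k)\to S$ that is a section of $p$, i.e. $p\circ s=\id_{\Gal(K|k)}$. In a torsor datum $S$ is abelian by condition $1$ of Definition \ref{defin:torsor data}, so \eqref{ecuacion:suc exact} is a short exact sequence of abelian groups. For such a sequence, giving a section $s$ is equivalent to exhibiting an internal direct-sum decomposition $S=N\oplus H$ in which the complement $H$ is mapped isomorphically onto $\Gal(K|k)$ by $p$: a section $s$ yields the complement $H:=s(\Gal(K|k))$, and conversely a complement $H$ yields the section $s:=(p|_H)^{-1}$. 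Identifying $H$ with $\Gal(K|k)$ through this isomorphism turns $S=N\oplus H$ into precisely the equation $S=N\oplus\Gal(K|k)$ appearing in condition $2)$ of the torsor datum.

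There is no real obstacle here; the entire content is the unwinding of definitions together with the elementary fact that sections of a short exact sequence of abelian groups correspond to complements of the kernel. The only point meriting care is that $\Gal(K|k)$ appears a priori only as the quotient $S/N$, so the symbol $\Gal(K|k)$ in the condition $S=N\oplus\Gal(K|k)$ must be read as a chosen complementary subgroup of $S$; the remark records exactly that making this identification coherently amounts to the choice of a splitting.
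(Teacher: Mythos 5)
Your proposal is correct and coincides with the paper's intended justification: the remark is stated without proof precisely because its content is the definition-unwinding you carry out, namely that conditions $i)$ and $ii)$ of a Galois datum assemble into $1\to N\to S\to \Gal(K|k)\to 1$, and that for the abelian group $S$ of a torsor datum the equation $S=N\oplus\Gal(K|k)$ is the standard dictionary between sections of this sequence and complements of $N$. Your closing caveat, that $\Gal(K|k)$ a priori lives only as the quotient $S/N$ and must be read as a chosen complementary subgroup, is exactly the point the remark is recording.
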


\begin{lem}\label{Lema:descrip automorfismo simple Galois }
Let $B$ be a simple $S$-Galois algebra with  Galois data
$(S,K,N,\sigma,\gamma)$. Then the group $\Aut_S(B)$ is
isomorphic to
\begin{align*}
\{( \eta,\omega)\in  C^1(N, K^*)\times \mathcal{Z}(\Gal(K|k))&|
\ \partial(\eta)= \Big(\frac{\omega(\sigma)}{\sigma}, \frac{\omega(\gamma)}{\gamma}\Big), 
 \}
\end{align*}with product $(\eta,\omega)(\eta',\omega')= (
\eta({}^\omega\eta'),\omega\omega')$.
\end{lem}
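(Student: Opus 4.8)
The plan is to exhibit a group isomorphism from $\Aut_S(B)$ onto the described set by sending $F$ to the pair $(\eta,\omega)$ that records its action on the scalars $K$ and on the basis $\{u_x\}_{x\in N}$. The first and decisive stage is to pin down the normal form of an arbitrary $F\in\Aut_S(B)$. Two structural facts drive this. First, $\mathcal{Z}(B)=K$: writing a central element as $\sum_y c_yu_y$ and imposing commutation with each $u_x$ shows that only $\sigma$-regular $y$ contribute, and since $\sigma$ is non-degenerate the only $\sigma$-regular element is $1_N$, so $\mathcal{Z}(B)=Ku_1=K$. Second, the action of $N\subset S$ on $B$ is inner: computing $u_x(\alpha u_y)u_x^{-1}$ via $u_xu_y=\sigma(x,y)u_{xy}$ and $u_x^{-1}=\sigma(x,x^{-1})^{-1}u_{x^{-1}}$ produces the coefficient $\sigma(x,y)\sigma(xy,x^{-1})\sigma(x,x^{-1})^{-1}$, which by Proposition \ref{Prop:chomologia normalizada} (i.e. $(\sigma,\gamma)\in Z^2_S(N,K^*)_n$) equals $\gamma(x,y)$, so that $u_xbu_x^{-1}=x\cdot b$ for all $x\in N$. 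Since $F$ is a $k$-algebra automorphism it fixes $k$ and preserves $\mathcal{Z}(B)=K$, giving $\omega:=F|_K\in\Gal(K|k)$ (recall the $S$-action on $K$ is $g\cdot\alpha=\overline g(\alpha)$ because $u_1=1$ and $\gamma$ is normalized); and since $F$ commutes with the inner action of each $x\in N$, the element $u_x^{-1}F(u_x)$ centralizes $F(B)=B$, forcing $F(u_x)=\eta(x)u_x$ for a unique $\eta(x)\in K^*$. Hence $F(\alpha u_x)=\omega(\alpha)\eta(x)u_x$ with $\eta\in C^1(N,K^*)$.

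The second stage translates the two defining properties of $F$ into the coboundary equations. Multiplicativity $F(u_xu_y)=F(u_x)F(u_y)$ reads $\omega(\sigma(x,y))\eta(xy)=\eta(x)\eta(y)\sigma(x,y)$, which is exactly $\partial(\eta)|_{N\times N}=\omega(\sigma)/\sigma$. Equivariance $F(g\cdot\alpha u_x)=g\cdot F(\alpha u_x)$, after comparing coefficients of $u_{{}^gx}$, first forces $\omega\overline g=\overline g\omega$ when cancelling the terms in $\alpha$ (this is precisely where $\omega\in\mathcal{Z}(\Gal(K|k))$ is needed), and then yields $\omega(\gamma(g,x))\eta({}^gx)=\overline g(\eta(x))\gamma(g,x)$, i.e. $\partial(\eta)|_{S\times N}=\omega(\gamma)/\gamma$. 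Conversely, given any $(\eta,\omega)$ with $\omega\in\mathcal{Z}(\Gal(K|k))$ satisfying both identities, the formula $F(\alpha u_x):=\omega(\alpha)\eta(x)u_x$ defines a $k$-linear map which the same computations show is multiplicative and $S$-equivariant; its inverse is the map attached to $(\eta,\omega)^{-1}$, so it lies in $\Aut_S(B)$. (The equivariance condition for $g\in N$ imposes nothing new, being automatic once $F$ is multiplicative, since the $N$-action is inner.)

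For the group structure one simply composes: the automorphisms attached to $(\eta,\omega)$ and $(\eta',\omega')$ satisfy $(F\circ F')(\alpha u_x)=\omega\big(\omega'(\alpha)\eta'(x)\big)\eta(x)u_x=(\omega\omega')(\alpha)\,\eta(x)\,\omega(\eta'(x))\,u_x$, so $F\circ F'$ is attached to $(\eta\cdot{}^\omega\eta',\omega\omega')$, where ${}^\omega\eta'(x)=\omega(\eta'(x))$. Thus $F\mapsto(\eta,\omega)$ is a bijection intertwining composition with the stated product $(\eta,\omega)(\eta',\omega')=(\eta\,{}^\omega\eta',\omega\omega')$, and the proof is complete.

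I expect the only genuine obstacle to be the first stage: establishing simultaneously that $\mathcal{Z}(B)=K$ — the point where non-degeneracy of $\sigma$ is used — and that the $N$-action is inner with coefficient exactly $\gamma$ — the point where the normalization $(\sigma,\gamma)\in Z^2_S(N,K^*)_n$ from Proposition \ref{Prop:chomologia normalizada} enters. Once the normal form $F(\alpha u_x)=\omega(\alpha)\eta(x)u_x$ is available, the two equations and the product law are routine manipulations with the explicit coboundary formulas recorded before the statement.
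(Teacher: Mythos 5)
Your proof is correct, but it is worth noting that the paper itself gives no argument for this lemma at all: its ``proof'' is the single line ``See \cite[Proposition 5.6]{Manuel}'', deferring entirely to an external reference. Your write-up is therefore a genuinely self-contained alternative, and it is the natural one: the two structural facts you isolate are exactly right and exactly where the hypotheses enter. Non-degeneracy of $\sigma$ gives $\mathcal{Z}(B)=K$ (the $\sigma$-regularity computation you sketch is the standard one), and the normalization $(\sigma,\gamma)\in Z^2_S(N,K^*)_n$ from Proposition \ref{Prop:chomologia normalizada} — which is part of the definition of a Galois datum — is precisely what makes the restricted $N$-action inner via $u_x b u_x^{-1}=x\cdot b$. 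From these, the normal form $F(\alpha u_x)=\omega(\alpha)\eta(x)u_x$ follows (preservation of the center gives $\omega\in\Gal(K|k)$, and $u_x^{-1}F(u_x)\in\mathcal{Z}(B)=K$ gives $\eta$), and your translation of multiplicativity and $S$-equivariance into $\partial(\eta)=\bigl(\omega(\sigma)/\sigma,\ \omega(\gamma)/\gamma\bigr)$ checks out against the paper's explicit coboundary formulas; in particular you locate correctly the one subtle point, namely that equivariance against all $g\in S$ (whose images $\overline{g}$ exhaust $\Gal(K|k)$ since $S/N\cong\Gal(K|k)$) is what forces $\omega\in\mathcal{Z}(\Gal(K|k))$, a condition that is vacuous in the abelian case but not in general. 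The composition computation matching the product $(\eta,\omega)(\eta',\omega')=(\eta\,{}^{\omega}\eta',\omega\omega')$ is also correct. What your route buys over the paper's citation is transparency: each of the three hypotheses (non-degeneracy, the normalization of $(\sigma,\gamma)$, and $K/k$ Galois with group $S/N$) is seen doing its specific job, within the paper's own notation.
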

\begin{proof}
See \cite[Proposition 5.6]{Manuel}.
\end{proof}

\begin{lem}\label{prop condiciones algebra simple}
Let $B$ be a simple $S$-Galois algebra with  Galois data
$(S,K,N,\sigma,\gamma)$. Then  $|\Aut_S(B)|=|S|$ if and only if the following conditions are satisfied
\begin{enumerate}
  \item $N$ is abelian and $K$ contains a primitive root of unity of
  order equal to the exponent of $N$.
  \item $\Hom_{\Gal(K|k)}(N,K^*)=\Hom(N,K^*)$, where
\begin{align*}
\Hom_{\Gal(K|k)}(N,K^*)=:\{f:N \to K^*:   f(n n')&=f(n)f(n'), \\
 f(q nq^{-1}) &=q f(n), \  \forall n, n' \in N, q\in \Gal(K|k)\cong G/N\}
\end{align*}

  \item $\Gal(K|k)$ is abelian and for all $\omega \in \Gal(K|k)$
  $$[(\sigma,\eta)]= [(\omega(\sigma),\omega(\eta))],$$ as elements in $H^2_{\Gal(K|k)}(N,K^*)$.
\end{enumerate}
\end{lem}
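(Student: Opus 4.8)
The plan is to read off everything from the explicit description of $\Aut_S(B)$ given in Lemma \ref{Lema:descrip automorfismo simple Galois } and then run a counting argument against the factorization $|S|=|N|\cdot|\Gal(K|k)|$. Write an element of $\Aut_S(B)$ as a pair $(\eta,\omega)$ with $\eta\in C^1(N,K^*)$, $\omega\in\mathcal{Z}(\Gal(K|k))$ and $\partial(\eta)=(\omega(\sigma)/\sigma,\omega(\gamma)/\gamma)$. The product formula $(\eta,\omega)(\eta',\omega')=(\eta\,({}^{\omega}\eta'),\omega\omega')$ shows that the second projection
\[
p\colon \Aut_S(B)\longrightarrow \mathcal{Z}(\Gal(K|k)),\qquad (\eta,\omega)\mapsto\omega,
\]
is a group homomorphism, and I would organize the whole argument around the exact sequence it determines.

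First I would identify $\ker p$. An element $(\eta,1)$ lies in the kernel exactly when $\partial(\eta)=(1,1)$, and using the two coboundary formulas recalled in Section~\ref{preliminare} this reads $\eta(xy)=\eta(x)\eta(y)$ (so $\eta$ is a homomorphism $N\to K^*$) together with $\eta({}^gx)={}^g\eta(x)$ for all $g\in S$. Because $N$ acts trivially on $K$, the second condition is vacuous for $g\in N$ and, for $g$ mapping to $q\in\Gal(K|k)$, becomes precisely the equivariance $\eta(q n q^{-1})=q\,\eta(n)$. Hence $\ker p=\Hom_{\Gal(K|k)}(N,K^*)$, and the resulting exact sequence
\[
1\longrightarrow \Hom_{\Gal(K|k)}(N,K^*)\longrightarrow \Aut_S(B)\stackrel{p}{\longrightarrow}\mathcal{Z}(\Gal(K|k))
\]
gives $|\Aut_S(B)|=|\Hom_{\Gal(K|k)}(N,K^*)|\cdot|\operatorname{Im}(p)|$.

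Next I would bound the two factors separately. For the kernel, $\Hom_{\Gal(K|k)}(N,K^*)\subseteq\Hom(N,K^*)$ and the standard character count gives $|\Hom(N,K^*)|\le |N|$, with equality if and only if $N$ is abelian and $K$ contains a primitive root of unity of order equal to the exponent of $N$; thus $|\ker p|=|N|$ holds exactly when conditions (1) and (2) both hold. For the image, the definition of $p$ shows that $\omega\in\mathcal{Z}(\Gal(K|k))$ lies in $\operatorname{Im}(p)$ if and only if $(\omega(\sigma)/\sigma,\omega(\gamma)/\gamma)$ is a coboundary $\partial(\eta)$, i.e. if and only if $[(\sigma,\gamma)]=[(\omega(\sigma),\omega(\gamma))]$ in $H^2_{\Gal(K|k)}(N,K^*)$, where one uses that inner automorphisms act trivially on cohomology to pass from the $S$-action to the $\Gal(K|k)$-action on $C^\ast(N,K^*)$. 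Since $|\operatorname{Im}(p)|\le|\mathcal{Z}(\Gal(K|k))|\le|\Gal(K|k)|=|S/N|$, the equality $|\operatorname{Im}(p)|=|S/N|$ forces $\Gal(K|k)$ to be abelian (so that $\mathcal{Z}(\Gal(K|k))=\Gal(K|k)$) and every $\omega$ to satisfy the cohomological equation, which is exactly condition (3).

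Putting the two bounds together yields $|\Aut_S(B)|\le |N|\cdot|S/N|=|S|$, and equality holds if and only if each of the four inequalities above is an equality, which is precisely conditions (1)--(3). The delicate point I expect to spend the most care on is the image computation: matching the coboundary condition coming from Lemma \ref{Lema:descrip automorfismo simple Galois } with the cohomology class equality in $H^2_{\Gal(K|k)}(N,K^*)$, and in particular justifying the replacement of the $S$-action by the $\Gal(K|k)$-action on the complex $C^\ast(N,K^*)$. The kernel identification and the character-theoretic count are routine by comparison.
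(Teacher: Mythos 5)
Your proposal is correct and follows essentially the same route as the paper: the paper's proof is exactly the projection $\pi_2\colon \Aut_S(B)\to \mathcal{Z}(\Gal(K|k))$, the exact sequence $1\to \Hom_{\Gal(K|k)}(N,K^*)\to \Aut_S(B)\to \mathcal{Z}(\Gal(K|k))$, and the chain of inequalities $|\Aut_S(B)|\le |\Hom(N,K^*)|\,|\mathcal{Z}(\Gal(K|k))|\le |N|\,|S/N|=|S|$, with equality forcing precisely conditions (1)--(3). Your write-up merely fills in details (kernel identification, image characterization) that the paper leaves implicit.
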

\begin{proof}
The group homomorphism 
\begin{align*}
\pi_2: \Aut_S(B) &\to \mathcal{Z}(\text{Gal}(k|K))\\
(\eta,\omega)&\mapsto \omega,
\end{align*}
induces the exact sequence $$1\to \Hom_{\Gal(K|k)}(N,K^*)\to \Aut_S(B)\overset{\pi_2}{\to} \mathcal{Z}(\text{Gal}(k|K)).$$
Therefore,
\begin{align*}
|\Aut_S(B)| &\leq |\Hom_{\Gal(K|k)}(N,K^*)| \ |\text{Im}(\pi_2)| \\
&\leq |\Hom(N,K^*)|\ |\mathcal{Z}(\text{Gal}(k|K))|\\
 &\leq |N|\ |\text{Gal}(k|K)|\\
 &= |N|\ |S/N|=|S|.
\end{align*}

Thus, $|\Aut_S(B)|=|S|$   if and only if $\text{Im}(\pi_2)=\mathcal{Z}(\text{Gal}(k|K))$,  $\mathcal{Z}(\text{Gal}(k|K))=\text{Gal}(k|K) $, $\Hom_{\Gal(K|k)}(N,K^*)=\Hom(N,K^*)$  and $|N|=|\Hom(N,K^*)|$, and these conditions are precisaly $(1)$, $(2)$ and $(3)$.

\end{proof}

\begin{lem}\label{Lema1:main teor 2}
Let $B$ be a simple $S$-Galois algebra with  Galois data
$(S,K,N,\sigma,\gamma)$. If $|\Aut_S(B)|=|S|$, then 
\begin{itemize}
\item $k$ has a primitive root of unity of order equal to the exponent of $N$,
\item there is a canonical decomposition  $S=N\oplus \Gal(K|k)$.
\end{itemize}
\end{lem}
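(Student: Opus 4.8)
The plan is to feed the hypothesis $|\Aut_S(B)|=|S|$ into Lemma~\ref{prop condiciones algebra simple}, which yields three facts: $N$ is abelian and $K\supseteq \mu_m$ for $m:=\exp(N)$; $\Gal(K|k)$ is abelian; and, most importantly, \emph{every} character $f\in\Hom(N,K^*)$ is $\Gal(K|k)$-equivariant, i.e. $f(\,{}^{g}n)=q(f(n))$ for every $q=\bar g\in\Gal(K|k)$ and $n\in N$, where ${}^{g}n=gng^{-1}$. Since $N$ is abelian this conjugation $\phi_q(n):={}^{g}n$ does not depend on the chosen lift $g$ of $q$, and since $f(n)\in\mu_m$ we have $q(f(n))=f(n)^{c(q)}$, where $c(q)\in(\mathbb{Z}/m)^{\times}$ records the action of $q$ on $\mu_m$ (so $q(\xi)=\xi^{c(q)}$ for all $\xi\in\mu_m$). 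Equivariance for all $f$ thus reads $f(\phi_q(n))=f(n^{c(q)})$, and because characters separate the points of the finite abelian group $N$, this forces $\phi_q(n)=n^{c(q)}$ for all $n$.

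For the first bullet I would show $c(q)\equiv 1\pmod m$, which is exactly $\mu_m\subseteq k$. Here the non-degeneracy of $\sigma$ enters: the skew form $\beta:=\Alt(\sigma)\colon N\times N\to\mu_m$ is a non-degenerate bicharacter. Dividing the cocycle identity \eqref{C2} written for $(x,y)$ by the one written for $(y,x)$, and using $xy=yx$, gives the invariance $q(\beta(x,y))=\beta(\phi_q(x),\phi_q(y))$. Substituting $\phi_q(n)=n^{c(q)}$ and using bilinearity turns this into $\beta(x,y)^{c(q)}=\beta(x,y)^{c(q)^2}$ for all $x,y$. Non-degeneracy provides an element $\beta(x_0,y_0)$ of order $m$ (the map $x\mapsto\beta(x,-)$ is an isomorphism $N\to\widehat N$, and $\widehat N$ has an element of order $m$), so $m\mid c(q)\big(c(q)-1\big)$; as $\gcd(c(q),m)=1$ this forces $c(q)\equiv 1\pmod m$. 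Hence each $q$ fixes $\mu_m$ and $\mu_m\subseteq k$.

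For the second bullet, $\phi_q=\id$ now shows $N$ is central in $S$, so $1\to N\to S\to\Gal(K|k)\to 1$ is a central extension; I would exhibit a canonical splitting using $\gamma$. After replacing $\sigma$ by a cohomologous $\mu_m$-valued representative (which alters neither $S$ nor the class and adjusts $\gamma$ accordingly, while keeping $\gamma|_{N\times N}=\beta$), identity \eqref{C2} together with $N$ central shows that $\gamma(g,-)\in\widehat N$ for each $g\in S$. For a lift $g$ of $q$ and $z\in N$, \eqref{C3} gives $\gamma(gz,x)=\gamma(z,x)\gamma(g,x)$ (using that $N$ is central and that $\gamma(z,x)\in\mu_m\subseteq k$), and the normalization of Proposition~\ref{Prop:chomologia normalizada} combined with \eqref{C1} identifies $\gamma(z,x)=\beta(z,x)$ for $z,x\in N$; thus $\gamma(gz,-)=\beta(z,-)\,\gamma(g,-)$. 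Since $z\mapsto\beta(z,-)$ is onto $\widehat N$, each coset of $N$ in $S$ contains a unique element $s(q)$ with $\gamma(s(q),-)\equiv 1$. Finally \eqref{C3} applied to $s(q)$ and $s(q')$ gives $\gamma\big(s(q)s(q'),-\big)\equiv 1$, so uniqueness yields $s(q)s(q')=s(qq')$: the section $s$ is a homomorphism, its image is a complement to $N$, and since $N$ is central the product is direct, $S=N\oplus s(\Gal(K|k))$.

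The hard part will be the first bullet: the whole force of the argument lies in combining the representation-theoretic input (equivariance of \emph{all} characters, giving $\phi_q\colon n\mapsto n^{c(q)}$) with the symplectic input (non-degeneracy of $\beta$) to pin the cyclotomic character down as trivial; isolating the clean relation $\beta(x,y)^{c(q)}=\beta(x,y)^{c(q)^2}$ and extracting a value of full order $m$ is the crux. Once $N$ is central, the splitting is essentially bookkeeping, resting on the observation that $\gamma(g,-)$ ranges over a full coset of $\widehat N$.
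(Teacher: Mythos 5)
Your proof of the first bullet is correct, though it takes a different route from the paper's. The paper extracts from condition (3) of Lemma \ref{prop condiciones algebra simple} a cochain $\eta_\omega$ with $\omega(\sigma)=\sigma\,\delta(\eta_\omega)$, deduces $\omega(\Alt(\sigma))=\Alt(\omega(\sigma))=\Alt(\sigma)$, and then uses non-degeneracy to find a value of $\Alt(\sigma)$ which is a primitive root of unity of order $\exp(N)$, necessarily fixed by all of $\Gal(K|k)$. You instead use condition (2) (equivariance of \emph{all} characters) to identify the conjugation action with the cyclotomic power map $n\mapsto n^{c(q)}$, and get the twisted invariance of $\beta$ from \eqref{C2}; non-degeneracy then forces $c(q)\equiv 1$. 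Both are sound, and your variant has the extra merit of proving that $N$ is central in $S$, which is needed later.

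The second bullet has a genuine gap, located exactly at the parenthetical step ``after replacing $\sigma$ by a cohomologous $\mu_m$-valued representative''. When $K$ is not algebraically closed, $K^*$ is not divisible, a class in $H^2(N,K^*)$ is \emph{not} determined by its alternating form, and Proposition \ref{Prop:exis de 2-cociclo para simplectco} is unavailable; so no general principle produces a $\mu_m$-valued (or even $k^*$-valued) representative of $[\sigma]$. Worse, this replacement is essentially equivalent to what you are proving: from it the splitting follows in a few lines (as you show), while conversely the paper obtains such a representative only \emph{after} the splitting is known, in Lemma \ref{Lema2:main teor 2}, by applying Hilbert 90 to the complement $\Gal(K|k)\subset S$. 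As written, your argument assumes a statement of the same strength as its conclusion. (For comparison, the paper's own proof of this bullet is entirely different: a terse biduality argument, $S=\Aut_{\Aut_S(B)}(B)$.)

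Moreover, the step can actually fail under the stated hypotheses, so the gap is not reparable. Take $k=\mathbb{Q}$, $K=\mathbb{Q}(\sqrt{2})$ with conjugation $\omega$, $S=\langle s\rangle\times\langle b\rangle\cong\mathbb{Z}/4\mathbb{Z}\times\mathbb{Z}/2\mathbb{Z}$, $N=\langle z,b\rangle$ with $z=s^2$; then $S/N\cong\Gal(K|k)$, but $N$ has no complement in $S$ because every involution of $S$ lies in $N$. Let $\sigma\in Z^2(N,K^*)$ be the cocycle with $u_z^2=1$, $u_b^2=\sqrt{2}-2$, $u_zu_b=-u_bu_z$ (so $\beta=\Alt(\sigma)$ is non-degenerate and $B=K_\sigma[N]\cong M_2(K)$ is simple), and set $\gamma(s^{\epsilon}n,x)=\beta(n,x)\,\zeta(x)^{\epsilon}$ for $n\in N$, $\epsilon\in\{0,1\}$, where $\zeta(1)=\zeta(z)=1$, $\zeta(b)=\zeta(zb)=1+\sqrt{2}$. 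The identities $N_{K/\mathbb{Q}}(\zeta(x))=\beta(z,x)$ and $\partial(\zeta)=\omega(\sigma)/\sigma$ (note $N_{K/\mathbb{Q}}(1+\sqrt{2})=-1$ and $\omega(\sqrt{2}-2)/(\sqrt{2}-2)=(1+\sqrt{2})^2$) yield \eqref{C1}--\eqref{C3} and the normalization of Proposition \ref{Prop:chomologia normalizada}, so $(S,K,N,\sigma,\gamma)$ is a Galois datum; and since $S$ is abelian and acts faithfully on $B$, its image provides $|S|$ elements of $\Aut_S(B)$, whence $|\Aut_S(B)|=|S|$ by the counting bound in the proof of Lemma \ref{prop condiciones algebra simple}. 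Nevertheless $[\sigma]$ has no $k^*$-valued representative, since restricting to $\langle b\rangle$ this would force $\sqrt{2}-2\in\mathbb{Q}^*\cdot(K^*)^2$, which is false, and indeed $S\neq N\oplus\Gal(K|k)$. So this datum satisfies the hypothesis of the Lemma but not its conclusion: your replacement step is exactly where the argument breaks, and no argument from the stated hypotheses can close it (the same example puts pressure on the paper's one-line biduality step, which appears to need hypotheses beyond Lemma \ref{prop condiciones algebra simple}).
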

\begin{proof}
Let $\omega\in \Gal(K|k)$. Then, there is $\eta_\omega:N\to K^*$ such that $\omega(\sigma)=\delta(\eta_\omega)\sigma$. Therefore, $\Alt(\sigma)=\Alt(\omega(\sigma))=\omega(\Alt(\sigma))$ for all $\omega\in \Gal(K|k)$. Since $N$ has exponent $n$ and $\Alt(\sigma)$ is a non-degenerate bicharacter there are  $x,y\in N$ such that $q=\Alt(\sigma)(x,y)$ is a primitive root of unity of order the exponent of $N$. Therefore, $\omega(q)=q$ for all $\omega\in \Gal(K|k)$, so $q\in k$.

Since $k$ has primitive root of unity $\Gal(K|k)$ acts trivially on $\Hom(N,K^*)=\Hom(N,k^*)$. Therefore,  by Proposition  \ref{prop condiciones algebra simple},  $S= \Aut_{\Aut_S(B)}(B) = N\oplus \Gal(K|k)$.
\end{proof}

\begin{lem}\label{Lema2:main teor 2}
Let $B$ be a simple $S$-Galois algebra with  Galois data
$(S,K,N,\sigma,\gamma)$ where $S=N\oplus \Gal(K|k)$. The Galois data is equivalent to a data $(S,K,N,\sigma',\gamma')$, where $\gamma'|_{\Gal(K|k)\times N}=1$,  $\sigma'\in Z^2(N,k^*)$ and $\gamma'|_{N\times N}=\Alt(\sigma')=\Alt(\sigma)$.
\end{lem}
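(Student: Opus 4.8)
The plan is to realize the asserted equivalence by a single explicit coboundary. Two Galois data $(S,K,N,\sigma,\gamma)$ and $(S,K,N,\sigma',\gamma')$ are equivalent exactly when their $2$-cocycles differ by an element of $B^2_S(N,K^*)$, i.e. by $\partial(\eta)$ for some normalized $\eta\in C^1(N,K^*)$; concretely $\sigma'(x,y)=\frac{\eta(x)\eta(y)}{\eta(xy)}\sigma(x,y)$ and $\gamma'(g,x)=\frac{{}^{g}\eta(x)}{\eta({}^{g}x)}\gamma(g,x)$. I would look for one $\eta$ that achieves all three conclusions simultaneously, namely a normalized cochain satisfying
\begin{equation*}
{}^{g}\eta(x)=\eta(x)\,\gamma(g,x)^{-1}\qquad\text{for all }g\in\Gal(K|k),\ x\in N.
\end{equation*}
Throughout I use that $S=N\oplus\Gal(K|k)$ is an \emph{internal direct} product, so the chosen copy of $\Gal(K|k)$ centralizes $N$ and ${}^{g}x=x$ for $g\in\Gal(K|k)$.

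\textbf{Why such an $\eta$ suffices.} Granting $\eta$, the conclusions follow quickly. Since ${}^{g}x=x$, the coboundary formula gives $\gamma'(g,x)=\frac{{}^{g}\eta(x)}{\eta(x)}\gamma(g,x)=1$ for $g\in\Gal(K|k)$, which is the condition $\gamma'|_{\Gal(K|k)\times N}=1$. For $\sigma'$, evaluate \eqref{C2} at $g\in\Gal(K|k)$: as ${}^{g}x=x$ it reads ${}^{g}\sigma=\partial(\gamma_g)\,\sigma$, where $\gamma_g(x):=\gamma(g,x)$. Then ${}^{g}\sigma'=\partial({}^{g}\eta)\,\partial(\gamma_g)\,\sigma$, and the defining relation ${}^{g}\eta=\eta\,\gamma_g^{-1}$ (so $\partial({}^{g}\eta)=\partial(\eta)\,\partial(\gamma_g)^{-1}$) collapses this to ${}^{g}\sigma'=\partial(\eta)\sigma=\sigma'$. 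Thus $\sigma'$ is $\Gal(K|k)$-invariant, hence takes values in $(K^*)^{\Gal(K|k)}=k^*$, i.e. $\sigma'\in Z^2(N,k^*)$. Finally $\partial(\eta)\in B^2_S(N,K^*)\subset Z^2_S(N,K^*)_n$ by Proposition \ref{Prop:chomologia normalizada}, so $(\sigma',\gamma')$ lies in the same $B^2_S$-coset as $(\sigma,\gamma)$ and is again normalized; hence $\gamma'|_{N\times N}(x,y)=\sigma'(x,y)\sigma'(xy,x^{-1})\sigma'(x,x^{-1})^{-1}=\Alt(\sigma')(x,y)$ for the abelian group $N$, while $\Alt(\sigma')=\Alt(\sigma)$ because $\Alt(\partial(\eta))=1$.

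\textbf{Existence of $\eta$, the crux.} The heart of the matter is that Hilbert's Theorem~90 applies one element of $N$ at a time. Fix $x\in N$ and consider $c^{(x)}\colon\Gal(K|k)\to K^*$, $c^{(x)}_g=\gamma(g,x)$. Specializing \eqref{C3} to $g,h\in\Gal(K|k)$ and using ${}^{h}x=x$ yields $\gamma(gh,x)={}^{g}\gamma(h,x)\,\gamma(g,x)$, which is precisely the $1$-cocycle identity $c^{(x)}_{gh}=c^{(x)}_g\,{}^{g}c^{(x)}_h$. Hence $c^{(x)}\in Z^1(\Gal(K|k),K^*)$, and since $H^1(\Gal(K|k),K^*)=0$ there is $\eta(x)\in K^*$ with $\gamma(g,x)=\eta(x)/{}^{g}\eta(x)$; putting $\eta(1_N)=1$ (consistent, as $c^{(1_N)}$ is trivial by normalization) defines the required $\eta\in C^1(N,K^*)$. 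No compatibility among the choices for different $x$ is needed, since $\eta$ is merely a $1$-cochain. The only genuine obstacle is thus the recognition that the \emph{mixed} cocycle relation \eqref{C3}, restricted to the complementary subgroup $\Gal(K|k)$, turns $g\mapsto\gamma(g,x)$ into an honest Galois $1$-cocycle, so that descent is governed by Hilbert~90; once this is seen, the same $\eta$ automatically forces $\sigma'$ down to $k^*$. The remaining verification, that prescribing $\gamma'$ on $N\times N$ and on $\Gal(K|k)\times N$ determines the whole datum, is routine bookkeeping: every element of $S$ factors as $xq$ with $x\in N$, $q\in\Gal(K|k)$, and \eqref{C3} propagates $\gamma'$ to all of $S\times N$.
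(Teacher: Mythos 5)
Your proposal is correct and follows essentially the same route as the paper's proof: for each fixed $x\in N$ the restriction of the mixed relation \eqref{C3} to $\Gal(K|k)$ makes $g\mapsto\gamma(g,x)$ a Galois $1$-cocycle, Hilbert's Theorem 90 trivializes it, and adjusting $(\sigma,\gamma)$ by the coboundary of the resulting $1$-cochain kills $\gamma'|_{\Gal(K|k)\times N}$, with \eqref{C2} then forcing $\sigma'$ to take values in $k^*$. The only difference is that you spell out the Galois-invariance computation for $\sigma'$ and the normalization bookkeeping for $\gamma'|_{N\times N}=\Alt(\sigma')$, which the paper leaves implicit.
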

\begin{proof}
Since $S$ is abelian the equation \eqref{C3} implies that for every $x\in N$, $\gamma(-,x):\Gal(K|k)\to K^*$ is a 1-cocycle. By Hilbert's Theorem 90 $H^1(\Gal(K|k),K^*)=0$, therefore there exists $\tau:N\to K^*$ such that $\gamma(g,x)=g(\tau(x))/\tau(x)$ for all $x \in N, g\in \Gal(K|k)$. Therefore, if $\sigma'=\sigma\delta(\tau)$ and $\gamma'=\gamma\partial(\tau)$, it follows that  $(S,K,N,\sigma',\gamma')$ is a new Galois datum where $\gamma'|_{\Gal(K|k)\times N}=1$, thus $\sigma'\in Z^2(N,k^*)$ and $\gamma'|_{N\times N}=\Alt(\sigma')=\Alt(\sigma)$. By the equation \eqref{C2}, $g\sigma'(x,y)=\sigma'(x,y)$ for all $g\in \Gal(K|k)$, then $\sigma'(x,y)\in k^*$ for all $x,y\in N$.
\end{proof}
\begin{teor}\label{main result 2} Let $G$ be a finite group and let $k$ be a
field.

\begin{enumerate}[i)]
  \item If $(S,K,N,\s,\gamma)$ is a torsor datum associated  to $G$, then
 $$|\Aut_G(\Ind_S^G(A(K_\sigma[N],\gamma)))|=|G|.$$
  \item Let $A$ be a $G$-Galois algebra such that $|\Aut_G(A)|=|G|$. There exists a torsor datum $(S,K,N,\s,\gamma)$ such that $A\simeq \Ind_S^G(A(K_\sigma[N],\gamma))$.
\end{enumerate} \end{teor}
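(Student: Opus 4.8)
The plan is to read this theorem as a dictionary translating Corollary \ref{corol: condiciones igual orden} into the language of torsor data, with the structural Lemmas \ref{prop condiciones algebra simple}, \ref{Lema1:main teor 2} and \ref{Lema2:main teor 2} supplying the passage between the two formulations. Throughout I write $B=A(K_\sigma[N],\gamma)$, so that $A=\Ind_S^G(B)$. Since the cocycle $\sigma$ of any Galois datum is non-degenerate and $N$ is a finite abelian group, the twisted group algebra $K_\sigma[N]$ has center exactly $K$ and is simple; hence $B$ is a simple $S$-Galois algebra and Corollary \ref{corol: condiciones igual orden} is applicable in both directions.

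For part (i) I would verify the three hypotheses of Corollary \ref{corol: condiciones igual orden}. Normality of $S$ is condition (6) of Definition \ref{defin:torsor data}, and condition (7) states that the conjugated data $(\sigma^g,\gamma^g)$ are cohomologous to $(\sigma,\gamma)$, which is precisely the assertion that $B^{(g)}\cong B$ as $S$-algebras for all $g\in G$ (the isomorphism class of such a simple twisted algebra being determined by the cohomology class of its data). It then remains to check $|\Aut_S(B)|=|S|$ through Lemma \ref{prop condiciones algebra simple}: condition (1) there holds since $N\subseteq S$ is abelian and, by condition (3) of the torsor datum, $k\subseteq K$ already contains a primitive root of unity of order the exponent of $N$; condition (2) holds because every homomorphism $N\to K^*$ lands in those roots of unity, hence in $k^*$, so $\Gal(K|k)$ acts trivially and $\Hom_{\Gal(K|k)}(N,K^*)=\Hom(N,K^*)$; and condition (3) holds because $\Gal(K|k)\cong S/N$ is abelian while $\sigma$ and its companion cochain $\eta$ take values in $k^*$ by conditions (4)--(5), so the Galois action fixes their classes. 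Corollary \ref{corol: condiciones igual orden} then yields $|\Aut_G(A)|=|G|$.

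For part (ii) I would first invoke Theorem \ref{main result} to write $A\simeq\Ind_S^G(B)$ for some Galois datum $(S,K,N,\sigma,\gamma)$ with $B$ simple. The hypothesis $|\Aut_G(A)|=|G|$ together with Corollary \ref{corol: condiciones igual orden} delivers at once the three facts $|\Aut_S(B)|=|S|$, $S\trianglelefteq G$, and $B^{(g)}\cong B$ for all $g$. From $|\Aut_S(B)|=|S|$, Lemma \ref{Lema1:main teor 2} provides the primitive root of unity in $k$ and the canonical splitting $S=N\oplus\Gal(K|k)$, so in particular $S$ and $\Gal(K|k)$ are abelian; then Lemma \ref{Lema2:main teor 2} replaces $(\sigma,\gamma)$ by an equivalent datum with $\sigma\in Z^2(N,k^*)$, $\gamma|_{\Gal(K|k)\times N}=1$ and $\gamma|_{N\times N}=\Alt(\sigma)$. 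These supply conditions (1)--(5) of Definition \ref{defin:torsor data}, and condition (7) is the cohomological restatement of $B^{(g)}\cong B$, which is representative-independent and hence survives the normalization.

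The one requirement not handed over directly is the normality of $N$ in $G$ (half of condition (6)), and this is the step I expect to be the main obstacle --- indeed it is needed even to formulate $(\sigma^g,\gamma^g)$ in condition (7). My plan is to characterize $N$ intrinsically as $N=\ker(S\to\Gal(K|k))$, the subgroup acting trivially on the center $Z(B)=K$. Given $g\in G=N_G(S)$ and an $S$-algebra isomorphism $\phi\colon B^{(g)}\to B$, restriction to centers gives $\phi|_K\in\Gal(K|k)$ intertwining the two $S$-actions on $K$, so the equivariance reads $\phi\circ\overline{ghg^{-1}}=\bar h\circ\phi$ in $\Gal(K|k)$; because $\Gal(K|k)$ is abelian this cancels to $\overline{ghg^{-1}}=\bar h$ for every $h\in S$. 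Taking $h\in N$ gives $\overline{gng^{-1}}=1$, i.e. $gNg^{-1}\subseteq N$, whence $gNg^{-1}=N$ and $N\trianglelefteq G$. With condition (6) secured, the normalized datum is a torsor datum realizing $A\simeq\Ind_S^G(A(K_\sigma[N],\gamma))$, completing the argument.
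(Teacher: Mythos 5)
Your proposal is correct, and its skeleton coincides with the paper's: both directions are run through Corollary \ref{corol: condiciones igual orden}, with Lemma \ref{prop condiciones algebra simple} supplying $|\Aut_S(B)|=|S|$ for part (i), and Theorem \ref{main result} plus Lemmas \ref{Lema1:main teor 2} and \ref{Lema2:main teor 2} producing conditions (1)--(5) of Definition \ref{defin:torsor data} for part (ii). The genuine divergence is in how normality of $N$ and condition (7) are obtained. The paper gets both at once by citing \cite[Proposition 5.6]{Manuel}: $B^{(g)}\cong B$ for all $g$ if and only if $N$ is normal in $G$ \emph{and} $(\sigma^g,\gamma^g)$ is cohomologous to $(\omega(\sigma),\omega(\gamma))$ for some $\omega\in\Gal(K|k)$, after which condition (3) of Lemma \ref{prop condiciones algebra simple} removes the twist $\omega$. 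You instead prove $N\trianglelefteq G$ by hand, identifying $N$ as the kernel of the $S$-action on the center $Z(B)=K$ and cancelling $\phi|_K$ in the intertwining relation using abelianness of $\Gal(K|k)$; this argument is valid (morphisms of Galois algebras are $k$-algebra maps, so $\phi|_K\in\Gal(K|k)$, and $ghg^{-1}\in S$ makes sense because $S\trianglelefteq G$ has already been secured from the Corollary), and it is a nice self-contained replacement for that half of the citation.

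What your write-up elides is the other half. Your parenthetical claim that the isomorphism class of such a simple twisted algebra is ``determined by the cohomology class of its data'' is, in general, only true up to the $\Gal(K|k)$-action on coefficients; so translating between $B^{(g)}\cong B$ and $[(\sigma,\gamma)]=[(\sigma^g,\gamma^g)]$ --- needed in the easy direction for part (i) and, more seriously, in the hard direction for part (ii) --- still rests on the classification result of \cite{Manuel} together with the observation that under conditions (1)--(5) (equivalently, condition (3) of Lemma \ref{prop condiciones algebra simple}) the Galois twist acts trivially on the class of $(\sigma,\gamma)$. This is exactly the step the paper spells out; it is available to you in both contexts, so the omission is a gloss rather than a gap, but it should be made explicit for the argument to be complete.
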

\begin{proof}
Let $A$ be a $G$-Galois algebra such that $|\Aut_G(A)|=|A|$. By  Lemma \ref{Lema1:main teor 2} and Lemma \ref{Lema2:main teor 2}  $A$ has a Galois datum  $(S,K,N,\s,\gamma)$  which satisfies the first five conditions of Definition \ref{defin:torsor data}. 

By  Corollary \ref{corol: condiciones igual orden}, if  $|\Aut_G(A)|=|G|$, then $S$ is a normal subgroup of $G$. We want to find conditions that imply $A(K_\sigma[N],\gamma)\cong A(K_\sigma[N],\gamma)^{(\sigma)}$ for all $\sigma\in G$.
The Galois datum of the simple $S$-algebra  $A(K_\sigma[N],\gamma)^{(g)}$ is $$(g^{-1}Sg,g^{-1}Ng,\sigma^{(g)},\gamma^{(g)}),$$
where $$\sigma^{(g)}(x,y):=\sigma(gxg^{-1},gyg^{-1}),\  \  \gamma^{(g)}(h,x):=\gamma(ghg^{-1},gxg^{-1})$$ for all $h\in g^{-1}Sg, x,y \in g^{-1}Sg$.
By  \cite[Proposition 5.6]{Manuel} $A(K_\sigma[N],\gamma)\cong A(K_\sigma[N],\gamma)^{(g)}$ for all $g\in G$ if and only $N$ is normal in $G$ and  there is $\omega \in \Gal(K|k)$ such that $(\omega(\sigma),\omega(\eta))$ is cohomologous to $(\sigma^g,\eta^g)$. By condition (3) of Lemma \ref{prop condiciones algebra simple} $(\omega(\sigma),\omega(\eta))$   is cohomologous to $(\sigma,\eta)$, so $A(K_\sigma[N],\gamma)\cong A(K_\sigma[N],\gamma)^{(\sigma)}$ if and only if for all $g\in G$, $$[(\sigma,\eta)]=[(\sigma^g,\eta^g)],$$ as elements in $H^2_{\Gal(K|k)}(N,K^*)$, where $\sigma^g(x,y)=\sigma(gxg^{-1},gyg^{-1})$ and $\eta^g(s,x)=\eta(gsg^{-1},gxg^{-1})$, for all $x,y\in N, g\in G$. Therefore, the new Galois  datum constructed using Lemma \ref{Lema2:main teor 2} is a torsor datum.

Conversely, if $(S,K,N,\s,\gamma)$ is a torsor datum, then the first five conditions of Definition \ref{defin:torsor data} and Lemma \ref{prop condiciones algebra simple} imply that $|\Aut_S(A(K_\sigma,\gamma))|=|S|$. The conditions (6) and (7) of Definition \ref{defin:torsor data}  and Corollary \ref{corol: condiciones igual orden} imply that $|\Aut_G(\Ind_S^G(A(K_\sigma[N],\gamma)))|=|G|.$
\end{proof}

The next corollary is a generalization of \cite[Theorem 1.3]{isocategorical} and \cite[Corollary 6.2]{Davydov}.
\begin{corol}
Let $G_1$ and $G_2$ be finite groups and let $k$ be a field. Then, $G_1$ and $G_2$ are isocategorical over $k$ if and if there is a torsor datum of $G_1$ with $G_1$-Galois algebra associated $A$ such that $\Aut_{G_1}(A)\cong G_2$.
\end{corol}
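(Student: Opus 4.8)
The plan is to derive this corollary directly from the machinery already assembled, treating it as the synthesis of Proposition \ref{Prop equivalencias con isocat existencia bigalois} and Theorem \ref{main result 2}. The statement asserts that $G_1$ and $G_2$ are isocategorical over $k$ if and only if there is a torsor datum of $G_1$ whose associated $G_1$-Galois algebra $A$ satisfies $\Aut_{G_1}(A)\cong G_2$. The natural route is to chain together the two characterizations: isocategoricity is equivalent to the existence of a $G_1$-Galois algebra $A$ with $\Aut_{G_1}(A)\cong G_2$ and $|G_1|=|G_2|$ (Proposition \ref{Prop equivalencias con isocat existencia bigalois}(c)), and the condition $|\Aut_{G_1}(A)|=|G_1|$ is precisely what Theorem \ref{main result 2} characterizes in terms of torsor data.

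First I would prove the forward direction. Suppose $G_1$ and $G_2$ are isocategorical over $k$. By Proposition \ref{Prop equivalencias con isocat existencia bigalois}(c), there exists a $G_1$-Galois algebra $A$ with $\Aut_{G_1}(A)\cong G_2$ and $|G_1|=|G_2|$; in particular $|\Aut_{G_1}(A)|=|G_1|$. Now apply Theorem \ref{main result 2}(ii): since $A$ is a $G_1$-Galois algebra with $|\Aut_{G_1}(A)|=|G_1|$, there exists a torsor datum $(S,K,N,\sigma,\gamma)$ associated to $G_1$ such that $A\simeq \Ind_S^G(A(K_\sigma[N],\gamma))$. This torsor datum has associated $G_1$-Galois algebra (isomorphic to) $A$, and $\Aut_{G_1}(A)\cong G_2$, as required.

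For the converse, suppose we are given a torsor datum of $G_1$ with associated $G_1$-Galois algebra $A$ and $\Aut_{G_1}(A)\cong G_2$. By Theorem \ref{main result 2}(i), the torsor datum guarantees $|\Aut_{G_1}(A)|=|G_1|$, hence $|G_2|=|\Aut_{G_1}(A)|=|G_1|$. Thus $A$ is a $G_1$-Galois algebra with $\Aut_{G_1}(A)\cong G_2$ and $|G_1|=|G_2|$, which is exactly condition (c) of Proposition \ref{Prop equivalencias con isocat existencia bigalois}; invoking the equivalence of (a) and (c) there yields that $G_1$ and $G_2$ are isocategorical over $k$.

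The proof is therefore essentially a bookkeeping argument, and I do not expect a genuine obstacle: the real content was already absorbed into Theorem \ref{main result 2}, whose Lemmas \ref{Lema1:main teor 2} and \ref{Lema2:main teor 2} do the work of upgrading an arbitrary Galois datum with $|\Aut_S(B)|=|S|$ to an honest torsor datum. The one point requiring minor care is ensuring that the field $k$ admits a primitive root of unity of order the exponent of $N$, which is a standing hypothesis baked into the definition of a torsor datum (condition (3) of Definition \ref{defin:torsor data}) and which Lemma \ref{Lema1:main teor 2} shows is automatic once $|\Aut_{G_1}(A)|=|G_1|$; so no extra assumption on $k$ needs to be imposed beyond what the torsor datum already records. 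Hence the corollary follows formally by composing the two cited results.
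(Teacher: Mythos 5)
Your proof is correct and is exactly the argument the paper intends: the corollary is stated there with no written proof (just a qed), precisely because it follows formally by composing Proposition \ref{Prop equivalencias con isocat  existencia bigalois}(c) with the two directions of Theorem \ref{main result 2}, which is what you do. Your remark that the root-of-unity condition is automatic by Lemma \ref{Lema1:main teor 2} is also consistent with how the paper absorbs that content into Theorem \ref{main result 2}.
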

\qed
\subsection{Isocategorical groups over formally real fields}

A field $k$ is called formally real if $-1$ is not a sum of squares. A formally real field with no formally real proper algebraic extensions is called a real closed field. Examples of real fields are $\mathbb{Q}$ and $\mathbb{R}$. The field of real numbers is a real closed.

Every formally real field $k$ has characteristic zero and the field extension $k\subset k(i)$ (where $i^2=-1$) is a Galois extension with $\Gal(k(i)|k)\cong \mathbb{Z}/2\mathbb{Z}$.
\begin{defin}\label{defin: real torsor data}
Let $G$ be a finite group and $k$ be a formally real field. 
\begin{enumerate}
\item  A real torsor datum for $G$ is a pair $(N,\sigma)$ where

\begin{itemize}
\item $N$ is a normal abelian elementary 2-subgroup of $G$.
\item $\sigma \in Z^2(N,\mu_2)$ is a non-degenerate 2-cocycle, where $\mu_2=\langle -1 \rangle$. 
\item  $[\sigma]=[\sigma^g]$ for all $g\in G/N$, as elements in $H^2(N,\mu_2)$.
\end{itemize}

\item A semi-real torsor datum for $G$ is a torsor datum  $(S,k(i), N,\sigma,\gamma)$, where 
\begin{itemize}
\item $N$ is an abelian 2-group 
\item $\sigma \in Z^2(N,\mu_2)$, where $\mu_2=\langle -1 \rangle$. 
\item   $[(\sigma,\gamma)]=[(\sigma^g,\gamma^g)]$  for all $g\in G/S$, as elements in $H^2_S(N,\mu_4)$, where $\mu_4=\langle i \rangle$.
\end{itemize}
\end{enumerate}
\end{defin}
\begin{obs}
A real torsor datum $(N,\sigma)$ defines a torsor datum $(N,k,N,\sigma,\gamma_\sigma)$, where $\gamma_\sigma(x,y)=\sigma(x,y)\sigma(xy,x^{-1})\sigma(x,x^{-1})^{-1},$ for all $x, y\in V$.
\end{obs}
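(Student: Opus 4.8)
The plan is to verify that $(N,k,N,\sigma,\gamma_\sigma)$ satisfies the axioms of a Galois datum together with the seven conditions of Definition~\ref{defin:torsor data}. Since here $K=k$ and $S=N$, the Galois group $\Gal(K|k)=S/N$ is trivial, so several conditions are immediate: the extension $k\subseteq k$ is trivially Galois with group $S/N$, condition (1) holds as $N$ is abelian, condition (2) reads $S=N\oplus\Gal(K|k)=N$, and condition (6) holds because $N$ and $S=N$ are normal in $G$ by hypothesis. As $k$ is formally real it has characteristic zero, so $\car(k)\nmid|N|$; and since $N$ is elementary abelian of exponent $2$, the element $-1\in k$ is a primitive root of unity of order equal to the exponent of $N$, giving condition (3). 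Condition (4) is clear because $\sigma\in Z^2(N,\mu_2)$ and $\mu_2\subset k^*$, and the clause $\gamma_\sigma|_{\Gal(K|k)\times N}=1$ of condition (5) is vacuous.

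The computational heart of the proof is the remaining clause of condition (5), namely $\gamma_\sigma=\Alt(\sigma)$ on $N\times N$. Here I would exploit that $N$ is an elementary abelian $2$-group, so $x^{-1}=x$ for all $x\in N$, and specialize the cocycle identity \eqref{C1} to the triple $(y,x,x)$; using $x^2=1$ and the normalization $\sigma(y,1)=1$ this yields $\sigma(y,x)\,\sigma(yx,x)=\sigma(x,x)$, that is, $\sigma(xy,x)=\sigma(x,x)\,\sigma(y,x)^{-1}$. Substituting this into the definition $\gamma_\sigma(x,y)=\sigma(x,y)\sigma(xy,x^{-1})\sigma(x,x^{-1})^{-1}$ collapses it to $\gamma_\sigma(x,y)=\sigma(x,y)\sigma(y,x)^{-1}=\Alt(\sigma)(x,y)$. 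In particular $\gamma_\sigma$ equals the skew-symmetric bicharacter $\Alt(\sigma)$, hence is a pairing, and condition (5) is complete.

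For the Galois-datum requirement that $(\sigma,\gamma_\sigma)$ be a $2$-cocycle in $Z^2_N(N,k^*)_n$, I would invoke the case $Q=1$ in the proof of Proposition~\ref{Prop:chomologia normalizada}: the assignment $\sigma\mapsto(\sigma,\gamma_\sigma)$ is exactly the quasi-isomorphism $Z^2(N,k^*)\to Z^2_N(N,k^*)$ recorded there, whose image lies in the normalized subgroup, and $\sigma$ is non-degenerate by hypothesis. Finally, for condition (7) I would push the hypothesis $[\sigma]=[\sigma^g]$ in $H^2(N,\mu_2)$ forward along the inclusion $\mu_2\hookrightarrow k^*$ to get $[\sigma]=[\sigma^g]$ in $H^2(N,k^*)$; since the above quasi-isomorphism induces $H^2(N,k^*)\cong H^2_N(N,k^*)$ and carries $[\sigma]$ to $[(\sigma,\gamma_\sigma)]$, and since a direct substitution gives $\gamma_\sigma^g=\gamma_{\sigma^g}$, we obtain $[(\sigma,\gamma_\sigma)]=[(\sigma^g,\gamma_\sigma^g)]$ in $H^2_N(N,k^*)$, which is condition (7). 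The only genuinely nontrivial step is the identity $\gamma_\sigma=\Alt(\sigma)$; everything else rests on the vanishing of $\Gal(K|k)$ and on $k$ having characteristic zero.
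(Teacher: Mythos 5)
Your proposal is correct and follows exactly the route the paper leaves implicit: the paper states this as a remark with no proof, taking for granted that the conditions of a torsor datum collapse when $K=k$, $S=N$, and that $\gamma_\sigma$ is the normalized companion of $\sigma$ from Proposition \ref{Prop:chomologia normalizada}. Your verification — in particular the computation $\gamma_\sigma(x,y)=\sigma(x,y)\sigma(y,x)^{-1}=\Alt(\sigma)(x,y)$ from the cocycle identity \eqref{C1} applied to $(y,x,x)$ with $x^2=1$, and the equivariance $\gamma_{\sigma^g}=\gamma_\sigma^g$ giving condition (7) — is a correct and complete fleshing-out of precisely what the paper treats as immediate.
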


\begin{lem}\label{Lema:resultado3}
Let $N$ be an elementary abelian 2-group, $k$ a real closed field and let $S=N\oplus\Gal(k(i)|k)$.  Every 2-cocycle in $Z^2_S(N,k(i)^*)$ is cohomologous to a 2-cocycle in  $\sigma\in Z^2_S(N,\mu_2)$ and  $\alpha, \beta  \in Z^2_S(N,k(i)^*)$ are cohomologous as elements in $Z^2_S(N,k(i)^*)$ if and only if $\alpha, \beta$ are cohomologous as elements in $Z^2_S(N,\mu_4)$.
\end{lem}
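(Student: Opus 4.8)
The plan is to exploit that $k$ being real closed makes $K:=k(i)$ algebraically closed, so that $K^*$ is divisible and the power maps $K^*\xrightarrow{(-)^2}K^*$ and $K^*\xrightarrow{(-)^4}K^*$ are surjective with kernels $\mu_2$ and $\mu_4$; being Galois equivariant they are morphisms of $S$-modules (recall $S=N\oplus\Gal(K|k)$, where $N$ acts trivially and $Q:=\Gal(K|k)=\langle c\rangle$ acts by conjugation). I will argue directly with the explicit cocycle equations \eqref{C1}--\eqref{C3} and the coboundary formulas, together with Proposition \ref{Prop:exis de 2-cociclo para simplectco} (applicable since $K^*$ is divisible) identifying $H^2(N,K^*)\cong\bigwedge^2\widehat N$ via $\Alt$. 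The one elementary fact used twice is that, since $N$ is an elementary abelian $2$-group, \emph{every} homomorphism $N\to K^*$ is $\mu_2$-valued.

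For the first assertion I first normalize the $\sigma$-component. Given $(\sigma,\gamma)\in Z^2_S(N,K^*)$, the form $\Alt(\sigma)$ is $\mu_2$-valued because $N$ has exponent two and $\Alt(\sigma)$ is a bicharacter. Choosing a $\mu_2$-valued bilinear cocycle $\sigma_0\in Z^2(N,\mu_2)$ with $\Alt(\sigma_0)=\Alt(\sigma)$, Proposition \ref{Prop:exis de 2-cociclo para simplectco} gives $[\sigma]=[\sigma_0]$ in $H^2(N,K^*)$, so $\sigma=\sigma_0\,\partial(\rho)$ for some $\rho\colon N\to K^*$. Replacing $(\sigma,\gamma)$ by $(\sigma,\gamma)\,\partial(\rho)^{-1}$, which is an $S$-coboundary, we may assume $\sigma=\sigma_0\in Z^2(N,\mu_2)$. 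Then the $\gamma$-component follows for free: since $\sigma$ is now real and $S$ is abelian, for each $g\in S$ equation \eqref{C2} collapses (using ${}^g\sigma=\sigma$ and $\sigma({}^gx,{}^gy)=\sigma(x,y)$) to $\gamma(g,xy)=\gamma(g,x)\gamma(g,y)$, so $\gamma(g,-)\colon N\to K^*$ is a homomorphism and hence $\mu_2$-valued. Thus $(\sigma,\gamma)\in Z^2_S(N,\mu_2)$, proving the first claim; in particular every class of $H^2_S(N,K^*)$ is represented by a cocycle of order dividing two, so $H^2_S(N,K^*)$ is $2$-torsion.

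For the equivalence the ``if'' direction is immediate from $\mu_4\subseteq K^*$. For ``only if'', suppose $\alpha\beta^{-1}=\partial(\tau)$ with $\tau\colon N\to K^*$. Here I use the standing hypothesis (guaranteed by the first part, and present in the intended application where $\sigma\in Z^2(N,\mu_2)$) that the $N\times N$-components of $\alpha$ and $\beta$ are $\mu_2$-valued, so that the $N\times N$-part of the coboundary, $\tau(x)\tau(y)/\tau(xy)$, lies in $\mu_2$. Then $x\mapsto\tau(x)^2$ is a homomorphism $N\to K^*$, hence $\mu_2$-valued, whence $\tau(x)^4=1$ and $\tau$ takes values in $\mu_4$. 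Consequently $\alpha\beta^{-1}=\partial(\tau)$ is already a coboundary in $Z^2_S(N,\mu_4)$, so $\alpha$ and $\beta$ are cohomologous over $\mu_4$.

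The main obstacle is precisely this last step. If the $\sigma$-component were only known to lie in $\mu_4$ one would merely deduce $\tau\in\mu_8$, and indeed the map $H^2_S(N,\mu_4)\to H^2_S(N,K^*)$ has a genuinely nontrivial kernel arising from the connecting homomorphism of $1\to\mu_4\to K^*\xrightarrow{(-)^4}K^*\to1$; it is exactly the $\mu_2$-valuedness of $\sigma$ supplied by the first assertion that forces $\tau^2\in\mu_2$ and rescues the argument, so the two statements of the lemma must be read and applied in tandem. A secondary point to check is the bookkeeping of the $\gamma$-component under the auxiliary coboundary $\partial(\rho)$ in the first part, but \eqref{C2} renders this automatic the moment $\sigma$ is real.
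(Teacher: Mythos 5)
Your proof is correct and takes essentially the same route as the paper: both normalize $\sigma$ to a $\mu_2$-valued bicharacter via Proposition \ref{Prop:exis de 2-cociclo para simplectco}, let equation \eqref{C2} force each $\gamma(g,-)$ to be a character of the exponent-$2$ group $N$ (hence $\mu_2$-valued), and then prove the $\mu_4$-statement by showing that any $1$-cochain whose coboundary is $\mu_2$-valued must itself take values in $\mu_4$. The only divergence is cosmetic: you get $\tau\in\mu_4$ by squaring the $N\times N$-component of $\partial(\tau)$ so that $\tau^2$ becomes a character, whereas the paper deduces the same from the Galois component $\gamma(g,x)=\overline{\eta(x)}/\eta(x)\in\mu_2$; your variant is, if anything, slightly tidier since it does not rely on the paper's unexplained (though harmless) assertion that $\eta$ is root-of-unity valued.
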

\begin{proof}
Since $k$ is a real closed field,  $k(i)$ is an algebraically closed field of characteristic zero. It is well known that for finite abelian groups every 2-cocycle with values in $k(i)^*$ is cohomologous to a bicharacter \cite[Proposition 2.6]{Tambara-Func}, so we can suppose that $\sigma$ is a bicharacter  and since $N$ is an elementary 2-group $\sigma$ takes values in $\mu_2$.   Then, the equation \eqref{C2} implies that $\gamma(g, -):N\to k(i)^*$ is a character for all $g \in S$. Again, since $N$ is an elementary abelian 2-group,  $\gamma$ only takes values in $\mu_2$. Now, equation \eqref{C3} implies that $\gamma:S\times N\to \mu_2$ in fact is a pairing and therefore every 2-cocycle in $Z^2_S(N,k(i)^*)$ is cohomologous to a 2-cocycle in $Z^2_S(N,\mu_2)\subset Z^2_S(N,\mu_4)$. Let $(\sigma,\gamma)\in Z^2_S(N,\mu_2)\subset Z^2_S(N,\mu_4)$ and $\eta:N\to \mu_\infty$, such that $\partial(\eta)=(\sigma,\gamma)$. Let $g\in \Gal(K(i)|k)\subset S$ be the  conjugation  on $k(i)$. Then  $\gamma(g,x)=\overline{\eta(x)}/\eta(x)$ for all $x\in N$. Since  $\gamma(g,x)\in \mu_2$, then $\eta(x)\in \mu_4$ for all $x\in N$. Then, if $\alpha, \beta \in Z^2_S(N,\mu_2)\subset Z^2_S(N,\mu_4)$ and they are cohomologous as elements in $Z^2_S(N,k(i)^*)$ they are cohomologous as elements in $Z^2_S(N,\mu_4)$.

\end{proof}

\begin{teor}\label{main result 3} Let $G$ be a finite group and $k$ be a
formally real closed field. Let $A$ be a Galois algebra over $k$ such that $|\Aut_G(A)|=|G|$. Then, $A\simeq \Ind_S^G(A(k(i)_\sigma[N],\gamma))$ or $A\simeq \Ind_S^G(A(k_\sigma[N],\gamma))$ for a semi-real  or real torsor datum, respectively.
\end{teor}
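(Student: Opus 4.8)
The plan is to specialize the general classification of Theorem~\ref{main result 2} to the very rigid field theory of a real closed field. Since $|\Aut_G(A)|=|G|$, Theorem~\ref{main result 2} supplies a torsor datum $(S,K,N,\sigma,\gamma)$ associated to $G$ with $A\simeq\Ind_S^G(A(K_\sigma[N],\gamma))$. The field $K$ is a Galois extension of $k$ with group $S/N$, and by the Artin--Schreier theorem the algebraic closure of a real closed field $k$ is $k(i)$ with $[k(i):k]=2$; hence the only finite Galois extensions of $k$ are $k$ and $k(i)$. This forces exactly the dichotomy in the statement: either $K=k$, so $S=N$, or $K=k(i)$, so $S=N\oplus\Gal(k(i)|k)$.

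I would next pin down $N$. Condition~(3) of a torsor datum makes $k$ contain a primitive root of unity whose order equals the exponent of $N$; since $k$ is formally real its only roots of unity are $\pm1$, so the exponent of $N$ divides $2$ and $N$ is an elementary abelian $2$-group. Consequently the bicharacter $\Alt(\sigma)$ is $\mu_2$-valued, and by condition~(5) the pairing $\gamma$ is $\mu_2$-valued as well. It therefore only remains to replace $\sigma$ by a cohomologous cocycle with values in $\mu_2$, without disturbing the normalization $\gamma|_{\Gal(K|k)\times N}=1$ and $\gamma|_{N\times N}=\Alt(\sigma)$, and to check that the conjugation-invariance of condition~(7) survives the shrinking of coefficients.

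For the coefficient reduction I would use the two lemma-level tools the paper provides, one in each case. In the split case $K=k$ one has $S=N$, so the datum is just a pair $(N,\sigma)$ with $\sigma\in Z^2(N,k^*)$ non-degenerate and $H^2_S(N,K^*)=H^2(N,k^*)$. Writing $k^*\cong\mu_2\times k_{>0}$, the positive cone $k_{>0}$ is uniquely divisible, hence a $\mathbb{Q}$-vector space, so $H^2(N,k_{>0})=0$ and $H^2(N,k^*)\cong H^2(N,\mu_2)$; thus $\sigma$ is cohomologous to a cocycle in $Z^2(N,\mu_2)$ and condition~(7) becomes $[\sigma]=[\sigma^g]$ in $H^2(N,\mu_2)$. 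Together with normality of $N$ (condition~(6)) and non-degeneracy this is precisely a real torsor datum $(N,\sigma)$, and $A\simeq\Ind_N^G(A(k_\sigma[N],\gamma))$. In the non-split case $K=k(i)$ is algebraically closed, and Lemma~\ref{Lema:resultado3} does both jobs: every class in $Z^2_S(N,k(i)^*)$ is cohomologous to a $\mu_2$-valued one, and two $\mu_2$-valued cocycles are cohomologous over $k(i)^*$ iff over $\mu_4$; applying this to $(\sigma,\gamma)$ and to condition~(7) yields a semi-real torsor datum with $A\simeq\Ind_S^G(A(k(i)_\sigma[N],\gamma))$. To keep the torsor normalization intact I would perform the reduction through a cochain valued in the $\Gal(k(i)|k)$-fixed subgroup $k_{>0}$, which alters $\sigma$ but leaves $\gamma$ fixed.

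The cocycle bookkeeping is routine; the two places where ``real closed'' is used essentially rather than merely ``formally real'' are the coefficient reductions: the vanishing $H^2(N,k_{>0})=0$ from unique divisibility of the positive cone in the split case, and the algebraic closedness of $k(i)$ that makes Lemma~\ref{Lema:resultado3} available in the non-split case. I expect the genuine point to watch to be the compatibility of shrinking the coefficient group from $K^*$ to $\mu_2$ (respectively $\mu_4$) with the conjugation clause~(7)---that the equivalences $[(\sigma,\gamma)]=[(\sigma^g,\gamma^g)]$ are neither created nor destroyed by the reduction---which is exactly the role the $k_{>0}$-splitting and Lemma~\ref{Lema:resultado3} are designed to play.
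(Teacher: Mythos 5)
Your proposal is correct and follows essentially the same route as the paper's proof: invoke Theorem~\ref{main result 2} to get a torsor datum, use formal reality to force $N$ to be an elementary abelian $2$-group, then split into the cases $K=k$ (where $H^2(N,k^*)\cong H^2(N,\mu_2)$ gives a real torsor datum) and $K=k(i)$ (where Lemma~\ref{Lema:resultado3} gives a semi-real one). In fact you supply details the paper leaves implicit---the Artin--Schreier dichotomy, the unique divisibility of the positive cone justifying $H^2(N,k_{>0})=0$, and the compatibility of the coefficient reduction with condition~(7)---so your write-up is, if anything, more complete.
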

\begin{proof}
Since $k$ is a formally real field the unique primitive root of unity is $-1$. Let $A$ be a $G$-Galois algebra over $k$ such that $|\Aut_G(A)|=|G|$.   By Theorem \ref{main result 2} there exists a torsor datum $(S,K,N,\s,\gamma)$, where $N$ is an abelian group of exponent $2$. Therefore $N$ is an elementary abelian group. 

Since $H^2(N,k^*)\cong H^2(N,\mu_2)$ for any formally real closed field, if $K=k$  the torsor datum defines a real torsor. Now, if $K$ is a proper extension of $k$, then $K=k(i)$ and by Lemma \ref{Lema:resultado3} the torsor datum is equivalent to a semi-real torsor for $G$.
\end{proof}

\begin{corol}\label{Corol Clasification over Q nad Z[1/2]}
Let $G_1$ and $G_2$ be finite groups and $k$ be a formally real field. Then:

\begin{enumerate}
\item $G_1$ and $G_2$ are isocategorical over $k$  if and only if  there is a real or semi-real torsor datum such that $\Aut_{G_1}(A)\cong G_2$, where $A$ is the associated Galois algebra to the datum.

\item $G_1$ and $G_2$ are isocategorical over $k$ if and if they are isocategorical over $\mathbb{Q}$.
\end{enumerate}
\end{corol}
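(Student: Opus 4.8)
The plan is to deduce the statement from the structure theorem over real closed fields, Theorem~\ref{main result 3}, exploiting the key feature that real and semi-real torsor data are independent of the ground field in their combinatorial content: all of their cohomological data take values in $\mu_2$ or $\mu_4\subseteq\mathbb{Q}(i)$, and only the auxiliary extension $K$ records the field. For the backward direction of (1) I first observe that a real torsor datum is a torsor datum via the remark following Definition~\ref{defin: real torsor data}, while a semi-real torsor datum is one by definition; hence Theorem~\ref{main result 2} gives $|\Aut_{G_1}(A)|=|G_1|$, and together with the hypothesis $\Aut_{G_1}(A)\cong G_2$, Proposition~\ref{Prop equivalencias con isocat  existencia bigalois} shows that $G_1$ and $G_2$ are isocategorical over $k$.

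For the forward direction of (1) I would pass to a real closure $R$ of $k$. Base change along $k\hookrightarrow R$ carries a $(G_1,G_2)$-Bigalois algebra to one over $R$, so $G_1$ and $G_2$ stay isocategorical over $R$, and Proposition~\ref{Prop equivalencias con isocat  existencia bigalois} furnishes a $G_1$-Galois algebra $A_R$ over $R$ with $\Aut_{G_1}(A_R)\cong G_2$. Since $R$ is real closed, Theorem~\ref{main result 3} presents $A_R$ through a real or semi-real torsor datum over $R$. The only field-dependent ingredient is $K\in\{R,R(i)\}$; the subgroups $N\subseteq S\subseteq G_1$ and the cocycle $(\sigma,\gamma)$ with values in $\mu_2$, respectively $\mu_4$, are purely combinatorial. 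Replacing $K$ by $k$, respectively $k(i)$, therefore produces a real or semi-real torsor datum over $k$: every clause of Definition~\ref{defin:torsor data} holds verbatim, the decisive constraint $[(\sigma,\gamma)]=[(\sigma^g,\gamma^g)]$ being an identity in $H^2_S(N,\mu_4)$, which does not see the ambient field.

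The remaining and most delicate point is that the $G_1$-Galois algebra $A$ attached to this datum over $k$ still satisfies $\Aut_{G_1}(A)\cong G_2$. Here the idea is that $\Aut_{G_1}(A)$ is computed from the torsor datum by field-independent formulas. Lemma~\ref{Lema:descrip automorfismo simple Galois } describes $\Aut_S(B)$ by pairs $(\eta,\omega)$ with $\eta\in\Hom(N,\mu_2)$ (using that $N$ is elementary abelian of exponent $2$ and that $\partial(\eta)$ must trivialize the $\mu_2$-valued $\sigma$) and $\omega$ ranging over the field-independent group $\Gal(k(i)|k)\cong\mathbb{Z}/2$, or the trivial group in the real case; Proposition~\ref{Prop: exct sequence Aut_G} then assembles $\Aut_{G_1}(A)$ from $\Aut_S(B)$ and $N_{G_1}(S)/S$ by an extension determined by the same data. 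I expect verifying that this extension, and not merely its order, is unchanged by the passage from $R$ to $k$ to be the main obstacle; Lemma~\ref{Lema:resultado3} is exactly what keeps every relevant cocycle inside $\mu_4$, so the isomorphism type of $\Aut_{G_1}(A)$ coincides over $k$ and over $R$, yielding $\Aut_{G_1}(A)\cong G_2$ and completing (1).

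Part (2) then follows formally. The implication from $\mathbb{Q}$ to $k$ is immediate by base change along $\mathbb{Q}\hookrightarrow k$. For the converse, if $G_1$ and $G_2$ are isocategorical over $k$, part (1) produces a real or semi-real torsor datum over $k$ with $\Aut_{G_1}(A)\cong G_2$; transporting its field-independent combinatorial data to $\mathbb{Q}$, with $K=\mathbb{Q}$ or $K=\mathbb{Q}(i)$, gives a real or semi-real torsor datum over $\mathbb{Q}$ carrying the same equivariant automorphism group, so part (1) applied over $\mathbb{Q}$ shows that $G_1$ and $G_2$ are isocategorical over $\mathbb{Q}$.
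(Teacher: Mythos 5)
Your proposal is correct and takes essentially the same route as the paper's own proof: base change to a real closed (formally real closed) extension, apply Theorem \ref{main result 3} to obtain a real or semi-real torsor datum, descend it to $k$ (resp.\ to $\mathbb{Q}$) using that all the cohomological data take values in $\mu_2$ or $\mu_4$, and invoke Lemma \ref{Lema:resultado3} to conclude that the equivariant automorphism group of the $k$-form (resp.\ $\mathbb{Q}$-form) is still $G_2$. The only cosmetic difference is that you deduce part (2) formally from part (1), whereas the paper reads off the rational form $\Ind_S^{G_1}(A(\mathbb{Q}(i)_\sigma[N],\gamma))$ directly from the same base-changed algebra.
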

\begin{proof}
Let $k'$ be a formally real closed extension of $k$.  If $G_1$ and $G_2$ are isocategorical over $k$, then by Proposition \ref{Prop equivalencias con isocat  existencia bigalois} there exists a $G_1$-Galois algebra $A$ such that $\Aut_{G_1}(A)\cong G_2$. The $G_1$-algebra $A'=A\otimes_k k'$ is also Galois and  $\Aut_{G_1}(A')\cong G_2$. By Theorem \ref{main result 3} there exists a real or  semi-real torsor datum such that $A' \simeq \Ind_S^{G_1}(A(k'(i)_\sigma[N],\gamma))$. By the definition of real of semi-real datum, we have that $A'$ has a  $k$-form $D:=\Ind_S^{G_1}(A(k(i)_\sigma[N],\gamma))$ and by Lemma \ref{Lema:resultado3},  $\Aut_{G_1}(D)\cong \Aut_{G_1}(A')\cong G_2$. Therefore $G_1$ and $G_2$ are isocategorical over $k$. For the second part note that also $A'$ has a rational form $L:=\Ind_S^{G_1}(A(\mathbb{Q}(i)_\sigma[N],\gamma))$ and $\Aut_{G_1}(L)\cong \Aut_{G_1}(A')\cong G_2$.
\end{proof}

\section{Galois algebras and  Weil representations}\label{Section:Galois algebras and  Weil representations}

\subsection{ Categorical setting }

Let $\G$ be a small groupoid. We will denote by $\underline{\Aut(\G)}$ the
monoidal groupoid where objects are autoequivalences of $\G$,
morphisms are natural isomorphism and the monoidal structure is given by the
composition of functors.

Given a group $X$, we will denote by $\underline{X}$ the discrete
monoidal category where objects are elements of $X$ and the monoidal structure is given by the product of $X$.

A normalized (right) \textit{action} of a group $X$ on a category $\G$ is a monoidal functor $\rho_*:\underline{X^{op}}\to  \underline{\Aut(\G)}$, where  $X^{op}$ denoted the opposite group of $X$. More concretely, an action of $X$ on $\G$ consists of the following data:

\begin{itemize}
\item functors $(-)^{(x)}:\G\to \G, A\mapsto A^{(x)}$ for all $x\in X$,
\item natural isomorphisms $\gamma_{x,y}^A: A^{(xy)}\to A^{(x)(y)}$ for 	all $x,y\in X, A\in \G$,
\end{itemize} such that $(-)^{1}=\id_\G$, $\gamma_{x,1}^A=\gamma_{1,x}^A=\id_{A^{(x)}}$ for all $x\in X$ and the  diagrams
\begin{equation}\label{2-cociclo G-accion}
\begin{diagram}
\node{ A^{(xyz)}}\arrow{s,l}{ \gamma_{x,yz}^{A}}\arrow{e,t}{\gamma_{xy,z}^A}\node{A^{(xy)(z)} }\arrow{s,r}{(\gamma_{x,y}^{A})^{(z)}}\\
\node{ A^{(x)(yz)}}\arrow{e,t}{\gamma_{y,z}^{A^{(x)}}}
\node{A^{(x)(y)(z)}}
\end{diagram}
\end{equation}commute for all $x,y,z\in G, A\in \G$.

There is two different groupoids associated to a
normalized action of a group $X$ on a groupoid $\G$: The groupoid
of equivariant objects $\G^X$ and the quotient groupoid $\G/\!\!/X$.

\subsubsection{The groupoid of $X$-equivariant objects}
The groupoid $\G^X$ of $X$-equivariant objects  is defined as follows: an object in $\G^X$ is
a pair $(A,u)$, where $A\in \G$ is an object and $u_x: A^{(x)}\to
A$ is a family of morphisms such that $u_1=\id_A$ and the diagrams
\begin{equation}\label{diagrama u_g}
\begin{diagram}
\node{ A^{(xy)}}\arrow{s,l}{ \gamma_{x,y}}\arrow{e,t}{u_{xy}}\node{A }\\
\node{ A^{(x)(y)}}\arrow{e,t}{(u_x)^{(y)}}
\node{A^{(y)}}\arrow{n,r}{u_y}
\end{diagram}
\end{equation} commute for all $x,y\in X$. A morphism from $(A,u)$ to $(A',u')$ is
a morphism $f\in \Hom_\G(A,A')$ such that the diagrams

$$
\begin{diagram}
\node{ A^{(x)}}\arrow{s,l}{f^{(x)}}\arrow{e,t}{u_{x}}\node{A }\arrow{s,r}{f}\\
\node{ A'^{(x)}}\arrow{e,t}{u'_x} \node{A}
\end{diagram}
$$commute for all $x\in X$.

\subsubsection{The quotient groupoid of a $X$-action}
The groupoid $\G/\!\!/X$ has objects Obj$(\G)$ and morphisms
$$\Hom_{\G/\!\!/X}(A,B)=\{(g,\alpha)| g\in X \text{ and } \alpha \in
\Hom_\G(A^{(g)},B)\}.$$ The composition of
$(g,\alpha_g)\in\Hom_{\G/\!\!/X}(A,B)$ and  $(h,\alpha_h)\in
\Hom_{\G/\!\!/X}(B,C)$  is defined by $ (g,\alpha_g)\circ
(h,\alpha_h):=(gh,\alpha_g\odot \alpha_h)\in \Hom_{\G/\!\!/X}(A,C)$
where $\alpha_g\odot \alpha_h: A^{(gh)}\to C$ is defined by the
commutativity of the diagram
$$
\begin{diagram}
\node{ A^{(gh)}}\arrow{s,l}{ \gamma_{g,h}}\arrow{e,t}{\alpha_g\odot \alpha_h}\node{C }\\
\node{ A^{(g)(h)}}\arrow{e,t}{\alpha_g^{(h)}}
\node{B^{(h)}}\arrow{n,r}{\alpha_h}
\end{diagram}
$$ The identity of an object $A$ is given by $(1,\id_A)$ and the
associativity of the composition follows from the naturality of $\gamma$ and the commutativity of the diagrams \eqref{2-cociclo G-accion}:
\begin{align*}
\alpha_g \odot (\alpha_h\odot \alpha_k) &=  \alpha_k\alpha_h^{(k)} \gamma_{h,k}^A\alpha_g^{(hk)}\gamma_{g,hk}^A\\
=&\alpha_k\alpha_h^{(k)}( \gamma_{h,k}^A\alpha_g^{(hk)})\gamma_{g,hk}^A\\
&=  \alpha_k\alpha_h^{(k)}(\alpha_g^{(h)(k)}\gamma_{h,k}^{A^{(g)}})\gamma_{g,hk}^A\\
&=  \alpha_k\alpha_h^{(k)}\alpha_g^{(h)(k)}(\gamma_{h,k}^{A^{(g)}}\gamma_{g,hk}^A)\\
&=  \alpha_k\alpha_h^{(k)}\alpha_g^{(h)(k)}((\gamma_{g,h}^A)^{(k)}\gamma_{gh,k}^A)\\
&= \alpha_k (\alpha_h\alpha_g^{(h)}\gamma_{g,h}^A)^{(k)}\gamma_{gh,k}^A\\
&= \alpha_k (\alpha_g\odot \alpha_h)^{(k)}\gamma_{gh,k}^A\\
&= (\alpha_g\odot\alpha_h )\odot \alpha_k.
\end{align*}

The next proposition gives a complete description of the structure
of the groupoids $\G^X$ and $\G/\!\!/X$.

\begin{prop}\label{Prop estructura grupoide accion  e invariante}
Let $X$ be a group acting on a groupoid $\G$. Then
\begin{itemize}
\item[(a)] The set of isomorphism classes of $\G/\!\!/X$ is the set
of orbits of the $X$-set of isomorphism classes of objects of
$\G$.

\item[(b)] For every $A\in \text{Obj}(\G)$, the sequence
\begin{align}\label{sequence exact of quotient groupoid}
1\to \Aut_\G(A)\to \Aut_{\G/\!\!/X}(A)\to St([A])\to 1
\end{align} is exact. Where $\Aut_\G(A)\to \Aut_{\G/\!\!/X}(A), f\mapsto (1,f)$,
$\Aut_{\G/\!\!/X}(A)\to St([A]), (g,\alpha_g)\mapsto g$ and
$St([A])=\{g\in X| A^{(g)}\cong A\}$.

\item[(c)] Suppose that $St([A])=X$.  There is a bijective
correspondence between splittings of the exact sequence
\eqref{sequence exact of quotient groupoid} and families of morphisms
$\{u_g:A^{(g)}\to A\}_{g\in X}$ such that $(A,u_g)\in \G^{X}$.

\item[(d)] If $(A,u_g)\in \G^{X}$, then $X$ acts by the right on
$\Aut_\G(A)$ by $$f\cdot g:=u_g(
f)^{(g)} u_g^{-1},$$for all $g\in X, f\in \Aut_\G(A)$. Moreover,
$\Aut_{\G^X}((A,u_g))=Aut_\G(A)^{X}$ and the isomorphism classes
of objects in $\G^X$ with underling object $A$ are in
correspondence with elements of the pointed set
$H^1(X,\Aut_\G(A))$.
\end{itemize}
\end{prop}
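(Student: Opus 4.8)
The plan is to verify each of the four statements (a)–(d) directly from the definitions of the two groupoids $\G/\!\!/X$ and $\G^X$ given above. These are essentially bookkeeping statements about equivariant and orbit structures, so the work is mostly a careful unwinding of the composition law in $\G/\!\!/X$ and the equivariance diagrams \eqref{diagrama u_g}. No deep input is needed beyond the associativity calculation already displayed before the statement; the main care is in checking that the maps claimed to be group homomorphisms really are, and that the exact sequences are exact at every spot.

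For part (a), I would observe that by definition $\Hom_{\G/\!\!/X}(A,B)$ is nonempty exactly when $B\cong A^{(g)}$ for some $g\in X$, and every such morphism composed with the isomorphism from $\G$ is invertible in $\G/\!\!/X$. Hence $A$ and $B$ are isomorphic in $\G/\!\!/X$ iff $[B]$ lies in the $X$-orbit of $[A]$ in the set of isomorphism classes of $\G$, giving the desired bijection. For part (b), the map $\Aut_\G(A)\to \Aut_{\G/\!\!/X}(A)$, $f\mapsto(1,f)$, is a homomorphism by the definition of $\odot$ (since $\gamma_{1,1}=\id$), and it is injective because $(1,f)=(1,\id)$ forces $f=\id_A$. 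The map $(g,\alpha_g)\mapsto g$ is a homomorphism by the composition rule $(g,\alpha_g)\circ(h,\alpha_h)=(gh,\alpha_g\odot\alpha_h)$, its image is precisely $\St([A])=\{g\mid A^{(g)}\cong A\}$ since $(g,\alpha_g)$ exists iff $A^{(g)}\cong A$, and its kernel is exactly the pairs $(1,f)$, i.e.\ the image of $\Aut_\G(A)$. This establishes exactness of \eqref{sequence exact of quotient groupoid}.

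For part (c), assuming $\St([A])=X$ so that the sequence is right-exact, a splitting is a homomorphic section $s:X\to\Aut_{\G/\!\!/X}(A)$, $s(g)=(g,u_g)$; I would show that the condition that $s$ be a homomorphism, $s(xy)=s(x)\circ s(y)$, translates under the definition of $\odot$ precisely into the commuting square \eqref{diagrama u_g} defining an equivariant structure $(A,u_g)\in\G^X$, and that $s(1)=\id$ forces $u_1=\id_A$. This gives the claimed bijection. For part (d), I would first check that $f\cdot g:=u_g\,(f)^{(g)}\,u_g^{-1}$ is a right action using the cocycle coherence \eqref{2-cociclo G-accion} together with the equivariance of the $u_g$; this is the same style of manipulation as the associativity computation already carried out. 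Then $\Aut_{\G^X}((A,u_g))$ consists of those $f\in\Aut_\G(A)$ compatible with every $u_g$, which unwinds exactly to the fixed points $\Aut_\G(A)^X$.

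**The hard part will be** the last clause of (d): identifying isomorphism classes of equivariant objects with underlying object $A$ with the pointed cohomology set $H^1(X,\Aut_\G(A))$. Here I would send an equivariant structure $(A,u_g)$ to the function $g\mapsto$ (the discrepancy between $u_g$ and a fixed reference structure), show this discrepancy is a $1$-cocycle for the (generally nonabelian) right action of part (d), and verify that two equivariant structures are isomorphic in $\G^X$ precisely when the associated cocycles differ by a coboundary, i.e.\ are cohomologous. Checking the cocycle identity is where diagram \eqref{diagrama u_g} and the coherence \eqref{2-cociclo G-accion} must be combined, and keeping track of the twisting by $(-)^{(g)}$ in the nonabelian setting is the one genuinely delicate point; everything else is formal.
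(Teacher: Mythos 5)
Your proposal is correct, and for parts (a)--(c) it coincides with the paper, which simply declares those parts immediate from the definitions (your unwinding of the composition law $\odot$ and of diagram \eqref{diagrama u_g} is exactly what ``immediate'' means there). Where you genuinely diverge is part (d). The paper never verifies the right-action axioms or the cocycle conditions by hand: it uses part (c) to regard the given equivariant structure $(A,u_g)$ as a group-theoretic splitting $\hat u\colon X\to \Aut_{\G/\!\!/X}(A)$, $g\mapsto (g,u_g)$, of the exact sequence in (b), and then \emph{defines} the action as conjugation $f\cdot g:=u_{g^{-1}}\odot f\odot u_{g}$ inside the group $\Aut_{\G/\!\!/X}(A)$ --- so the action axioms are automatic --- after which a one-line computation using \eqref{diagrama u_g} (namely $\id_A=u_gu_{g^{-1}}^{(g)}\gamma_{g^{-1},g}^A$) identifies this conjugation with the stated formula $u_gf^{(g)}u_g^{-1}$. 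The $H^1$ classification is then obtained by citing the standard dictionary between the pointed set $H^1(X,N)$ of a split extension $1\to N\to E\to X\to 1$ and $N$-conjugacy classes of splittings, combined with (c). Your route instead proves the action axioms by a diagram chase (for which you need \eqref{diagrama u_g} together with \emph{naturality} of $\gamma_{g,h}$, which is the precise ingredient rather than the coherence \eqref{2-cociclo G-accion} you cite) and builds the $H^1$ bijection directly via discrepancy cocycles $c_g=v_g u_g^{-1}$ measured against the base structure; checking $c_{gh}=c_h(c_g\cdot h)$ and matching coboundaries with isomorphisms in $\G^X$ is exactly the content of the standard fact the paper invokes, re-proved in situ. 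What the paper's approach buys is economy --- the group structure of $\Aut_{\G/\!\!/X}(A)$ and parts (b)--(c) do all the work; what yours buys is self-containedness, at the price of the twisted-cocycle bookkeeping you correctly flagged as the delicate point.
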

\begin{proof}

The parts (a), (b) and (c) are intermediate from the definitions.

Let $(A,u_g)\in \G^X$. The group morphism  $\hat{u} :X\to \Aut_{\G/\!\!/X}(A), g\mapsto (g,u_g)$ is a splitting of the exact sequence \eqref{sequence exact of quotient groupoid}. Therefore, the group $X$ acts (on the right) on $\Aut_\G(X)$ by $$f\cdot g:=u_{g^{-1}}\odot f\odot u_{g}$$for all $g\in G, f\in \Aut_\G(X)$. Using the diagrams \eqref{diagrama u_g}, we have $\id_A=u_gu_{g^{-1}}^{(g)}\gamma_{g^{-1},g}^A$. Hence
\begin{align*}
f\cdot g &:= u_{g^{-1}}\odot f\odot u_{g}\\
 &= u_g f^{(g)}u_{g^{-1}}^{(g)}\gamma_{g^{-1},g}^A\\ 
 &= u_gf^{(g)} u_g^{-1},
\end{align*}for all for all $g\in G, f\in \Aut_\G(A)$. Now is clear that $\Aut_{\G^X}((A,u_g))=Aut_\G(A)^{X}$. 

The pointed set $H^1(X,\Aut_\G(A))$ is in bijective correspondence with the set of conjugacy classes of splittings of the exact sequence \eqref{sequence exact of quotient groupoid}. Therefore,  by (c) and the first part of (d), $H^1(X,\Aut_\G(A))$ is in bijective correspondence with the set of isomorphism classes of objects in $\G^X$.
\end{proof}

\subsection{Actions on the groupoid of $N$-Galois algebras}

\subsubsection{Group extensions}

Let $X$ and $N$ be a groups. An $X$-crossed system over $N$ is a pair of maps 
\begin{itemize}
\item $\cdot :X\times N\to N, (n,x)\mapsto \ ^xn$
\item $\theta:X\times X\to N$
\end{itemize}such that the set $N\times X$ with the product given by $$(n,x)(m,y):=(n(^xm)\theta(x,y),xy), \  \  \text{for all } (n,x), (m,y)\in N\times X$$ is a group with unit $(1,1)$. It is easy to see that a pair $(\cdot, \theta )$ is a $X$-crossed system if and only if 
\begin{itemize}
\item $^x(nm)=\ (^xn)(^ym)$
\item $\theta(x,y)\ ^{xy}n=\ ^x(^yn)\theta(x,y)$
\item $\theta(x,y)\theta(xy,z)=\ ^x\theta(y,z)\theta(x,yz)$
\item $\theta(x,1)=\theta(1,x)=1$
\item $^1n=n$
\end{itemize}for all $x,y,z\in X, m,n\in N$.

We will denote by $ N\#_\theta X$ the group  and will call it a crossed product of $X$ by $N$. Note that we have an exact sequence $$1\to N\to N\#_\theta X\to X\to 1,$$ where $N\to N\#_\theta X, n\mapsto (n,1)$ and $N\#_\theta X\to X, (n,x)\mapsto x$. 

Let $N$ be a finite group and let $k$ be a field. It is well known that every morphism between Galois algebras over $k$ is an isomorphism. Therefore, the category $\mathcal{G}al(N,k)$ of $N$-Galois algebras over a $k$ is a groupoid.

Let $X$ be a finite group and let $(N,\cdot,\theta)$ be an $X$-crossed system. The crossed system defines an action of $X$ on $\mathcal{G}al(N,k)$ as follows. Let $A$ be a $N$-Galois algebra over $k$. Let $A^{(x)}=A$ as algebras with  new $N$-action given by $n\cdot a:=\ ^{x}n\cdot a$, for all $x \in X, n\in N, a\in A$ and  let
\begin{align*}
\gamma_{x,y}: A^{(xy)} &\to\ A^{(x)(y)} \\
a &\mapsto \theta(x,y)a
\end{align*}
for all $a\in A$. It is straightforward to see that the above formulas define a normalized right  $X$-action on $\mathcal{G}al(N,k)$.
\begin{prop}\label{Prop:Isomorfismo Automorfismo Galois y auto grupoide equivariant} Let $X$ be a finite group, $(N,\cdot,\theta)$ be an $X$-crossed system. Let $A\in \mathcal{G}al(N,k)$. Then, there is an isomorphism of exact sequences 
$$
\begin{diagram}
\node{1}\arrow{e,t}{}\node{ \Aut_N(A)}\arrow{s,l}{ \id}\arrow{e,t}{}\node{ \Aut_{N\#_\theta X}(\Ind_N^{N\#_\theta X}(A))} \arrow{s,l}{ \cong}\arrow{e,t}{}\node{X}\arrow{s,l}{ \id}\\
\node{1} \arrow{e,t}{} \node{ \Aut_{\mathcal{G}al(N,k)}(A)}\arrow{e,t}{}
\node{\Aut_{\mathcal{G}al(N,k)/\!\!/X}(A)}\arrow{e,t}{}\node {X}
\end{diagram}$$
where the first sequence is \eqref{primera sucesion exacta} and the second one is \eqref{sequence exact of quotient groupoid}.
\end{prop}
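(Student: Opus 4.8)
The plan is to instantiate Proposition~\ref{Prop: exct sequence Aut_G} with the group $G=N\#_\theta X$, its normal subgroup $N$ (embedded by $n\mapsto(n,1)$), and the $N$-Galois algebra $A$, and then to compare the resulting sequence \eqref{primera sucesion exacta} with the sequence \eqref{sequence exact of quotient groupoid} of Proposition~\ref{Prop estructura grupoide accion e invariante}(b) term by term. Because $N$ is normal in $N\#_\theta X$ we have $N_{N\#_\theta X}(N)/N=X$, so the top row reads $1\to\Aut_N(A)\to\Aut_{N\#_\theta X}(\Ind_N^{N\#_\theta X}(A))\xrightarrow{\pi}X$. The two left-hand terms are literally the same group, $\Aut_N(A)=\Aut_{\mathcal{G}al(N,k)}(A)$, so the left vertical arrow is the identity and the right vertical arrow is $\id_X$; the whole content is to build the middle vertical isomorphism compatibly.

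First I would check that the two notions of twist coincide. Using $\theta(x,1)=\theta(1,x)=1$ and the crossed-system relations, a direct computation gives $(1,g)(n,1)(1,g)^{-1}=({}^g n,1)$ in $N\#_\theta X$; hence the $N$-algebra $A^{((1,g))}$ of Proposition~\ref{Prop: exct sequence Aut_G}, whose action is $n\cdot_{(1,g)}a=((1,g)(n,1)(1,g)^{-1})\cdot a$, is exactly the algebra $A^{(g)}$ attached to the $X$-action. With this identification, the surjectivity analysis in the proof of Proposition~\ref{Prop: exct sequence Aut_G} shows that the image of $\pi$ is $\{g\in X: A^{(g)}\cong A\}=St([A])$, which is also the image of the bottom map $\Aut_{\mathcal{G}al(N,k)/\!\!/X}(A)\to X$; so after restricting the right-hand terms to the common subgroup $St([A])$ both rows become genuine short exact sequences.

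Next I would define the middle map. For $(g,\alpha_g)\in\Aut_{\mathcal{G}al(N,k)/\!\!/X}(A)$, that is $g\in X$ together with an $N$-equivariant algebra isomorphism $\alpha_g\colon A^{(g)}\to A$, set
$$\Phi(g,\alpha_g):=\Ind_N^{N\#_\theta X}(\alpha_g)\circ\psi_{(1,g)},$$
where $\psi_{(1,g)}$ is the isomorphism \eqref{lema isomorfismo Ind_S^G(A) and Ind_g(-1)Sg^G(A^(g))} and $\Ind_N^{N\#_\theta X}(\alpha_g)$ is induction of morphisms (Notation~\ref{nota ind is a functor}); this is the automorphism built in the surjectivity part of Proposition~\ref{Prop: exct sequence Aut_G}. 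By that same computation $\pi(\Phi(g,\alpha_g))$ is the coset of $(1,g)$, i.e.\ $g\in X$, so the right square commutes, and $\Phi(1,f)=\Ind_N^{N\#_\theta X}(f)$ is precisely the image of $f\in\Aut_N(A)$ under induction, so the left square commutes.

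The hard part will be verifying that $\Phi$ intertwines the composition $\odot$ of the quotient groupoid with composition of $G$-algebra automorphisms. The key point is that the chosen section $x\mapsto(1,x)$ of $N\#_\theta X\to X$ fails to be multiplicative exactly by $\theta$: $(1,g)(1,h)=(\theta(g,h),gh)$. This failure is measured by the structure isomorphism $\gamma_{g,h}\colon A^{(gh)}\to A^{(g)(h)}$, which is multiplication by $\theta(g,h)$ and which is the very morphism entering the definition of $\alpha_g\odot\alpha_h$. I would therefore expand the composite $\Phi(g,\alpha_g)\Phi(h,\alpha_h)$ using naturality of the maps $\psi_{(1,\cdot)}$ together with the comparison of $\psi_{(1,g)}\psi_{(1,h)}$ with $\psi_{(1,g)(1,h)}=\psi_{(\theta(g,h),gh)}$, and match it against $\Phi(gh,\alpha_g\odot\alpha_h)$ after transporting everything through the functor $\Ind_N^{N\#_\theta X}$; this is the same bookkeeping already carried out for the associativity of $\odot$ preceding Proposition~\ref{Prop estructura grupoide accion e invariante} and in part~(d) of that proposition. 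Once $\Phi$ is known to be a homomorphism commuting with both short exact sequences and equal to the identity on the outer terms, the short five lemma yields that it is an isomorphism, giving the claimed isomorphism of exact sequences.
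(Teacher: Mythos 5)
Your proposal is correct and follows essentially the same route as the paper: the paper's proof constructs exactly your map $\Phi$ (there written $F(\alpha_x,x)=\Ind(\alpha_x)\circ\psi_x$, where $\psi_x$ coincides with $\psi_{(1,x)}$ under the identification $A^{((1,x))}=A^{(x)}$ that you verify), and establishes the homomorphism property by the explicit $\theta$-bookkeeping you outline, using that $(1,x)(1,y)=(\theta(x,y),xy)$ produces precisely the structure map $\gamma_{x,y}$ entering $\alpha_x\odot\alpha_y$. The only difference is cosmetic and occurs at the last step: where you cut both rows down to $St([A])$ and invoke the short five lemma, the paper instead gets injectivity of $F$ from the proof of Proposition \ref{Prop: exct sequence Aut_G} and proves surjectivity directly by evaluating a given automorphism $W$ on the idempotents $\chi_1^x$ and exhibiting $W=F(\alpha_x,x)$.
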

\begin{proof}

Let  $G:= N\#_\theta X$, the crossed product group. Let $A$ be a $N$-Galois algebra. For each $x\in X$ the map \begin{align*}
  \psi_x:\Ind_N^G(A)&\to \Ind_{N}^G(A^{(x)})\\
f&\mapsto \psi_x(f)=[(n,z)\mapsto f(^xn \theta(x,z),xz))], 
\end{align*} defines a $G$-algebra isomorphism. The map 
\begin{align*}
F:\Aut_{\mathcal{G}al(N,k)/\!\!/X}(A) &\to \Aut_{N\#_\theta X}(\Ind_N^{N\#_\theta X}(A))\\
(\alpha_x,x) &\mapsto \Ind(\alpha_x)\circ\psi_x,
\end{align*}is a morphism of groups. In fact, 
\begin{align*}
[F(\alpha_y,y)\circ F(\alpha_x,x)] (f)(n,z) &=[\Ind(\alpha_y)\psi_y \Ind(\alpha_x)\psi_x] (f) (n,z)\\
&=[\Ind(\alpha_y)\Ind(\alpha_x)\psi_x] (f) (^yn\theta(y,z),yz)\\
&=[\Ind(\alpha_y)\Ind(\alpha_x)] (f) (^x(^yn))^x\theta(y,z)\theta(x,yz),xyz)\\
&=[\Ind(\alpha_y)\Ind(\alpha_x)] (f)( ^x(^yn)^x\theta(y,z)\theta(x,yz),xyz)\\
&=[\Ind(\alpha_y)\Ind(\alpha_x)] (f)( ^x(^yn)\theta(x,y)\theta(xy,z),xyz)\\
&=[\Ind(\alpha_y)\Ind(\alpha_x)] (f)( \theta(x,y)(^{xy}n)\theta(xy,z),xyz)\\
&=[\Ind(\alpha_y)\Ind(\alpha_x)\Ind(\gamma_{x,y})] (f)( ^{xy}n\theta(xy,z),xyz)\\
&=[\Ind(\alpha_y)\Ind(\alpha_x)\Ind(\gamma_{x,y})\psi_{xy}] (f)( n,z)\\
&=[\Ind(\alpha_y\alpha_x)\Ind(\gamma_{x,y})\psi_{xy}] (f)( n,z)\\
&=[\Ind(\alpha_y\alpha_x\gamma_{x,y})\psi_{xy}] (f)( n,z)\\
&= F(\alpha_x\odot \alpha_y,xy)(f)(n,z)
\end{align*}for all $(n,z)\in N\#_\theta X$.

It follows from the proof of Proposition \ref{Prop: exct sequence Aut_G} that $F$ is injective.  We claim that $F$ is surjective.  Let $W\in \Aut_{N\#_\theta X}(\Ind_N^{N\#_\theta X}(A))$. There is a unique $x\in X$ such that $W(\chi_1^x)=\chi_1^1$,  so $W\psi_x^{-1}(\chi_1^y)=\chi_1^y$ for all $y\in X$. If $\alpha_x=(W\psi_x^{-1})|_A$, then $W\psi_x^{-1}=\Ind(\alpha_x)$, thus $W=F(\alpha_x,x)$.
\end{proof}

\subsection{The Finite  Weil representation associated to a simple Galois algebra}

Let $S$ be a  finite abelian group and $A$ be an  $S$-Galois algebra. Then,  $\Aut_S(A)$ is an abelian group. Let   $$\operatorname{St}(A)= \{g\in \Aut(S)| A^{(g)}\cong A \text{ as $S$-algebras}\}.$$   
\begin{prop-def}\label{weil action}
Let $S$ be a  finite abelian group and $A$ be an  $S$-Galois algebra. If we choose an isomorphism of $S$-algebras $\alpha_g:A^{(g)}\to A$ for each $g\in \St(A)$, then
\begin{align*}
g\cdot f &:= \alpha_g^{-1} f \alpha_{g},\\
\theta(x,y) &:= \alpha_x^{-1}\alpha_y^{-1}\alpha_{xy} \in \Aut_S(A).
\end{align*}define a crossed system of $\operatorname{St}(A)$ over $\Aut_S(A)$. The crossed product  $\Aut_S(A)\#_\theta \operatorname{St}(A)$ acts on $A$ by algebra automorphisms  as  
\begin{equation}\label{ecuacion de la accion de Weil}
(\psi,x)\cdot a=\psi(\alpha_{x}^{-1}(a)).
\end{equation}This action  will be called the \textit{Weil action}.  If the cohomology class  $$\theta \in Z^2(\St(A),\Aut_S(S))$$ is zero,  $\St(A)$ acts on $A$  and this action will be also called   the \textit{Weil action}.
\end{prop-def}
\begin{proof}
Straightforward.
\end{proof}

If $A$ is a simple algebra, once a simple $A$-module $M$ is fixed, there is a canonical isomorphism $A\cong M_n(D)$, where $D=\End_A(M)$.  Using the Skolem-Noether theorem, the Weil action defines and is defined by  a  unique (up to isomorphism) projective representation  $\rho :\Aut_S(A)\#_\theta \operatorname{St}(A)\to \operatorname{PGL}(M):= \operatorname{GL}_k(M)/k^*$ by the equation $$\alpha_g(f)(v)=\rho_gf\rho_g^{-1}(v)$$ for all $f\in M_n(D), v\in M$. 

\begin{ejem}\label{cannoical weil}
Let $V$ be an abelian group of odd order and $\omega \in \operatorname{Hom}(\wedge^2 V,k^*)$ a non-degenerate skew-symmetric bicharacter. Let $A=k_\omega[V]$ be the twisted group algebra.  Then $A$ is a $V$-Galois algebra,  \begin{equation}\label{definicion de grupo simplectico}
\Sp(V,\omega)=\{g\in \Aut(V)| \omega(x,y)=\omega(g(x),g(y))\}= \St(A)
\end{equation} and $\Aut_V(A)=\widehat{V}$. The group $\Sp(V,\omega)$ acts on $A$ by $\alpha_g:A\to A, u_x\mapsto u_{g(x)}$. Fixing a Lagrangian decomposition of $V=U\oplus W$,  $A$ acts on $M:=\text{Span} (\{t_b| b\in U\})$ by $u_{x\oplus y}\cdot t_b=\omega(y,b)t_{x+b}$ and the associated representation corresponds to the usual Weil representation.
\end{ejem}

\begin{obs}
The Weil action defined in Example \ref{cannoical weil} can be seen as the \textit{canonical Weil representation} because it is defined only in terms of the pair $(V,\omega)$. 
\end{obs}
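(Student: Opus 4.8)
The plan is to show that every datum feeding Proposition-Definition \ref{weil action} in the situation of Example \ref{cannoical weil} is manufactured from the pair $(V,\omega)$ alone, and that the single auxiliary choice needed to pass from the algebra action to the associated projective representation — a Lagrangian decomposition — is immaterial up to isomorphism. First I would observe that the algebra $A=k_\omega[V]$, the stabilizer $\St(A)=\Sp(V,\omega)$, and the group $\Aut_V(A)=\widehat{V}$ are each given by explicit formulas in $V$ and $\omega$, exactly as recorded in Example \ref{cannoical weil}. The only ingredient that Proposition-Definition \ref{weil action} leaves to a choice is the family of $S$-algebra isomorphisms $\alpha_g\colon A^{(g)}\to A$; here Example \ref{cannoical weil} singles out the distinguished one, $\alpha_g(u_x)=u_{g(x)}$, which is well defined precisely because $g\in\Sp(V,\omega)$ preserves $\omega$ and hence the multiplication of $A$. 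Thus no external choice remains in the construction of the action.

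Next I would check that this distinguished family composes strictly: $\alpha_g\alpha_h(u_x)=u_{g(h(x))}=u_{(gh)(x)}=\alpha_{gh}(u_x)$, so $\alpha_g\alpha_h=\alpha_{gh}$. Hence the $\alpha_g$ define an honest action of $\St(A)=\Sp(V,\omega)$ on $A$ by algebra automorphisms, the obstruction class $\theta$ of Proposition-Definition \ref{weil action} vanishes, and the crossed product $\Aut_V(A)\#_\theta\St(A)$ reduces to the semidirect product $\widehat{V}\rtimes\Sp(V,\omega)$, with $\widehat{V}$ acting by $u_x\mapsto\psi(x)u_x$ and with $\Sp(V,\omega)$ acting on $\widehat{V}$ by precomposition. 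The resulting Weil action $(\psi,g)\cdot a=\psi(\alpha_g^{-1}(a))$ is therefore a genuine action of $\widehat{V}\rtimes\Sp(V,\omega)$ on $A$ expressed entirely in terms of $(V,\omega)$.

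Finally I would promote the algebra action to the projective representation and argue that it too is canonical. Since $\omega$ is non-degenerate and $|V|$ is odd, for a central element $u_x$ one has $\omega(x,y)^2=1$ for all $y$, which over roots of unity of odd order forces $\omega(x,-)=1$ and hence $x=0$; thus the center of $A=k_\omega[V]$ reduces to $k$, so $A$ is central simple and $A\cong\End_D(M)$ for its unique-up-to-isomorphism simple module $M$, with $D=\End_A(M)$. In particular the module $M=\text{Span}\{t_b\}_{b\in U}$ built from a Lagrangian $U$ depends on $U$ only up to isomorphism, any two choices yielding isomorphic simple $A$-modules. Invoking the Skolem--Noether theorem as in the paragraph preceding Example \ref{cannoical weil}, the action of $\widehat{V}\rtimes\Sp(V,\omega)$ by algebra automorphisms lifts to a projective representation $\rho$ on $M$, unique up to a scalar; combined with the isomorphism of the modules attached to different Lagrangians, $\rho$ is independent of the Lagrangian up to isomorphism and determined by $(V,\omega)$ alone, which is the assertion. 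I expect the only point requiring care to be precisely this last step — that distinct Lagrangian decompositions produce isomorphic projective representations — which I would settle through uniqueness of the simple module over the simple algebra $A$ together with Skolem--Noether, rather than by comparing the two module structures by hand.
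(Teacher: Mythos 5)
Your proposal is correct and follows essentially the same route the paper intends: the remark carries no separate proof because Example \ref{cannoical weil} already exhibits the canonicity you verify (the $2$-cocycle is $\omega$ itself, the distinguished isomorphisms $\alpha_g(u_x)=u_{g(x)}$ compose strictly so $\theta$ is trivial and the crossed product collapses to $\widehat{V}\rtimes\Sp(V,\omega)$), while the Skolem--Noether paragraph preceding the example supplies exactly the uniqueness-up-to-isomorphism of the projective representation that you use to dispose of the Lagrangian choice. The one cosmetic slip is deducing ``central simple'' from the center computation alone --- centrality does not by itself give simplicity --- but simplicity of $k_\omega[V]$ is already guaranteed by the paper's framework (a non-degenerate cocycle yields a simple Galois algebra, Theorem \ref{main result}), which you could cite instead.
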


\subsection{The Weil representation of a  Symplectic module}

Let $V$ be an abelian group. A  skew-symmetric form on $V$ is a bicharacter $\omega: V\times V\to \mathbb{C}^*$ such that $\omega(v,v)=1$ for all  $v\in V$. We will denote by $\bigwedge^2 \widehat{V}$ the abelian group of all skew-symmetric forms on $V$.

A symplectic module  is a pair $(V,\omega)$, where $V$ is a finite abelian group and $\omega$ is a non-degenerate skew-symmetric form. We define $\Sp(V,\omega)$ the symplectic group of $(V,\omega)$ by \eqref{definicion de grupo simplectico}.
\begin{ejem}\label{Ejem:smplectico lineal a simplecto modulo}
Let $\mathbb{F}_{p^n}$ be the finite field with $p^n$ elements. Let $V$ be an $\mathbb{F}_{p^n}$-vector space and  $\langle-,-\rangle:V\times V\to \mathbb{F}_{p^n}$  a  symplectic $\mathbb{F}_{p^n}$-bilinear form. Using the trace  map $\operatorname{Tr}: \mathbb{F}_{p^n}\to \mathbb{F}_{p}, x\mapsto x+x^p+\cdots x^{p-1}$ we define the skew-symmetric form
\begin{align*}
\omega: V\times V&\to \mathbb{C}^*\\
(v,w)&\mapsto e^{\frac{ 2\pi i\operatorname{Tr}(\langle v,w\rangle)}{p} }.
\end{align*}
\begin{prop}\label{Prop:ejem simplec lineal simplec module}
The pair  $(V,\omega)$ is a symplectic module and the linear symplectic group $\Sp_k(V)$ is a subgroup of $\Sp(V,\omega)$.
\end{prop}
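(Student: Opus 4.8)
The plan is to verify directly the three defining properties of a symplectic module for $(V,\omega)$ — namely that $\omega$ is a bicharacter, that it is alternating, and that it is non-degenerate — and then to observe that the inclusion $\Sp_k(V)\subseteq \Sp(V,\omega)$ is formal. Here $\Sp_k(V)$ denotes the linear symplectic group of the $\mathbb{F}_{p^n}$-symplectic space $(V,\langle-,-\rangle)$, while $\Sp(V,\omega)$ is the group of automorphisms of the underlying abelian group preserving $\omega$, as in \eqref{definicion de grupo simplectico}.

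For the bicharacter property I would use that $\langle-,-\rangle$ is $\mathbb{F}_{p^n}$-bilinear, hence bi-additive, and that both the trace map $\operatorname{Tr}\colon\mathbb{F}_{p^n}\to\mathbb{F}_p$ and the character $t\mapsto e^{2\pi i t/p}$ of $\mathbb{F}_p$ are homomorphisms; composing these yields $\omega(v_1+v_2,w)=\omega(v_1,w)\omega(v_2,w)$ and likewise in the second variable. The alternating property $\omega(v,v)=1$ is immediate from $\langle v,v\rangle=0$ (as $\langle-,-\rangle$ is symplectic) together with $\operatorname{Tr}(0)=0$.

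The key step is non-degeneracy. Suppose $\omega(v,w)=1$ for all $w\in V$; the goal is to conclude $v=0$. The hypothesis says $\operatorname{Tr}(\langle v,w\rangle)=0$ in $\mathbb{F}_p$ for every $w$. If $v\neq 0$, then since $\langle-,-\rangle$ is a non-degenerate $\mathbb{F}_{p^n}$-bilinear form the functional $\langle v,-\rangle\colon V\to\mathbb{F}_{p^n}$ is a non-zero $\mathbb{F}_{p^n}$-linear map, hence surjective, so $\langle v,w\rangle$ ranges over all of $\mathbb{F}_{p^n}$ as $w$ varies. This would force $\operatorname{Tr}$ to vanish identically on $\mathbb{F}_{p^n}$, contradicting the standard fact that the trace of the separable extension $\mathbb{F}_{p^n}/\mathbb{F}_p$ is surjective (equivalently, the trace form is non-degenerate). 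Hence $v=0$, and $\omega$ is non-degenerate. This is the only point where the arithmetic of finite fields, rather than pure linear algebra, intervenes, so I expect it to be the main (though still short) obstacle.

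Finally, for the inclusion, let $g\in\Sp_k(V)$, i.e.\ an $\mathbb{F}_{p^n}$-linear automorphism of $V$ with $\langle g(x),g(y)\rangle=\langle x,y\rangle$ for all $x,y\in V$. Being $\mathbb{F}_{p^n}$-linear, $g$ is in particular an automorphism of the underlying abelian group $V$; and applying $\operatorname{Tr}$ and the exponential to the identity $\langle g(x),g(y)\rangle=\langle x,y\rangle$ gives $\omega(g(x),g(y))=\omega(x,y)$. By the description \eqref{definicion de grupo simplectico} this means $g\in\Sp(V,\omega)$, whence $\Sp_k(V)\subseteq\Sp(V,\omega)$.
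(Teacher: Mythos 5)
Your proposal is correct and follows essentially the same route as the paper: the bicharacter and alternating properties are routine, and non-degeneracy is reduced to the non-degeneracy (equivalently, non-vanishing) of the trace $\operatorname{Tr}\colon\mathbb{F}_{p^n}\to\mathbb{F}_p$ by moving scalars through the second argument of $\langle-,-\rangle$ — the paper does this pointwise via $\operatorname{Tr}(c\langle v,w\rangle)=\operatorname{Tr}(\langle v,cw\rangle)$ and the trace form, while you phrase it contrapositively via surjectivity of the functional $\langle v,-\rangle$, which is the same argument. Your explicit check of the inclusion $\Sp_k(V)\subseteq\Sp(V,\omega)$, which the paper omits as immediate, is also fine.
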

\begin{proof}
It is clear that $\omega \in \bigwedge^2 \widehat{V}$, so we only will see that $\omega$ is non-degenerate. Suppose that $\omega(v,w)= 1$ for all $w\in V$, so $\operatorname{Tr}(\langle v,w\rangle)=0$  for all $y\in V$. Therefore,  $\operatorname{Tr}(c\langle v,w\rangle)=\operatorname{Tr}(\langle v,cw\rangle)=0$ for all $c\in \mathbb{F}_{p^n}$. Since the bilinear form $\mathbb{F}_{p^n}\times \mathbb{F}_{p^n}\to \mathbb{F}_{p}, (a,b)\mapsto \operatorname{Tr}(ab)$ is non-degenerate, then $\langle v,w\rangle=0$ for all $w\in V$. Finally, since $\langle-,-\rangle$ is non-degenerate $v=0$, so $\omega$ is a non-degenerate.
\end{proof}
\end{ejem}

Let $(V,\omega)$ be a symplectic module. By Proposition \ref{Prop:exis de 2-cociclo para simplectco} there exists $\alpha\in Z^2(V,\mathbb{C}^*)$ such that $\omega =\operatorname{Alt}(\alpha)$  and $$\Sp(V,\omega)=\{g\in \Aut(V)| \exists \eta \in C^1(V,\mathbb{C}^*): \frac{\alpha^g}{\alpha}=\delta(\eta) \}.$$

The tuple $(V,k,V,\alpha,\eta_\alpha)$ is a torsor datum. Hence, $A=\mathbb{C}_{\alpha}[V]$ is a simple $V$-Galois algebra and  by Lemma \ref{Lema:descrip automorfismo simple Galois }  $$\Aut_V(A)= \widehat{V}.$$ 

Let us fix for every $g\in \Sp(V,\omega)$ a cochain $\eta_g\in C^1(V,\mathbb{C}^*)$ such that $\frac{\alpha^g}{\alpha}=\delta(\eta_g)$. The map $\alpha_g: A\to A^{(g)}, u_x\mapsto \eta_g(x)u_{g(x)}$ is an isomorphism of $V$-algebras. Thus, by Proposition \ref{weil action} there is an associated crossed product group  $$\ASp(V,\omega):=\widehat{V}\#_{\theta} \Sp(V,\omega).$$ Following \cite{isocategorical} we call $\ASp(V,\omega)$ the  \emph{affine pseudo-symplectic group}.

The Weil representation of the affine pseudo-symplectic group $\ASp(V,\omega)$ is the projective representation associated to the Weil action of $\ASp(V,\omega)$ on the simple algebra $A=\mathbb{C}_{\alpha}[V]$, see Proposition \ref{weil action}.

\subsection{The Weil representation associated to a  quadratic module }

\begin{defin}
An abelian group $V$ isomorphic to  $(\mathbb{Z}/n\mathbb{Z})^r$ will be called a  homogeneous module. The number  $r$ is  called the rank and $n$ the exponent of $V$. 
\end{defin}
We will denote by $\mu_n$ the group of all root of unity of order $n$ in $\mathbb{C}$.

A quadratic  form on a homogeneous module $V$ of exponent $n$  is a  map $q:V\to \mu_n$ such that  $q(x)=q(x^{-1})$  and the map 
\begin{align*}
\omega_q:V\times V &\to \mathbb{C}^*\\
(x,y)&\mapsto \frac{q(xy)}{q(x)q(y)} 
\end{align*}
is a bicharacter.  The group of all quadratic forms on a homogeneous module $V$ will be denoted by $\Quad(V)$. 
\begin{defin}
A quadratic module is a pair $(V,q)$, where $V$ is a finite homogeneous module and $q$ is a quadratic form on $V$ such that the associated skew-symmetric form $\omega_q$ is non-degenerate.
\end{defin}
Given a quadratic module $(V,q)$ the  orthogonal group  of $(V,q)$ is defined as $$O(V,q)=\{g \in \Aut(V) |  q(x)=q(g(x)), \  \forall x\in V\}.$$ 
\begin{ejem}\label{Ejemplo: modulo cuadratico de spacop cuadratico}
Let $k$ be field and $V$ a finite dimensional $k$-vector space. Recall that a (usual) quadratic form is a function $q:V\to k$ such that $q(ax+by)=a^2q(x)+abB(x,y)+ b^2q(y)$ for all $a,b \in k, x, y\in V$, where $B$ is a $k$-bilinear form on $V$. The form $q$ is called nondefective if the bilinear form $B$ is non-degenerate. Note that if $k$ has characteristic different from 2, the quadratic form is totally determined by $B$. A quadratic (linear) space is a pair $(V,q)$ where $V$ is a linear space and $q$ is a quadratic form whose associated bilinear form is non-degenerate. Given a linear quadratic module $(V,q)$ the linear orthogonal group  of $(V,q)$ is defined as $$O_k(V,q)=\{g \in \operatorname{GL}_k(V) |  q(x)=q(g(x)), \  \forall x\in V\}.$$ 

Let $V$ be an $\mathbb{F}_{p^n}$-vector space and  $q:V\to \mathbb{F}_{p^n}$  a  quadratic form. Le us define the quadratic form 
\begin{align*}
\widehat{q}: V&\to \mu_p\\
v&\mapsto e^{\frac{2\pi i\Tr(q(v))}{p}}.
\end{align*}
\begin{prop}
If $q$ is nondefective then the pair $(V,\widehat{q})$ is a quadratic module of exponent $p$ and the linear orthogonal group $O_{\mathbb{F}_{p^n}}(V,q)$ is a subgroup of $O(V,\widehat{q})$.
\end{prop}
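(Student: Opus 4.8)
The plan is to verify the two claims separately, both reducing to the non-degeneracy argument already carried out in Proposition \ref{Prop:ejem simplec lineal simplec module}. First I would check that $\widehat{q}$ is a well-defined quadratic form on $V$ regarded as a finite abelian group (a homogeneous module of exponent $p$, since $V$ is an $\mathbb{F}_{p^n}$-vector space and the additive group of $\mathbb{F}_{p^n}$ is $(\mathbb{Z}/p\mathbb{Z})^n$). The symmetry condition $\widehat{q}(v)=\widehat{q}(-v)$ is immediate because $q(-v)=q(v)$ for a linear quadratic form. The main point is that the associated form $\omega_{\widehat{q}}$ is a bicharacter: a direct computation gives
\begin{equation*}
\omega_{\widehat{q}}(v,w)=\frac{\widehat{q}(v+w)}{\widehat{q}(v)\widehat{q}(w)}=e^{\frac{2\pi i\Tr(q(v+w)-q(v)-q(w))}{p}}=e^{\frac{2\pi i\Tr(B(v,w))}{p}},
\end{equation*}
using the defining relation $q(v+w)=q(v)+B(v,w)+q(w)$. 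Since $\Tr\circ B$ is $\mathbb{Z}$-bilinear into $\mathbb{F}_p$, the exponential is a bicharacter, so $\widehat{q}\in\Quad(V)$.

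Next I would establish non-degeneracy of $\omega_{\widehat{q}}$, which is exactly where the hypothesis that $q$ is nondefective enters. The computation above identifies $\omega_{\widehat{q}}$ with the skew-symmetric form attached to the $\mathbb{F}_{p^n}$-bilinear form $B$ via the trace, and this is precisely the situation treated in the proof of Proposition \ref{Prop:ejem simplec lineal simplec module}: if $\omega_{\widehat{q}}(v,w)=1$ for all $w$, then $\Tr(B(v,w))=0$ for all $w$, hence $\Tr(cB(v,w))=\Tr(B(v,cw))=0$ for all $c\in\mathbb{F}_{p^n}$; since the trace pairing $(a,b)\mapsto\Tr(ab)$ on $\mathbb{F}_{p^n}$ is non-degenerate, this forces $B(v,w)=0$ for all $w$, and nondefectiveness of $q$ then gives $v=0$. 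Thus $\omega_{\widehat{q}}$ is non-degenerate and $(V,\widehat{q})$ is a quadratic module.

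Finally, for the subgroup statement I would take $g\in O_{\mathbb{F}_{p^n}}(V,q)$, so $g$ is $\mathbb{F}_{p^n}$-linear with $q(g(x))=q(x)$ for all $x$. Then $g$ is in particular a $\mathbb{Z}$-linear automorphism of the underlying abelian group, and $\widehat{q}(g(x))=e^{2\pi i\Tr(q(g(x)))/p}=e^{2\pi i\Tr(q(x))/p}=\widehat{q}(x)$, so $g\in O(V,\widehat{q})$. This inclusion is clearly a group homomorphism, completing the argument. The essentially routine parts are the bicharacter verification and the subgroup inclusion; the only step requiring the nondefective hypothesis is the non-degeneracy of $\omega_{\widehat{q}}$, and I expect that to be the main (though not difficult) obstacle, handled by borrowing the trace-pairing argument verbatim from the symplectic case.
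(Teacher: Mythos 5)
Your proposal is correct and follows essentially the same route as the paper: verify the symmetry $\widehat{q}(-v)=\widehat{q}(v)$, compute $\omega_{\widehat{q}}(x,y)=e^{2\pi i\Tr(B(x,y))/p}$ from the defining relation of $q$, and reduce non-degeneracy to the trace-pairing argument of Proposition \ref{Prop:ejem simplec lineal simplec module}. The only cosmetic differences are that the paper cites that proposition rather than re-running its argument inline, and it leaves the (routine) subgroup inclusion implicit, which you spell out.
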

\begin{proof}
For all $v\in V$, $$\widehat{q}(-v)=e^{\frac{2\pi i\Tr(q(-v))}{p}}= e^{\frac{2\pi i(-1)^2\Tr(q(v))}{p}}=e^{\frac{2\pi i\Tr(q(v))}{p}}=\widehat{q}(v).$$
Let $B$ be  the $\mathbb{F}_{p^n}$-bilinear form associated to $q$. Then  $$\omega_{\widehat{q}}(x,y)= \frac{\widehat{q}(x+y)}{\widehat{q}(x)\widehat{q}(y)}= e^{\frac{2\pi i\Tr(B(x,y)) }{p} }$$ for all $x,y\in V$. Thus, $\omega_{\widehat{q}}$ is a skew-symmetric form. It follows from Proposition \ref{Prop:ejem simplec lineal simplec module} that  $\omega_{\widehat{q}}$ is non-degenerate.
\end{proof}
\end{ejem}

For a homogeneous module $V$  we  denote by $\Bil(V)$ the abelian group of all bicharacter with values in $\mathbb{C}^*$. 
The map $\operatorname{Tr}:\Bil(V)\to \Quad(V), \Tr(b)(x)=b(x,x)$ defines a morphism of groups with kernel $\bigwedge^2 \widehat{V}$.

\begin{lem}\label{Lemma1:cohomologia ortogonales}
Let $V$ be a finite homogeneous module.  The sequence $$0\to \bigwedge^2 \widehat{V} \to \Bil(V) \to \Quad(V)\to 1$$ is  exact.
\end{lem}
\begin{proof}
It is clear that we only need to show that the trace map $$\Tr: \Bil(V) \to \Quad(V)$$ is surjective.   If $V=\langle g \rangle$ has rank one (that is, $V$ is a cyclic group of order $n$)  and $q\in \Quad(V)$, then  $b:V\times V\to \mu_n$, $b(g^i,g^j)= q(g)^{ij}$ is a bilinear map with $\Tr(b)=q$.  Let $V=A\oplus C$ where $C$ has rank one,  $q\in \Quad (V)$ and suppose that for $q|_A$ and $q|_C$ we have $b_A \in \Bil(V)$, $b_C\in \Bil(C)$ with $\Tr(b_A)=q|_A$, $\Tr(b_C)=q|_C$. Then  
\begin{align*}
b:V\times V &\to \mu_n,\\
a\oplus c, a'\oplus c' &\mapsto b_A(a,a')b_C(c,c')\omega_q(a,c')
\end{align*}
is a bichartacter with $\Tr(b)=q$. 
\end{proof}

Let $V$ be a finite homogeneous module of exponent $n$. We define $$C_0^1(V,\mu_n)=\{\eta\in C^1(V,\mu_n)| \delta(\eta) \text{ is bicharacter and } \eta(x^2)=\eta(x)^2,  \forall x\in V\},$$ and  the morphism
\begin{align*}
\delta:C_0^1(V,\mu_n)&\to  \bigwedge^2 \widehat{V}\\
\eta&\mapsto \delta(\eta).
\end{align*}
\begin{lem}\label{Lemma2:cohomologia ortogonales}
The sequence $$0\to  \widehat{V} \to C_0^1(V,\mu_n) \to \bigwedge^2 \widehat{V}\to 1$$ is  exact. 
\end{lem}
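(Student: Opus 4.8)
The plan is to establish the three exactness assertions separately, in close parallel with the proof of Lemma \ref{Lemma1:cohomologia ortogonales}, after first checking that both maps have the claimed sources and targets. A character $\chi\in\widehat{V}$ is a homomorphism, so $\delta(\chi)=1$ is a (trivial) bicharacter and $\chi(x^2)=\chi(x)^2$; hence $\chi\in C_0^1(V,\mu_n)$ and the left-hand map is the inclusion. Conversely, for $\eta\in C_0^1(V,\mu_n)$ the value $\delta(\eta)(x,x)=\eta(x)^2/\eta(x^2)$ equals $1$ by the defining condition $\eta(x^2)=\eta(x)^2$, so $\delta(\eta)$ is an \emph{alternating} bicharacter, i.e. a genuine element of $\bigwedge^2\widehat{V}$. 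Thus both arrows land where they should.

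Injectivity of $\widehat{V}\to C_0^1(V,\mu_n)$ is immediate, being the inclusion of functions. For exactness at the middle term I would compute $\ker\delta$ directly: $\delta(\eta)=1$ means $\eta(xy)=\eta(x)\eta(y)$ for all $x,y$, i.e. $\eta$ is a homomorphism $V\to\mu_n$ (a homomorphism out of the exponent-$n$ group $V$ automatically takes values in $\mu_n$). Therefore $\ker\delta=\widehat{V}$, which is exactly the image of the first map, giving exactness in the middle.

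The remaining and decisive point is surjectivity of $\delta$, the analogue of the surjectivity of $\Tr$ established in Lemma \ref{Lemma1:cohomologia ortogonales}. Given an alternating bicharacter $\omega\in\bigwedge^2\widehat{V}$, the aim is to produce an explicit quadratic refinement $\eta$ with $\delta(\eta)=\omega$. Fixing an ordered basis $e_1,\dots,e_r$ of $V$ and writing $x=\sum_i x_i e_i$, a natural candidate is
\[ \eta(x)=\prod_{i<j}\omega(e_i,e_j)^{\,x_i x_j}. \]
One then verifies that $\delta(\eta)$ is a bicharacter whose values on basis pairs agree with $\omega$, so that $\delta(\eta)=\omega$ since both are bicharacters determined by their values on the pairs $(e_i,e_j)$, and that the squaring condition $\eta(x^2)=\eta(x)^2$ is preserved. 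I expect this last step to be the real obstacle: the triangular refinement must be chosen so that its coboundary recovers the full alternating form while keeping $\eta\in C_0^1(V,\mu_n)$, exactly in the spirit of how the cross-term $\omega_q(a,c')$ is engineered in the inductive construction of Lemma \ref{Lemma1:cohomologia ortogonales}. Organizing this as an induction on the rank of $V$ through a decomposition $V=A\oplus C$ with $C$ cyclic, where the rank-one factor contributes no new alternating data and the interaction between $A$ and $C$ is absorbed by $\omega|_{A\times C}$, should give the cleanest bookkeeping and complete the proof.
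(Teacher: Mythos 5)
Your handling of the routine parts (well-definedness of the two maps, injectivity, and exactness at $C_0^1(V,\mu_n)$) is correct, and your closed formula $\eta(x)=\prod_{i<j}\omega(e_i,e_j)^{x_ix_j}$ is exactly what the paper's own proof produces when its induction (through $V=A\oplus C$ with $C$ cyclic, extending $\eta_A$ by $\widehat{\eta_A}(a\oplus c)=\eta_A(a)$ and correcting by the cross term $s(a\oplus c)=b(a,c)^{-1}$) is unwound, so the route is the same. The gap is in the two verifications you defer, and they genuinely fail once $\omega$ takes a value of order greater than $2$. With the paper's convention $\delta(\eta)(x,y)=\eta(x)\eta(y)/\eta(xy)$ one computes $\delta(\eta)(x,y)=\prod_{i<j}\omega(e_i,e_j)^{-(x_iy_j+x_jy_i)}$, whereas $\omega(x,y)=\prod_{i<j}\omega(e_i,e_j)^{x_iy_j-x_jy_i}$; the two differ by $\prod_{i<j}\omega(e_i,e_j)^{2x_iy_j}$. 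Your justification --- that both are bicharacters agreeing on basis pairs --- breaks because a bicharacter is determined by its values on \emph{all ordered} pairs: $\delta(\eta)$ is symmetric (the coboundary of any $1$-cochain on an abelian group is, since $\eta(xy)=\eta(yx)$), so $\delta(\eta)(e_j,e_i)=\delta(\eta)(e_i,e_j)$, while $\omega(e_j,e_i)=\omega(e_i,e_j)^{-1}$; agreement for $i<j$ therefore does not propagate to $i>j$ unless $\omega(e_i,e_j)^2=1$. (There is also a harmless sign slip: on those pairs $\delta(\eta)(e_i,e_j)=\omega(e_i,e_j)^{-1}$, not $\omega(e_i,e_j)$.) Likewise the membership $\eta\in C_0^1(V,\mu_n)$ fails in general: $\eta(x^2)=\eta(x)^4$, so the condition $\eta(x^2)=\eta(x)^2$ forces $\eta(x)^2=1$, again $\mu_2$-values.

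Moreover, no cleverer choice of $\eta$ can close this gap, because the obstruction is structural: any $\delta(\eta)$ lying in $\bigwedge^2\widehat{V}$ is simultaneously symmetric and skew-symmetric, hence $\mu_2$-valued, so the image of $\delta\colon C_0^1(V,\mu_n)\to \bigwedge^2\widehat{V}$ is contained in the $2$-torsion of $\bigwedge^2\widehat{V}$. For instance, on $V=(\mathbb{Z}/4\mathbb{Z})^2$ the skew-symmetric form with $\omega(e_1,e_2)=i$ is not the coboundary of any $1$-cochain whatsoever, since it is not symmetric. Thus surjectivity as stated can hold only when $\bigwedge^2\widehat{V}$ is $2$-torsion, e.g.\ when $V$ has exponent $2$ --- which is the situation in all of the paper's applications (quadratic and symplectic spaces in characteristic two), and the case in which your argument works verbatim. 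You are in good company: the paper's own proof has the identical gap, since its claimed identity $\delta(s)=b'$ amounts to $b(a,c')b(a',c)=b(a,c')b(a',c)^{-1}$ and so also needs $b$ to be $\mu_2$-valued on $A\times C$. In short, your proposal faithfully reconstructs the paper's argument, but the step you yourself flagged as ``the real obstacle'' is not, and cannot be, closed for general exponent $n$; restricted to exponent $2$ (equivalently, to $\mu_2$-valued forms) your proof is complete once the two deferred verifications are written out, and there they are immediate.
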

\begin{proof}

Let $b\in \bigwedge^2 \widehat{V}$. Then, $1=b(xy,xy)=b(x,y)b(y,x)$, which implies that $b(x,y)=b(y,x)^{-1}$ for all $x,y \in V$. 

If $V$ is cyclic $\bigwedge^2 \widehat{V}=0$, and $\widehat{V} = C_0^1(V,\mu_n)$. Let $V=A\oplus C$, where $C$ has rank one.  Suppose that there exists $\eta_A\in C_0^1(A,\mu_n)$ such that $\delta(\eta_A)=b|_{A\times A}$. Let $\widehat{\eta_A} \in C_0^1(V,\mu_n),$ by $  \widehat{\eta_A}(a\oplus c)=\eta_A(a)$. Then $b':=b\delta(\widehat{\eta_A})^{-1}\in \bigwedge^2 \widehat{V}$ and $b'|_{A\times A}=1$. If we  define $s\in C^1(V,\mu_n)$ by $s(a\oplus c)=b(a,c)^{-1}$. A simple calculation shows that $\delta(s)=b'$. Therefore, $s\in C_0^1(V,\mu_n)$. Finally, for $\gamma:=  \widehat{\eta_A}s \in C_0^1(V,\mu_n)$ we have $\delta(\gamma)=\delta(s)\delta(\widehat{\eta_A})=b$.
\end{proof}

\begin{corol}\label{Corol:existencia 2-cociclo quadratic}
Let $(V,q)$ be a quadratic module and $b\in \Bil(V)$ that satisfies $\Tr(b)=q$. Then  
$$O(V,q)=\{g\in \Aut(V)|\exists \eta \in C_0^1(V,\mu_n) : b^{g}/b=\delta(\eta)\}.$$
\end{corol}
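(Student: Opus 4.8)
The plan is to show the two set inclusions defining $O(V,q)$, using the bicharacter $b$ with $\Tr(b)=q$ as a fixed reference 2-cochain and reducing everything to the behavior of $q$ under an automorphism $g$. First I would record the key translation: for $g\in \Aut(V)$, the twisted bicharacter $b^g$ (defined by $b^g(x,y)=b(g(x),g(y))$) also lies in $\Bil(V)$, and applying the trace map gives $\Tr(b^g)(x)=b^g(x,x)=b(g(x),g(x))=q(g(x))$. Thus $\Tr(b^g)=q\circ g$, while $\Tr(b)=q$. Since $\Tr$ is a group homomorphism by Lemma \ref{Lemma1:cohomologia ortogonales}, I get $\Tr(b^g/b)=(q\circ g)/q$ as elements of $\Quad(V)$. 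This is the bridge between the orthogonality condition $q\circ g=q$ and a cohomological condition on $b^g/b$.

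Next I would argue the forward inclusion. Suppose $g\in O(V,q)$, so $q(g(x))=q(x)$ for all $x$. By the computation above, $\Tr(b^g/b)=1$, i.e.\ $b^g/b\in \ker(\Tr)=\bigwedge^2\widehat{V}$ by the exactness in Lemma \ref{Lemma1:cohomologia ortogonales}. Now Lemma \ref{Lemma2:cohomologia ortogonales} says the map $\delta:C_0^1(V,\mu_n)\to\bigwedge^2\widehat{V}$ is surjective, so there exists $\eta\in C_0^1(V,\mu_n)$ with $\delta(\eta)=b^g/b$, which is exactly the condition in the displayed set. For the reverse inclusion, suppose there is $\eta\in C_0^1(V,\mu_n)$ with $b^g/b=\delta(\eta)$. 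Then $b^g/b\in\operatorname{Im}(\delta)=\bigwedge^2\widehat{V}$, so again $\Tr(b^g/b)=1$, giving $q\circ g=q$ and hence $g\in O(V,q)$.

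The only point requiring care — and the main obstacle — is verifying that the condition is genuinely captured inside $C_0^1(V,\mu_n)$ rather than a larger cochain group: I must confirm that membership of $b^g/b$ in $\bigwedge^2\widehat{V}$ is both necessary and sufficient, and that the defining requirements of $C_0^1(V,\mu_n)$ (that $\delta(\eta)$ be a bicharacter and $\eta(x^2)=\eta(x)^2$) are compatible with this. Necessity is automatic from $b^g/b\in\bigwedge^2\widehat{V}$ together with the surjectivity of $\delta$ onto $\bigwedge^2\widehat{V}$; sufficiency follows because $\operatorname{Im}(\delta)$ is \emph{exactly} $\bigwedge^2\widehat{V}$, so no $\eta\in C_0^1(V,\mu_n)$ can produce a $\delta(\eta)$ outside the kernel of $\Tr$. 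Both lemmas are invoked precisely to pin down this image and kernel, so once the identity $\Tr(b^g)=q\circ g$ is established the rest is a direct diagram chase through the two short exact sequences.
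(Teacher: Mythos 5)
Your proof is correct and takes essentially the same route as the paper: the identity $\Tr(b^g/b)=(q\circ g)/q$, Lemma \ref{Lemma1:cohomologia ortogonales} to place $b^g/b$ in $\ker(\Tr)=\bigwedge^2\widehat{V}$, and the surjectivity of $\delta:C_0^1(V,\mu_n)\to\bigwedge^2\widehat{V}$ from Lemma \ref{Lemma2:cohomologia ortogonales}. The only difference is cosmetic: the paper dismisses the reverse inclusion as clear, while you spell out why $\delta(\eta)\in\ker(\Tr)$ forces $q\circ g=q$.
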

\begin{proof}
It is clear that  $$\{g\in \Aut(V)|\exists \eta \in C_0^1(V,\mu_n) : b^{g}/b=\delta(\eta)\}\subseteq O(V,q).$$ Let $b\in \Bil(V)$ be such that $\Tr(b)=q$ and $g\in O(V,q)$. By Lemma \ref{Lemma1:cohomologia ortogonales} $b^{g}/b\in \bigwedge^2 \widehat{V}$ and by Lemma  \ref{Lemma2:cohomologia ortogonales} there exists $\eta\in C_0^1(V,\mu_n)$ such that $\delta(\eta)=b^{g}/b$. Therefore, $$O(V,q)=\{g\in \Aut(V)|\exists \eta \in C_0^1(V,\mu_n) : b^{g}/b=\delta(\eta)\}.$$
\end{proof}

Let $(V,q)$ be a quadratic module. We define the Weil representation of $O(V,q)$ analogously to the case of  a symplectic module. By Corollary \ref{Corol:existencia 2-cociclo quadratic} there is $b\in \Bil(V)$ such that $q(x)=b(x,x)$ for all $x\in V$. Let $A=\mathbb{C}_{b}[V]$ and fix for every $g\in O(V,q)$ a map $\eta_g\in C_0^1(V,\mu_n(k))$ such that $b^{g}/b=\delta(\eta_g)$. Then the map 
\begin{align*}
\alpha_g: A&\to A^{(g)}\\
u_x&\mapsto \eta_gu_{g(x)},
\end{align*}
is an isomorphism of $V$-algebra. The crossed product group associated by Proposition \ref{weil action} which we denote as $$\Ps(V,q):=\widehat{V}\#_\theta O(V,q),$$ and call the pseudo-symplectic group, see \cite{pseudosymplectic} and \cite{pseudodos}.

The Weil representation of  $\Ps(V,q)$ is the projective representation associated to the Weil action of $\Ps(V,q)$ on the simple algebra $A=\mathbb{C}_{b}[V]$.

\begin{obs}
\begin{itemize}
\item If $(V,q)$ is a quadratic module with $V$  homogeneous of exponent $n$, then the Weil representation of $\Ps(V,q)$ is defined over $\mathbb{Z}[e^{\frac{2\pi i}{n}}]$.
\item  The pseudo-symplectic group $\Ps(V,q)$ is a proper subgroup of the Affine pseudo-symplectic  $\APs(V,\omega_q)$ and the Weil representation of  $ \APs(V,\omega_q)$ is an extension of the Weil representation of $\Ps(V,q)$.
\end{itemize}
\end{obs}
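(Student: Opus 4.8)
The plan is to prove the two bullets separately, in both cases by tracking the coefficients that occur in the explicit Weil action of Proposition-Definition \ref{weil action}. First, for the statement that the Weil representation of $\Ps(V,q)$ is defined over $R:=\mathbb{Z}[e^{2\pi i/n}]$, I would begin from the observation that, since $V$ is homogeneous of exponent $n$, every bicharacter of $V$ takes values in $\mu_n\subseteq R$; in particular the chosen $b\in\Bil(V)$ with $\Tr(b)=q$ does. Hence $A=\mathbb{C}_b[V]$ carries the evident $R$-form $A_R=R_b[V]$, the free $R$-module on $\{u_x\}_{x\in V}$ with $u_xu_y=b(x,y)u_{xy}$. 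Every ingredient of the Weil action then preserves this $R$-form: the characters $\psi\in\widehat{V}=\Hom(V,\mu_n)$ act by $u_x\mapsto\psi(x)u_x$ with $\psi(x)\in\mu_n$, and the isomorphisms $\alpha_g\colon u_x\mapsto\eta_g(x)u_{g(x)}$ attached to $g\in O(V,q)$ have $\eta_g\in C_0^1(V,\mu_n)$, hence are $\mu_n$-valued. So the Weil action $\Ps(V,q)\to\Aut(A)$ restricts to an action by $R$-algebra automorphisms of $A_R$, and the crossed-system cocycle $\theta$ takes values in $\widehat{V}=\Hom(V,\mu_n)$.

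It then remains to realize the associated projective representation integrally, and here I would follow the polarization model of Example \ref{cannoical weil}: choosing a maximal isotropic $U\subseteq V$ for $\omega_q$ and setting $M=\operatorname{Span}\{t_\beta:\beta\in U\}$, the algebra $A$ acts with $u_x\cdot t_\beta$ equal to a value of $b$ (hence in $\mu_n$) times $t_{x+\beta}$, so that $M_R=\bigoplus_{\beta\in U}R\,t_\beta$ is an $A_R$-lattice. In this basis the operator implementing $(\psi,x)\in\Ps(V,q)$ is, up to a single global scalar, a monomial matrix whose nonzero entries are products of values of $b$, $\psi$ and $\eta_x$, all in $\mu_n\subseteq R$. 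The hard part will be precisely this last point: each implementing operator a priori carries a Gauss-sum type normalization constant that need not lie in $R$, and one must use that the representation is only defined projectively to discard it. Once that is done, the explicit model yields $\rho\colon\Ps(V,q)\to\operatorname{PGL}(M_R)$ with representatives in $\operatorname{GL}(M_R)$ having entries in $\mu_n\subseteq R$, which is exactly the claim.

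For the second bullet I would take $\alpha=b$ in the construction of $\APs(V,\omega_q)$, which is legitimate because a bicharacter is a $2$-cocycle with $\Alt(b)=\omega_q$; then both groups are built from the same data $A=\mathbb{C}_b[V]$ and $\widehat{V}=\Aut_V(A)$, and $\St(A)=\Sp(V,\omega_q)$. If $g\in O(V,q)$, then $q\circ g=q$ gives $\omega_q(g(x),g(y))=q(g(xy))/\big(q(g(x))q(g(y))\big)=\omega_q(x,y)$, so $O(V,q)\subseteq\Sp(V,\omega_q)$, the inclusion being proper in general since $\Sp(V,\omega_q)$ contains automorphisms preserving $\omega_q$ but not $q$. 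Choosing for each $g\in O(V,q)$ the orthogonal isomorphism $\alpha_g$ (with $\eta_g\in C_0^1$) as the representative used in the symplectic construction, the crossed system $\theta$ of $\APs(V,\omega_q)=\widehat{V}\#_\theta\Sp(V,\omega_q)$ restricts to that of $\Ps(V,q)=\widehat{V}\#_\theta O(V,q)$, exhibiting $\Ps(V,q)$ as a proper subgroup of $\APs(V,\omega_q)$. Finally, both Weil representations are the projective representations attached to one and the same Weil action on the single algebra $A$, so restriction along this subgroup inclusion carries the Weil representation of $\APs(V,\omega_q)$ to that of $\Ps(V,q)$; equivalently, the former extends the latter.
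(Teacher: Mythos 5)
Your second bullet is handled correctly, and by what is clearly the intended argument: a bicharacter $b$ with $\Tr(b)=q$ is in particular a $2$-cocycle with $\Alt(b)=\omega_q$, so both groups are crossed products built from the single algebra $A=\mathbb{C}_b[V]$; the inclusion $O(V,q)\subseteq\Sp(V,\omega_q)$ follows from $q\circ g=q$; and choosing for $g\in O(V,q)$ the same representatives $\alpha_g$ (with $\eta_g\in C_0^1(V,\mu_n)$) in both constructions makes the crossed system of $\APs(V,\omega_q)$ restrict to that of $\Ps(V,q)$, so that one Weil action, hence one Weil representation, restricts to the other. The paper states the remark without any proof, treating exactly this, plus the observation that every scalar entering the construction ($b$, the characters in $\widehat{V}$, the cochains $\eta_g$) lies in $\mu_n\subseteq\mathbb{Z}[e^{2\pi i/n}]$, as self-evident; your first paragraph is precisely that observation. (One caveat you share with the paper: properness of $\Ps(V,q)\leq\APs(V,\omega_q)$ is not automatic; for the anisotropic plane over $\mathbb{F}_2$ one has $O(V,q)=\GL_2(\mathbb{F}_2)=\Sp(V,\omega_q)$, so "proper" genuinely needs a hypothesis, and your "in general" is doing real work there.)

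The genuine gap is in your first bullet, at the passage from the $R$-form of the algebra to an $R$-form of the projective representation. The claim that in the polarization model every implementing operator is, up to one global scalar, a \emph{monomial} matrix is false: an element $g\in O(V,q)$ that does not preserve the chosen Lagrangian $U$ (e.g.\ one interchanging $U$ and $W$) is implemented by a Fourier-transform type operator with all entries nonzero; already for the hyperbolic plane over $\mathbb{F}_2$, in the basis where $u_{(1,0)},u_{(0,1)}$ act as the Pauli matrices $X,Z$, the swap is implemented by $\begin{pmatrix}1&1\\1&-1\end{pmatrix}$, which no monomial matrix can replace since monomial conjugation cannot take the zero-diagonal $X$ to the diagonal $Z$. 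So your argument breaks exactly at the step you yourself flag as "the hard part," and that step is not a removable normalization issue; it is the entire content of the strong statement you are aiming at (image in $\operatorname{PGL}(M_R)$ with entries in $\mu_n$), and that strong statement fails in general. For the quadratic module of the anisotropic (Arf invariant $1$) plane over $\mathbb{F}_2$ one has $n=2$, $O(V,q)\cong S_3$, $A\cong M_2(\mathbb{C})$, and the image of the Weil projective representation contains a copy of $A_4$: the classes $[u_w]$, $w\in V$, form a Klein four-group with anticommuting lifts (since $\omega_q$ is non-degenerate), permuted transitively by the operator implementing a $3$-cycle of $O(V,q)$. Such an $A_4$ cannot be conjugated into $\operatorname{PGL}_2(\mathbb{Q})$: rational anticommuting lifts $P,Q$ normalized projectively by a rational $3$-cycle are forced to satisfy $P^2=p$, $Q^2=q$ with $p,q$ \emph{negative} rationals, so they would generate a quaternion algebra ramified at the real place inside $M_2(\mathbb{Q})$, a contradiction. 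Hence no choice of basis and scalars realizes that Weil representation by matrices over $\mathbb{Z}=\mathbb{Z}[e^{2\pi i/2}]$. What is true, and is all that the paper's construction actually yields, is the statement your first paragraph proves: the order $R_b[V]$, its lattice $M_R$, and the Weil action of $\Ps(V,q)$ by $R$-algebra automorphisms are defined over $R$; "defined over $\mathbb{Z}[e^{2\pi i/n}]$" can only be read in that sense (automorphisms of an $R$-order, i.e.\ an inner form of $\operatorname{PGL}$), not in the $\operatorname{PGL}(M_R)$ sense your proof pursues.
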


\section{Examples of non-isomorphic isocategorical groups and  Weil representations}\label{Section:Examples of non-isomorphic isocategorical groups and  Weil representations}


The goal of this section is to construct some concrete examples of
non-isomorphic isocategorical groups.

\begin{lem}
Let $S$ be a finite abelian group and $(S,K,N,\sigma,\gamma)$ a torsor datum over $S$ with associated simple $S$-Galois algebra $B:=A(K_\sigma[N],\gamma)$. Then 
$$\Aut_S(B)\cong \widehat{N}\oplus \Gal(K|k)$$and 
$$\operatorname{St}(B)\cong \operatorname{St}([\sigma,\gamma]):=\{g\in \Aut_N(S): [(\sigma,\gamma)]=[(\sigma^g,\gamma^g)]\in H^2_{\Gal(K|k)}(N,K^*) \},$$where $\Aut_N(S)=\{g\in \Aut(S)\mid g|_N\in \Aut(N)\}$.
\end{lem}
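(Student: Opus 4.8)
I need to prove two isomorphisms for a torsor datum $(S,K,N,\sigma,\gamma)$ over an abelian group $S$, with $B = A(K_\sigma[N],\gamma)$:
1. $\Aut_S(B) \cong \widehat{N} \oplus \Gal(K|k)$
2. $\operatorname{St}(B) \cong \operatorname{St}([\sigma,\gamma])$

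**Setting up.** A torsor datum has properties listed in Definition \ref{defin:torsor data}. Since $S$ is abelian, $S = N \oplus \Gal(K|k)$ (condition 2). I have access to:
- Lemma \ref{Lema:descrip automorfismo simple Galois}: describes $\Aut_S(B)$ as pairs $(\eta, \omega)$ where $\eta \in C^1(N, K^*)$, $\omega \in \mathcal{Z}(\Gal(K|k))$, satisfying $\partial(\eta) = (\omega(\sigma)/\sigma, \omega(\gamma)/\gamma)$, with product $(\eta, \omega)(\eta', \omega') = (\eta \cdot {}^\omega\eta', \omega\omega')$.
- Lemma \ref{prop condiciones algebra simple}: since $|\Aut_S(B)| = |S|$ (from torsor datum + Lemma \ref{prop condiciones algebra simple}), we have conditions (1)-(3). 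In particular $\Gal(K|k)$ is abelian so $\mathcal{Z}(\Gal(K|k)) = \Gal(K|k)$, and $\Hom_{\Gal(K|k)}(N,K^*) = \Hom(N,K^*)$.
- $\St(B) = \{g \in \Aut(S) : B^{(g)} \cong B\}$, with the characterization from \cite{Manuel} (Prop 5.6): $B^{(g)} \cong B$ iff $N$ is $g$-invariant and $[(\sigma,\gamma)] = [(\sigma^g,\gamma^g)]$ in $H^2_{\Gal(K|k)}(N,K^*)$.

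**For part 1 ($\Aut_S(B)$).** Because $(S,K,N,\sigma,\gamma)$ is a torsor datum, conditions 4 and 5 give $\sigma \in Z^2(N,k^*)$ (values in $k^*$) and $\gamma|_{\Gal(K|k)\times N} = 1$. Since $\sigma$ takes values in $k^*$, for $\omega \in \Gal(K|k)$ we have $\omega(\sigma) = \sigma$, and since $\gamma|_{\Gal \times N}=1$ and $\gamma|_{N\times N} = \Alt(\sigma)$ also takes values in $k^*$... I need to check $\omega(\gamma)/\gamma$. The defining condition $\partial(\eta) = (\omega(\sigma)/\sigma, \omega(\gamma)/\gamma) = (1, 1)$ becomes $\partial(\eta) = 0$, meaning $\eta \in \Hom(N, K^*)$. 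Since $\Gal(K|k)$ acts trivially on $\Hom(N,K^*) = \Hom(N,k^*) = \widehat{N}$ (from Lemma \ref{prop condiciones algebra simple}(2) combined with Lemma \ref{Lema1:main teor 2}), the product becomes $(\eta,\omega)(\eta',\omega') = (\eta \eta', \omega\omega')$. Thus $\Aut_S(B) \cong \widehat{N} \oplus \Gal(K|k)$ directly. The main verification is confirming $\omega(\gamma)/\gamma = 1$, which follows since $\gamma$ takes values in $k^*$.

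**For part 2 ($\St(B)$).** Here I apply \cite[Proposition 5.6]{Manuel}. An element $g \in \Aut(S)$ satisfies $B^{(g)} \cong B$ as $S$-algebras iff $g$ preserves $N$ (i.e. $g \in \Aut_N(S)$) and $[(\sigma,\gamma)] = [(\sigma^g,\gamma^g)]$ in $H^2_{\Gal(K|k)}(N,K^*)$. This is precisely the definition of $\St([\sigma,\gamma])$. The identification is essentially a matter of unwinding the definition; the content lies in the cited result from \cite{Manuel}.

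**Proof proposal below:**

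\begin{proof}
Since $(S,K,N,\sigma,\gamma)$ is a torsor datum with $|\Aut_S(B)|=|S|$, Lemma \ref{prop condiciones algebra simple} applies: $\Gal(K|k)$ is abelian (so $\mathcal{Z}(\Gal(K|k))=\Gal(K|k)$), and $\Hom_{\Gal(K|k)}(N,K^*)=\Hom(N,K^*)$. By Lemma \ref{Lema1:main teor 2} the base field $k$ contains a primitive root of unity of order the exponent of $N$, so $\Hom(N,K^*)=\Hom(N,k^*)=\widehat{N}$ and $\Gal(K|k)$ acts trivially on $\widehat{N}$.

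By Lemma \ref{Lema:descrip automorfismo simple Galois}, the group $\Aut_S(B)$ consists of pairs $(\eta,\omega)\in C^1(N,K^*)\times\Gal(K|k)$ with $\partial(\eta)=\bigl(\tfrac{\omega(\sigma)}{\sigma},\tfrac{\omega(\gamma)}{\gamma}\bigr)$, and product $(\eta,\omega)(\eta',\omega')=(\eta\,{}^\omega\eta',\omega\omega')$. By conditions (4) and (5) of Definition \ref{defin:torsor data}, $\sigma$ takes values in $k^*$ and $\gamma$ takes values in $k^*$ (since $\gamma|_{\Gal(K|k)\times N}=1$ and $\gamma|_{N\times N}=\Alt(\sigma)\in k^*$). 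Hence $\omega$ fixes both $\sigma$ and $\gamma$, so $\tfrac{\omega(\sigma)}{\sigma}=\tfrac{\omega(\gamma)}{\gamma}=1$ and the defining condition reduces to $\partial(\eta)=0$, i.e. $\eta\in\Hom(N,K^*)=\widehat{N}$. As $\Gal(K|k)$ acts trivially on $\widehat{N}$, we have ${}^\omega\eta'=\eta'$ and the product becomes componentwise multiplication. Therefore the assignment $(\eta,\omega)\mapsto(\eta,\omega)$ gives an isomorphism
\[
\Aut_S(B)\cong \widehat{N}\oplus\Gal(K|k).
\]

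For the second statement, recall $\St(B)=\{g\in\Aut(S)\mid B^{(g)}\cong B \text{ as $S$-algebras}\}$. By \cite[Proposition 5.6]{Manuel}, for $g\in\Aut(S)$ one has $B^{(g)}\cong B$ as $S$-algebras if and only if $g(N)=N$ (equivalently $g\in\Aut_N(S)$) and $[(\sigma,\gamma)]=[(\sigma^g,\gamma^g)]$ in $H^2_{\Gal(K|k)}(N,K^*)$, where $\sigma^g(x,y)=\sigma(g^{-1}(x),g^{-1}(y))$ and $\gamma^g(s,x)=\gamma(g^{-1}(s),g^{-1}(x))$. This is exactly the condition defining $\St([\sigma,\gamma])$, whence $\St(B)\cong\St([\sigma,\gamma])$.
\end{proof}
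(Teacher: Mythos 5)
Your proof is correct and takes essentially the same route as the paper's: the published proof consists precisely of citing Lemma \ref{Lema:descrip automorfismo simple Galois } for the first isomorphism and the proof of Theorem \ref{main result 2} (which rests on \cite[Proposition 5.6]{Manuel}) for the second, and your write-up merely unfolds those two citations, using the torsor-datum conditions ($\sigma$ and $\gamma$ valued in $k^*$, roots of unity in $k$) to trivialize the Galois action. The only slight imprecision is that \cite[Proposition 5.6]{Manuel}, as the paper itself invokes it, characterizes $B^{(g)}\cong B$ via the existence of some $\omega\in\Gal(K|k)$ with $(\omega(\sigma),\omega(\gamma))$ cohomologous to $(\sigma^g,\gamma^g)$, whereas you state the criterion with $\omega$ suppressed; this is harmless here, since for a torsor datum $\omega(\sigma)=\sigma$ and $\omega(\gamma)=\gamma$ for every $\omega$.
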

\begin{proof}
The isomorphism $\Aut_S(B)\cong \widehat{N}\oplus \Gal(K|k)$ follows  from Lemma \ref{Lema:descrip automorfismo simple Galois }. The isomorphism $\operatorname{St}(B)\cong \operatorname{St}([\sigma,\gamma])$ follows from the proof of Theorem \ref{main result 2}.
\end{proof}
\begin{teor}\label{Teor:Main4}
Let $S$ be a finite abelian group and $(S,K,N,\sigma,\gamma)$ a torsor datum over $S$, with associated simple $S$-Galois algebra $B:=A(K_\sigma[N],\gamma)$. Then the semidirect product group $$(\widehat{N}\oplus \Gal(K|k))\rtimes  \operatorname{St}([\sigma,\gamma]) $$ and the crossed product group  $$(\widehat{N}\oplus \Gal(K|k))\#_\theta \operatorname{St}([\sigma,\gamma])$$ (See Proposition-Definition \ref{weil action}) are isocategorical over $k$.
\end{teor}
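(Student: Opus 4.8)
The plan is to exhibit, for each of the two groups in question, an explicit $G$-Galois algebra whose $G$-equivariant automorphism group has the same order as $G$, and then invoke the isocategorical criterion already established. Concretely, set $G_1 = (\widehat{N}\oplus \Gal(K|k))\rtimes \St([\sigma,\gamma])$ and $G_2 = (\widehat{N}\oplus \Gal(K|k))\#_\theta \St([\sigma,\gamma])$. Both fit into the crossed-product framework of Proposition \ref{Prop:Isomorfismo Automorfismo Galois y auto grupoide equivariant}: the simple $S$-Galois algebra $B=A(K_\sigma[N],\gamma)$ carries the Weil action of $\St(B)\cong\St([\sigma,\gamma])$ on $\Aut_S(B)\cong\widehat{N}\oplus\Gal(K|k)$ via the $2$-cocycle $\theta\in Z^2(\St(B),\Aut_S(B))$ of Proposition-Definition \ref{weil action}. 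The group $G_2$ is by definition the crossed product $\Aut_S(B)\#_\theta\St(B)$, while $G_1$ is the semidirect product obtained by using the same action but the trivial cocycle.

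First I would produce the $G_2$-Galois algebra. By Proposition \ref{Prop:Isomorfismo Automorfismo Galois y auto grupoide equivariant}, applied with $X=\St(B)$ and the crossed system $(\cdot,\theta)$ of the Weil action, there is an isomorphism of exact sequences identifying $\Aut_{\mathcal{G}al(N,k)/\!\!/X}(B)$ with $\Aut_{G_2}(\Ind_N^{G_2}(B))$, where $G_2=\Aut_S(B)\#_\theta\St(B)$. The middle term of the groupoid sequence \eqref{sequence exact of quotient groupoid} is $\Aut_{\mathcal{G}al(N,k)/\!\!/X}(B)$, and by part (b) of Proposition \ref{Prop estructura grupoide accion e invariante} this sits in
\begin{equation*}
1\to \Aut_{\mathcal{G}al(N,k)}(B)\to \Aut_{\mathcal{G}al(N,k)/\!\!/X}(B)\to \St([B])\to 1,
\end{equation*}
which is exact because $\St([B])=\St(B)=X$. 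Since $\Aut_{\mathcal{G}al(N,k)}(B)=\Aut_S(B)$ has order $|\widehat N|\,|\Gal(K|k)|=|S|$ and the quotient is $\St(B)$, the middle group has order $|\Aut_S(B)|\,|\St(B)|=|G_2|$. Thus $A:=\Ind_N^{G_2}(B)$ is a $G_2$-Galois algebra with $|\Aut_{G_2}(A)|=|G_2|$, and by Proposition \ref{Prop equivalencias con isocat existencia bigalois}(c) the group $\Aut_{G_2}(A)$ is isocategorical to $G_2$.

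The crux is then to show that this same $\Aut_{G_2}(A)$ is isomorphic to $G_1$, i.e. that the cocycle $\theta$ can be untwisted at the level of the full extension. Here I would use part (c) of Proposition \ref{Prop estructura grupoide accion e invariante}: the existence of an equivariant structure $(B,u_g)\in\mathcal{G}al(N,k)^X$ is equivalent to a splitting of the groupoid sequence, and the existence of such $u_g$ amounts to the vanishing of the obstruction carried by $\theta$ after passing from $\Aut_S(B)$-valued to the honest extension. The key point is that the Weil action is \emph{inner} modulo $\Aut_S(B)$: the $\alpha_g$ themselves give a set-theoretic section, so the extension $\Aut_{\mathcal{G}al(N,k)/\!\!/X}(B)$ is determined by the class of $\theta$ in $H^2(\St(B),\Aut_S(B))$, and both $G_1$ and $G_2$ are extensions of $\St(B)$ by $\Aut_S(B)$ with the same action, differing only by $\theta$. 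I expect the cleanest route is to observe that $\Aut_{G_1}(\Ind_N^{G_1}(B))$ and $\Aut_{G_2}(\Ind_N^{G_2}(B))$ are \emph{the same} group (both equal to $\Aut_{\mathcal{G}al(N,k)/\!\!/X}(B)$, which depends only on the action and $\theta$), so whichever of $G_1,G_2$ we start from, we recover isocategoricity with a common third group; transitivity of isocategoricity (monoidal equivalence is an equivalence relation) then yields that $G_1$ and $G_2$ are isocategorical to each other over $k$. The main obstacle is bookkeeping the two cocycles consistently and checking that replacing $\theta$ by the trivial cocycle genuinely corresponds to passing between $\Ind_N^{G_2}(B)$ and $\Ind_N^{G_1}(B)$ as $G_i$-Galois algebras; this is where the naturality of $\gamma_{x,y}$ and the explicit formula \eqref{ecuacion de la accion de Weil} must be used carefully.
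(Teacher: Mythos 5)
Your first steps are sound and coincide with the paper's strategy (induce $B$, identify $\Aut_{G_i}(\Ind_S^{G_i}(B))$ with an automorphism group in a quotient groupoid via Proposition \ref{Prop:Isomorfismo Automorfismo Galois y auto grupoide equivariant}, count orders with \eqref{sequence exact of quotient groupoid}, apply Proposition \ref{Prop equivalencias con isocat existencia bigalois}(c)). The genuine gap is the claim on which your ``cleanest route'' rests: that $\Aut_{G_1}(\Ind_S^{G_1}(B))$ and $\Aut_{G_2}(\Ind_S^{G_2}(B))$ are \emph{the same} group, ``both equal to $\Aut_{\mathcal{G}al/\!\!/X}(B)$''. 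This is false. The quotient groupoid depends not only on the twisting functors $(-)^{(x)}$ but also on the natural isomorphisms $\gamma_{x,y}$ of the $X$-action, and these are different in the two cases: $\gamma_{x,y}=\id$ for the semidirect product $G_1$, and $\gamma_{x,y}=\theta(x,y)$ for the crossed product $G_2$, so there are two different quotient groupoids in play. Fix isomorphisms $\alpha_g\colon B^{(g)}\to B$ and write every morphism of the quotient groupoid as $(g,f\alpha_g)$ with $f\in\Aut_S(B)$; the composition rule $\alpha_g\odot\alpha_h=\alpha_h\circ\alpha_g^{(h)}\circ\gamma_{g,h}$ then gives, when $\gamma_{g,h}=\id$, $\alpha_g\odot\alpha_h=\alpha_h\alpha_g=\alpha_{gh}\theta(g,h)^{-1}$, so the section $g\mapsto (g,\alpha_g)$ fails to be multiplicative by exactly the Weil cocycle and $\Aut_{G_1}(\Ind_S^{G_1}(B))\cong\Aut_S(B)\#_\theta X=G_2$; whereas, when $\gamma_{g,h}=\theta(g,h)$, the two cocycles cancel, $\alpha_g\odot\alpha_h=\alpha_h\alpha_g\theta(g,h)=\alpha_{gh}$, so the same section is a group homomorphism and $\Aut_{G_2}(\Ind_S^{G_2}(B))\cong\Aut_S(B)\rtimes X=G_1$. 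The two groups you want to equate are therefore canonically $G_2$ and $G_1$; declaring them equal amounts to assuming the group isomorphism $G_1\cong G_2$, which is strictly stronger than the theorem (isocategoricity) and is exactly what fails in the motivating examples (Proposition \ref{Prop:Pseudo simplect y affine orthogonal son isocat}). The transitivity argument therefore has no content.

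The fix is the argument you sketched and then set aside, and it is one line: $u_g:=\alpha_g$ is an $X$-equivariant structure on $B$ for the $\theta$-twisted action, because the condition \eqref{diagrama u_g}, namely $u_{xy}=u_y\circ u_x^{(y)}\circ\gamma_{x,y}$, is precisely the identity $\alpha_{xy}=\alpha_y\alpha_x\theta(x,y)$ defining $\theta$ in Proposition-Definition \ref{weil action}. By parts (c) and (d) of Proposition \ref{Prop estructura grupoide accion e invariante}, the sequence \eqref{sequence exact of quotient groupoid} then splits, with conjugation action $f\mapsto \alpha_g f^{(g)}\alpha_g^{-1}$ (the Weil action), so $\Aut_{G_2}(\Ind_S^{G_2}(B))\cong\Aut_S(B)\rtimes\St([\sigma,\gamma])=G_1$; combined with your order count and Proposition \ref{Prop equivalencias con isocat existencia bigalois}(c), this proves the theorem directly, with no appeal to transitivity. (The paper's own proof is the mirror computation: the torsor datum over $S$ is a torsor datum over $G_1$, Theorem \ref{main result 2} gives $|\Aut_{G_1}(\Ind_S^{G_1}(B))|=|G_1|$, and Proposition \ref{Prop:Isomorfismo Automorfismo Galois y auto grupoide equivariant} identifies this group with the crossed product $G_2$.) One loose end you should also close: $B$ is an $S$-Galois algebra, while the normal subgroup of $G_1$ and $G_2$ from which you induce is $\widehat{N}\oplus\Gal(K|k)=\Aut_S(B)$, not literally $S$; you need the identification $S=N\oplus\Gal(K|k)\cong\widehat{N}\oplus\Gal(K|k)$ (using $N\cong\widehat{N}$ via the non-degenerate form $\Alt(\sigma)$ and the roots of unity in $k$), together with the check that under it the conjugation action of $\St([\sigma,\gamma])$ inside $G_i$ corresponds to its natural action on $S$, which is what justifies $\St([B])=X$ as you assert.
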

\begin{proof}
Follows from Theorem \ref{main result 2} and Proposition \ref{Prop:Isomorfismo Automorfismo Galois y auto grupoide equivariant}.
\end{proof}

Let $(V,q)$ be a linear quadratic space over a finite field of characteristic $p$. Let $(V,\widehat{q})$ be the the quadratic module associated by Example \ref{Ejemplo: modulo cuadratico de spacop cuadratico}. The \textit{linear} pseudo-symplectic group is defined as $$\Ps_k(V,q):=\widehat{V}\#_{\theta}O_k(V,q)\subset \widehat{V}\#_{\theta}O(V,\widehat{q})= \Ps(V,\widehat{q})$$ and the Weil representation of $\Ps_k(V,q)$ is by definition the restriction of the Weil representation of $\Ps(V,\widehat{q})$.

If $(V, \langle -,-\rangle)$ is a symplectic space over a finite field $k$, by Example \ref{Ejem:smplectico lineal a simplecto modulo} there is an associated symplectic module $(V,\omega)$  and  $\Sp_k(V)\subset  \Sp(V,\omega)$,  where $\Sp_k(V)$ is the the \textit{linear} symplectic group.

The \textit{linear} affine pseudo-symplectic group is defined as $$\APs_k(V):=\widehat{V}\#_{\theta}\Sp_k(V)\subseteq \APs(V,\omega)$$ and its Weil representation is the restriction of the Weil representation of $\APs(V,\omega)$.

 We will denote by $\Omega_k(V,q)$ the subgroup of index 2 in $O_k(V,q)$, for which Dickson invariant is zero, see \cite{Grove} for details and the basic properties of $\Omega_k(V,q)$.
Next proposition is a generalization of \cite[Theorem 1]{Grupo-Extra-special}  to arbitrary finite fields of characteristic two.  

\begin{prop}\label{Proposition:generalizacion Teorema Extra especial}
Let $k$ be a finite field of characteristic two and $(V,q)$ a quadratic space over $k$, where $\dim_k(V)=2n$ and $n\geq 4$. Then the exact sequences 
$$0\to \widehat{V}\to \APs_k(V)\to \Sp_k(V,\omega_q)\to 1,$$ $$0\to \widehat{V}\to \Ps_k(V,q)\to O_k(V,q)\to 1,$$ 
$$0\to \widehat{V}\to \widehat{V}\#_{\theta}\Omega_k(V,q)\to \Omega_k(V,q)\to 1,$$
are nonsplit.
\end{prop}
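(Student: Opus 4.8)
The plan is to reduce all three statements to the single nonsplitting of the extension over $\Omega_k(V,q)$, and then to detect that nonsplitting on the cyclic subgroup generated by one well-chosen involution.

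First I would record the elementary fact that if $1\to N\to E\to \Gamma\to 1$ splits and $\Gamma_0\leq \Gamma$, then the induced extension $1\to N\to \pi^{-1}(\Gamma_0)\to \Gamma_0\to 1$ splits, since a section restricts to a section. By the Remark that $\Ps(V,\widehat q)$ is a subgroup of $\APs(V,\omega_{\widehat q})$, together with the inclusions $\Omega_k(V,q)\leq O_k(V,q)\leq \Sp_k(V)$, the three extensions of the statement fit into a chain
$$\widehat V\#_\theta \Omega_k(V,q)\ \leq\ \Ps_k(V,q)\ \leq\ \APs_k(V),$$
all with the same cocycle $\theta$ coming from the Weil action of Proposition-Definition \ref{weil action} (up to the canonical identifications of $\widehat V$). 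Hence it suffices to prove that $\widehat V\#_\theta \Omega_k(V,q)$ is nonsplit, i.e. that $[\theta|_{\Omega_k(V,q)}]\neq 0$ in $H^2(\Omega_k(V,q),\widehat V)$; nonsplitting of the other two then follows from the contrapositive of the restriction fact.

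Next I would pass to an involution. For any involution $t\in \Omega_k(V,q)$ the restriction map $H^2(\Omega_k(V,q),\widehat V)\to H^2(\langle t\rangle,\widehat V)$ sends $[\theta]$ to the class of $\theta|_{\langle t\rangle}$, so it is enough to exhibit one $t$ for which $\theta|_{\langle t\rangle}$ is nonsplit. In the crossed product $\widehat V\#_\theta\langle t\rangle$ the square of a lift $(\psi,t)$ is $(\psi\cdot {}^t\psi\cdot\theta(t,t),1)$, so the restricted extension splits if and only if $\theta(t,t)\in (1+t)\widehat V:=\{\psi\cdot {}^t\psi\mid \psi\in\widehat V\}$. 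Using $\theta(t,t)=\alpha_t^{-1}\alpha_t^{-1}\alpha_{t^2}$ together with the explicit form $\alpha_t(u_x)=\eta_t(x)u_{t(x)}$, where $\eta_t$ satisfies $\delta(\eta_t)=\alpha^t/\alpha$, one computes that $\theta(t,t)$ is the $\mu_2$-valued character
$$x\longmapsto \eta_t(x)\,\eta_t(t x).$$
Thus the whole problem collapses to choosing a good involution $t\in \Omega_k(V,q)$ and checking that this character is not of the form $\psi\cdot {}^t\psi$.

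Finally I would produce the involution. Since $n\geq 4$, the group $\Omega_k(V,q)$ contains an involution supported on a nondegenerate subspace of small dimension, taken as a product of two commuting reflections so that its Dickson invariant is trivial; for such $t$ the cochain $\eta_t$ can be written down explicitly from a chosen polarization $\alpha=b$ of $q$, and I would then evaluate $\eta_t(x)\eta_t(tx)$ and exhibit a vector on which it differs from every $\psi\cdot{}^t\psi$. The main obstacle is exactly this last computation: it is where the genuine characteristic-two anomaly enters, since $\eta_t$ takes values in $\mu_4$ while $\theta(t,t)$ and $(1+t)\widehat V$ live in $\mu_2$, and one must show that the resulting $\mu_2$-class does not vanish. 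For $k=\mathbb{F}_2$ this nonvanishing is \cite[Theorem 1]{Grupo-Extra-special}; the point of the general argument is that $t$ and its $\eta_t$ can be arranged inside a subfield copy of the base case so that the nontriviality is inherited from, or computed exactly as in, that situation, with the hypothesis $n\geq 4$ guaranteeing both that $\Omega_k(V,q)$ is large enough to contain such an involution and that no low-rank exceptional splitting intervenes.
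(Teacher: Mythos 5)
Your first reduction (a splitting restricts to a splitting along a subgroup, so all three sequences follow from nonsplitting over $\Omega_k(V,q)$) is sound, and it parallels the paper, which also treats the three sequences simultaneously through a subgroup of $\Omega_k(V,q)$. The fatal problem is your second reduction, to a single involution: no involution can ever detect the class, so the computation you defer ("the main obstacle") is not merely hard, it is impossible. Indeed, take any involution $t\in \Sp_k(V)$ and any $\eta_t$ with $\partial(\eta_t)=b^t/b$. By your own (correct) formula, $\theta(t,t)$ is the character $\chi_t(x)=\eta_t(x)\eta_t(tx)$. For $x\in V^t$ this gives, since $V$ has exponent two,
\[
\chi_t(x)=\eta_t(x)^2=\partial(\eta_t)(x,x)=\frac{b(tx,tx)}{b(x,x)}=1,
\]
so $\chi_t$ is trivial on the fixed subgroup $V^t$. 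On the other hand, $(1+t)\widehat{V}$ is \emph{exactly} the group of characters trivial on $V^t$: each $\psi\cdot{}^t\psi$ kills $V^t$, and an $\mathbb{F}_2$-dimension count shows both subgroups have dimension $\operatorname{rank}(1+t)$, hence they coincide. Therefore $\theta(t,t)\in(1+t)\widehat{V}$ for \emph{every} involution $t$: every involution lifts to an involution, the restriction of each of the three extensions to every subgroup $\langle t\rangle$ splits, and the nontriviality you want to exhibit never occurs (over any field, and whether $t$ lies in $\Omega_k$, $O_k$, or $\Sp_k$). This also shows that your appeal to \cite[Theorem 1]{Grupo-Extra-special} misreads that result: it asserts nonsplitting of the global extension, not nonvanishing of its restriction to some involution; by the above, no such involution exists even for $k=\mathbb{F}_2$.

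This is precisely why the paper does not argue on a cyclic subgroup but instead verifies the hypotheses of Griess's criterion \cite[Theorem 0]{Grupo-Extra-special}: one needs a hyperbolic pair $x,y$ (so $q(x)=q(y)=0$) with $z=x+y$ nonsingular, together with a subgroup $G\subseteq \Omega_k(V,q)$ which fixes $z$, contains an involution swapping $x$ and $y$, and has \emph{no subgroup of index two}; the paper takes $G=\operatorname{Stab}_z(\Omega_k(V,q))\cong O_k(H,q)\cong \Sp_{2n-2}(k)$, simple because $n\geq 4$. The absence of index-two subgroups is the essential global input: it forces a putative complement to fix a lift of $z$ of order four rather than merely normalize its span, and the contradiction arises from the interaction of that rigidity with the involution swapping $x$ and $y$ --- an interaction invisible on $\langle t\rangle$ alone, as the computation above proves. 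Any repair of your argument must restrict the cocycle to a subgroup carrying at least this much structure; the passage to a single involution irretrievably loses the obstruction.
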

\begin{proof}
The proof consists in apply \cite[Theorem 0]{Grupo-Extra-special} to our exact sequences. We need to show  that there exists a subgroup $W=\{x,y,z,0\}\subset \widehat{V}$ such that  $x$ and $y$ are singular (that is, $q(x)=q(y)=0$), $z=x+y$ is non-singular (that is, $q(z)\neq 0$) and there exists a subgroup $G\subset \Omega_k(V,q)\subset O_k(V,q)\subset \Sp_k(V,\omega_q)$ satisfying 
\begin{enumerate}
\item $G$ fixes $z$,
\item $G$ has an involution $t$, with $t(x)=y$,
\item $G$ has no subgroup of index 2.
\end{enumerate}  

Let $(x,y)\subset V$ an hyperbolyc pair, that is, $q(x)=q(y)=0$ and $B_q(x,y)=1$. Thus, $z=x+y$ is nonsingular. Let $G=\operatorname{Stab}_z(\Omega_k(V,q))$ and $V=\langle z \rangle_k\oplus H$. Then,  the canonical map $G\to O_k(H,q)$ is an isomorphism and  since $\dim_k(H)$ is odd, $O_k(H,q)\cong \Sp(2n-2)$ (see \cite[Proposition 4.1.7]{APA}). Therefore, we can find an involution $t\in G$ such that  $t(x)=y$. Since $n\geq 4$, $\Sp(2n-2)$ is simple. Thus,  $G$ has no subgroups of index 2.
\end{proof}

\begin{prop}\label{Prop:No isocategorical sobre R}
Let $(V,q)$ be a quadratic spaces over a finite field of characteristic two with $\dim_{k}(V)=2n$ and $n\geq 4$. The affine symplectic group $\ASp_k(V)$ and the affine pseudo-symplectic group $\APs_k(V,\omega_q)$ are isocategorical over $\mathbb{C}$ but not over  $\mathbb{R}$.
\end{prop}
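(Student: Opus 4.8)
The plan is to treat the two fields separately, combining the Galois-algebra characterization of isocategorical groups (Proposition \ref{Prop equivalencias con isocat existencia bigalois}) with the formally-real classification (Corollary \ref{Corol Clasification over Q nad Z[1/2]}). Throughout I write $G := \ASp_k(V) = V\rtimes \Sp_k(V)$, with $V$ its natural normal abelian subgroup and $G/V \cong \Sp_k(V)$.

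For the equivalence over $\mathbb{C}$, I would fix $\alpha\in Z^2(V,\mathbb{C}^*)$ with $\Alt(\alpha)=\omega_q$, form the simple $V$-Galois algebra $B=\mathbb{C}_\alpha[V]$, and set $A=\Ind_V^{G}(B)$. Since over $\mathbb{C}$ one has $H^2(V,\mathbb{C}^*)\cong \bigwedge^2\widehat V$ (Proposition \ref{Prop:exis de 2-cociclo para simplectco}), the class $[\alpha]$ depends only on $\omega_q$; hence $\St(B)=\Sp(V,\omega_q)\supseteq \Sp_k(V)$ and $B^{(g)}\cong B$ for every $g\in G$. Proposition \ref{Prop: exct sequence Aut_G} then produces an exact sequence $1\to\widehat V\to \Aut_{G}(A)\to \Sp_k(V)\to 1$, which by Proposition \ref{Prop:Isomorfismo Automorfismo Galois y auto grupoide equivariant} is exactly the crossed product defining $\APs_k(V,\omega_q)=\widehat V\#_\theta\Sp_k(V)$. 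As the orders agree, Proposition \ref{Prop equivalencias con isocat existencia bigalois}(c) yields the isocategorical equivalence over $\mathbb{C}$.

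For the non-equivalence over $\mathbb{R}$, since $\mathbb{R}$ is formally real closed I would use Corollary \ref{Corol Clasification over Q nad Z[1/2]} and show that no real or semi-real torsor datum of $G$ can have automorphism group isomorphic to the nonsplit group $\APs_k(V,\omega_q)$. First a normal-subgroup analysis: because $n\geq 4$, $\Sp_k(V)$ is quasi-simple and transitive on the nonzero vectors of $V$, so the only nontrivial normal abelian subgroup of $G$ is $V$, and $V$ has no $\Sp_k(V)$-invariant hyperplane. Hence any nontrivial torsor datum must have $S=V$; the semi-real case, which would force $N$ to be an index-two $\Sp_k(V)$-invariant subgroup of $V=S$, is thereby excluded, leaving only real data $(V,\mathbb{R},V,\sigma,\gamma_\sigma)$. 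For such a datum $\sigma$ takes values in $\mu_2$, and over $\mu_2$ the class $[\sigma]\in H^2(V,\mu_2)$ is no longer determined by $\Alt(\sigma)=\omega_q$ alone: it corresponds to a genuine non-degenerate quadratic form $q_\sigma$ on $V$ with polar form $\omega_q$ (the identification $H^2(V,\mu_2)\leftrightarrow\Quad(V)$ underlying Lemma \ref{Lemma2:cohomologia ortogonales}). The invariance condition of Definition \ref{defin: real torsor data} then requires $[\sigma]=[\sigma^g]$ for all $g\in G/V=\Sp_k(V)$, i.e.\ $\Sp_k(V)\subseteq O(V,q_\sigma)$. But $\Sp_k(V)$ permutes the quadratic forms refining $\omega_q$ transitively within each Arf class and fixes none of them, so $\Sp_k(V)\not\subseteq O(V,q_\sigma)$ and the condition fails. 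Thus the only admissible real/semi-real data give automorphism group $\ASp_k(V)$ itself, and since $\APs_k(V,\omega_q)$ is a nonsplit extension (Proposition \ref{Proposition:generalizacion Teorema Extra especial}) it is not isomorphic to $\ASp_k(V)$; the two are therefore not isocategorical over $\mathbb{R}$.

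The hard part is precisely this contrast between $\mathbb{C}$ and $\mathbb{R}$: over $\mathbb{C}$ the symmetric part of the twisting cocycle is a coboundary, so the full symplectic group stabilizes $B$, whereas over $\mathbb{R}$ the symmetric part becomes a cohomological invariant, namely a quadratic form, whose stabilizer collapses to $O(V,q_\sigma)\subsetneq \Sp_k(V)$. Making this rigorous -- the identification of $\St(B)$ with $O(V,q_\sigma)$ through $\mu_2$-valued cohomology, and the verification that $\Sp_k(V)$ stabilizes no non-degenerate quadratic form refining $\omega_q$ -- is where the real content of the proof lies; the normal-subgroup bookkeeping and the $\mathbb{C}$-side construction are comparatively routine applications of the machinery already developed.
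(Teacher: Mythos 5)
Your overall strategy coincides with the paper's: over $\mathbb{C}$ you realize $\APs_k(V,\omega_q)$ as $\Aut_G(\Ind_V^G(\mathbb{C}_\alpha[V]))$ via Propositions \ref{Prop: exct sequence Aut_G} and \ref{Prop:Isomorfismo Automorfismo Galois y auto grupoide equivariant} (this is exactly what Theorem \ref{Teor:Main4}, which the paper invokes, packages), and over $\mathbb{R}$ you argue that $V$ is the only nontrivial normal abelian subgroup of $\ASp_k(V)$, that this kills all semi-real torsor data, and that real torsor data are obstructed because stabilizers of non-degenerate classes in $H^2(V,\mu_2)$ are orthogonal groups. The $\mathbb{C}$ half, the normal-subgroup analysis, the semi-real exclusion, and the final appeal to nonsplitness (Proposition \ref{Proposition:generalizacion Teorema Extra especial}) are all correct and essentially identical to the paper's proof.

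There is, however, a genuine gap in your exclusion of real torsor data. Definition \ref{defin: real torsor data} only demands that $\sigma\in Z^2(V,\mu_2)$ be non-degenerate and that $[\sigma]=[\sigma^g]$ for all $g\in G/V$; it does \emph{not} demand $\Alt(\sigma)=\omega_q$, which you assume from the outset. This is not harmless: for $k=\mathbb{F}_{2^m}$ with $m>1$ the forms $(x,y)\mapsto e^{\pi i \Tr(c\langle x,y\rangle)}$, $c\in k^*$, are $2^m-1$ pairwise distinct non-degenerate $\Sp_k(V)$-invariant alternating forms, so whole families of candidate data $(V,\sigma)$ with $\Alt(\sigma)\neq\omega_q$ are never addressed by your argument — and the classification (Theorem \ref{main result 3}, Corollary \ref{Corol Clasification over Q nad Z[1/2]}) requires ruling out \emph{all} of them; this is why the paper quantifies over an arbitrary non-degenerate class $\mu\in H^2(V,\mu_2)$. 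Second, even in the case you do treat, your justification — that $\Sp_k(V)$ acts transitively on the refinements of $\omega_q$ within each Arf class — is asserted without proof, and for $m>1$ it is a nontrivial fact (true, but it needs a field-reduction argument identifying $\mathbb{F}_2$-refinements of the trace form with traces of $\mathbb{F}_{2^m}$-refinements of $\langle-,-\rangle$); only the far weaker statement that no refinement is fixed is needed. Both defects are repaired at once by the observation you already use for the normal-subgroup step: if $[\sigma]$ were $G/V$-invariant, the quadratic form $q_\sigma(x)=\sigma(x,x)$ (well defined on the class, with $\omega_{q_\sigma}=\Alt(\sigma)$) would be $\Sp_k(V)$-invariant, hence constant on $V\setminus\{0\}$ by transitivity on nonzero vectors; then each character $\Alt(\sigma)(x,-)$ would be trivial precisely on $\{0,x\}$ or on all of $V$, which for $|V|>4$ forces $\Alt(\sigma)$ to be trivial, contradicting non-degeneracy of $\sigma$. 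In substance this is what the paper's appeal to Lemmas \ref{Lemma1:cohomologia ortogonales} and \ref{Lemma2:cohomologia ortogonales} delivers: the stabilizer of any non-degenerate class is the orthogonal group of its associated quadratic form, and no such orthogonal group contains $\Sp_k(V)$.
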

\begin{proof}
If we view the skew-symmetric form $\omega_q$ as an element in $Z^2(V,\mathbb{C}^*)$, then $\operatorname{St}([\sigma])=\Sp(V,\omega_q)$. Therefore, by Theorem \ref{Teor:Main4} the groups $\ASp_k(V)$ and  $\APs_k(V,\omega_q)$ are isocategorical over the field of complex numbers.

Let $S\lhd \ASp(V)$ be an abelian normal subgroup. Since $\Sp(V)$ is a non-abelian simple group and $V\lhd SV\lhd \ASp(V)$, it follows that $S/S\cap V \cong SV/V\lhd\Sp(V)$, thus $S\subset V$. Since $S\lhd \ASp(V)$, it follows that $S=V$. Thus the unique normal abelian  subgroup of $\ASp(V)$ is $V$. Hence, do not exist a semi-real torsor datum over $\ASp(V)$. Let $(V,\mu)$, where $\mu \in H^2(V,\mu_2)$ is a non-degenerate cohomology class. By Lemma \ref{Lemma1:cohomologia ortogonales} and Lemma \ref{Lemma2:cohomologia ortogonales}, $\operatorname{St}([\mu])=O_k(V,q)$ and the orthogonal group is a proper subgroup of the symplectic group. Therefore, the pair $(V,\mu)$ is not a real torsor. If follows that   $\ASp(V)$ does not have real or semireal torsor datum, hence by Theorem \ref{main result 3} every group isocategorical to $\ASp(V)$ group  is isomorphic to  $\ASp(V)$.
\end{proof}
\begin{prop}\label{Prop:Pseudo simplect y affine orthogonal son isocat}
Let $(V,q)$ be a quadratic linear space over a finite field of characteristic two. The following pairs of groups are concrete examples of non-isomorphic groups isocategorical over $\mathbb{Q}$.
\begin{enumerate}
\item $\widehat{V}\rtimes O_k(V,q)$ and $\Ps(V,q)$, where $\dim_{k}(V)=2n$, $n\geq 4$,
\item $\widehat{V}\rtimes \Omega_k(V,q)$ and $\widehat{V}\#_\theta\Omega_k(V,q)$, where $\dim_{k}(V)=2n$, $n\geq 4$,
\item $\mathbb{F}_2^6\rtimes P_2$ and  $\mathbb{F}_2^6\#_\theta P_2$, where $P_2 $ is a  Sylow 2-subgroup of $O(\mathbb{F}_2^6,q)$.
\end{enumerate}
\end{prop}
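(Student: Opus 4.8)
The statement splits, for each of the three pairs, into two independent assertions: that the two groups are isocategorical over $\mathbb{Q}$, and that they are non-isomorphic. I would treat these separately. Fix a $\mu_2$-valued bicharacter $\sigma=b\in\Bil(V)$ with $\Tr(b)=\widehat{q}$ (such $b$ exists by Lemma \ref{Lemma1:cohomologia ortogonales}); then $(V,\sigma)$ is a real torsor datum over $\mathbb{Q}$ whose simple $V$-Galois algebra $B=\mathbb{Q}_\sigma[V]$ satisfies $\Aut_V(B)\cong\widehat{V}$ and, by Corollary \ref{Corol:existencia 2-cociclo quadratic}, has stabilizer $\operatorname{St}([\sigma])=O(V,\widehat{q})$. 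In each pair the first group is a split extension $\widehat{V}\rtimes H$ and the second the crossed product $\widehat{V}\#_\theta H$ with the Weil cocycle $\theta$ of Proposition-Definition \ref{weil action}, for one and the same subgroup $H\subseteq O(V,\widehat{q})$: namely $H=O_k(V,q)$, $H=\Omega_k(V,q)$, and $H=P_2$ respectively, where for the third pair $k=\mathbb{F}_2$, so that $O_{\mathbb{F}_2}(V,q)=O(V,\widehat{q})$ and the second group is $\Ps(V,q)=\Ps_k(V,q)=\widehat{V}\#_\theta O_k(V,q)$.

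For isocategoricality I would apply Corollary \ref{Corol Clasification over Q nad Z[1/2]}(1) with $G_1=\widehat{V}\rtimes H$ as the ambient group. Transporting $\sigma$ along the $H$-equivariant isomorphism $V\cong\widehat{V}$ furnished by the non-degenerate form $\omega_q$ gives a dual cocycle $\tau\in Z^2(\widehat{V},\mu_2)$, and $(\widehat{V},\tau)$ is a real torsor datum for $G_1$ since $G_1/\widehat{V}=H\subseteq\operatorname{St}([\tau])$. Proposition \ref{Prop:Isomorfismo Automorfismo Galois y auto grupoide equivariant}, together with the exact sequence of Proposition \ref{Prop estructura grupoide accion e invariante}(b), identifies the automorphism group of the associated Galois algebra $A=\Ind_{\widehat{V}}^{G_1}(\mathbb{Q}_\tau[\widehat{V}])$ with the crossed product $\widehat{V}\#_\theta H=G_2$. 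This is exactly the mechanism of Theorem \ref{Teor:Main4}, carried out for the subgroup $H$ in place of the full stabilizer, and it yields that $G_1$ and $G_2$ are isocategorical over $\mathbb{Q}$.

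For non-isomorphism of pairs (1) and (2), where $n\ge 4$, the semidirect products split over $H$, whereas Proposition \ref{Proposition:generalizacion Teorema Extra especial} asserts precisely that the crossed products $\Ps_k(V,q)=\widehat{V}\#_\theta O_k(V,q)$ and $\widehat{V}\#_\theta\Omega_k(V,q)$ are non-split. To pass from ``split versus non-split extension'' to ``non-isomorphic group'' I would show $\widehat{V}$ is characteristic by identifying it with the largest normal $2$-subgroup $O_2(G)$: since for $n\ge 4$ the group $\Omega_k(V,q)$ is simple and $O_k(V,q)$ has trivial centre in characteristic two, neither $H=O_k(V,q)$ nor $H=\Omega_k(V,q)$ has a nontrivial normal $2$-subgroup, whence $O_2(\widehat{V}\rtimes H)=O_2(\widehat{V}\#_\theta H)=\widehat{V}$. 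Any abstract isomorphism would then restrict to an isomorphism of the two extensions of $H$ by $\widehat{V}$, which cannot carry a split one to a non-split one.

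The delicate case, and the main obstacle, is pair (3): here $\dim_{\mathbb{F}_2}V=6$ gives $n=3<4$, so Proposition \ref{Proposition:generalizacion Teorema Extra especial} does not apply, and both $\mathbb{F}_2^6\rtimes P_2$ and $\mathbb{F}_2^6\#_\theta P_2$ are $2$-groups, collapsing the $O_2$-argument. Non-splitting of $\mathbb{F}_2^6\#_\theta P_2$ is equivalent to non-splitting of $\Ps(\mathbb{F}_2^6,\widehat{q})$ over the full $O(\mathbb{F}_2^6,q)$, because the restriction $H^2(O(\mathbb{F}_2^6,q),\mathbb{F}_2^6)\to H^2(P_2,\mathbb{F}_2^6)$ to the Sylow $2$-subgroup is injective on the $2$-torsion coefficients; but in dimension $6$ this class must be evaluated by hand or verified with \gap{}. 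The non-isomorphism of the two resulting $2$-groups I would likewise confirm computationally, either by exhibiting a characteristic copy of $\mathbb{F}_2^6$ (e.g.\ via the Frattini or lower central series) or simply by checking in \gap{} that no isomorphism exists. The recurring technical heart of the proposition is thus the step from inequivalence of extensions to non-isomorphism of groups: clean for (1) and (2) through $O_2(G)=\widehat{V}$, but requiring an explicit finite computation for the $2$-group pair (3).
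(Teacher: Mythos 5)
Your proposal is correct and follows essentially the same route as the paper: isocategoricality via the crossed-product mechanism of Theorem \ref{Teor:Main4} (i.e.\ Theorem \ref{main result 2} combined with Proposition \ref{Prop:Isomorfismo Automorfismo Galois y auto grupoide equivariant}), non-isomorphism for pairs (1) and (2) from the non-splitness statement of Proposition \ref{Proposition:generalizacion Teorema Extra especial}, and a \gap\ computation for pair (3). The two points you spell out --- running the Theorem \ref{Teor:Main4} argument for a subgroup $H$ of the full stabilizer rather than $\operatorname{St}([\sigma,\gamma])$ itself, and the $O_2(G)=\widehat{V}$ argument that upgrades ``split versus non-split extension'' to ``non-isomorphic groups'' --- are precisely the steps the paper's short proof leaves implicit, and your treatment of both is sound.
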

\begin{proof}
By Theorem \ref{Teor:Main4} we only need to check that  the pairs of group are non-isomorphic. By Proposition \ref{Proposition:generalizacion Teorema Extra especial}, the groups in $(2)$ and $(3)$ are non-isomorphic.

Now, we need to see that the groups  $\mathbb{F}_2^6\rtimes P_2$ and  $\mathbb{F}_2^6\#_\theta P_2$ are non-isomorphic. This can be done easily using the function \textsf{IsIsomorphicPGroup(G,R)}, included in the \gap~package \anupq, \cite{GAP4}.

\end{proof}
\begin{obs}
For the construction of $\mathbb{F}_2^6\rtimes P_2$ and  $\mathbb{F}_2^6\#_\theta P_2$ in \gap\  is useful to know that the exact sequence $$1\to \widehat{V}\to \Ps_{\mathbb{F}_2}(V,q)\to O_{\mathbb{F}_2}(V,q_c)\to 1,$$ is isomorphic to the exact sequence $$1\to \text{Inn}(E)\to \Aut(E)\to \text{Out}(E)\to 1,$$where $E$ is an extra-special 2-group  and  $ \text{Inn}(E), \text{Out}(E)$  are the groups of inner and outer automorphisms of $E$ respectively, see \cite{Grupo-Extra-special}.
\end{obs}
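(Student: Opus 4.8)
The plan is to prove the final \textbf{Remark}, namely that the exact sequence
\[
1\to \widehat{V}\to \Ps_{\mathbb{F}_2}(V,q)\to O_{\mathbb{F}_2}(V,q_c)\to 1
\]
is isomorphic to the automorphism sequence
\[
1\to \operatorname{Inn}(E)\to \Aut(E)\to \operatorname{Out}(E)\to 1
\]
of an extra-special $2$-group $E$. I would begin by recalling the standard dictionary between extra-special $2$-groups and quadratic spaces over $\mathbb{F}_2$. If $E$ is extra-special of order $2^{2n+1}$ with center $Z(E)=\langle z\rangle\cong\mu_2$, then $\overline{E}:=E/Z(E)$ is an elementary abelian $2$-group, hence an $\mathbb{F}_2$-vector space of dimension $2n$. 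The commutator and squaring maps descend to $\overline{E}$: the form $B(\bar a,\bar b):=[a,b]$ is a nondegenerate symplectic $\mathbb{F}_2$-bilinear form, and $q_c(\bar a):=a^2\in\{1,z\}$ is a well-defined quadratic form with $B$ as its associated bilinear form. The type of $E$ (plus or minus) corresponds exactly to the Witt type of $(\overline{E},q_c)$, so I would fix $E$ to be the extra-special group whose associated quadratic space $(\overline{E},q_c)$ is isometric to the given $(V,q)$. Identifying $\overline{E}\cong V$ via this isometry is the first step.

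Next I would identify each term of the two sequences under this dictionary. For the kernels: $\operatorname{Inn}(E)=E/Z(E)\cong\overline{E}\cong V$, and I must match this with $\widehat{V}$. The point is that $\widehat{V}\cong V$ canonically via the nondegenerate form $\omega_{q}=\omega_{\widehat q}$ arising from $q$ (equivalently via $B=q_c$), so $\operatorname{Inn}(E)\cong\widehat{V}$ as groups; conjugation by $a\in E$ acts on $E$ and descends, giving the inner automorphism attached to $\bar a\leftrightarrow B(\bar a,-)\in\widehat{V}$. For the quotients: every automorphism of $E$ fixes the center $Z(E)$ (it is characteristic), hence induces an automorphism of $\overline{E}$ preserving both $B$ and $q_c$, giving a homomorphism $\Aut(E)\to O(\overline{E},q_c)=O_{\mathbb{F}_2}(V,q)$. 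A classical fact (which I would cite via \cite{Grupo-Extra-special}, or reprove using that inner automorphisms act trivially on $\overline{E}$ and that the center is fixed) is that this map is surjective with kernel exactly $\operatorname{Inn}(E)$, so $\operatorname{Out}(E)\cong O_{\mathbb{F}_2}(V,q_c)$. This pins down all three vertical isomorphisms.

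The heart of the argument, and the step I expect to be the main obstacle, is checking that the two middle groups $\Ps_{\mathbb{F}_2}(V,q)$ and $\Aut(E)$ agree compatibly with the chosen isomorphisms of kernel and cokernel, i.e. that the extension cocycles coincide. Both sequences are extensions of $O_{\mathbb{F}_2}(V,q)$ by $V\cong\widehat{V}$, and an extension of $Q$ by an abelian kernel $K$ (with a fixed $Q$-action on $K$) is classified up to equivalence by a class in $H^2(Q,K)$; so it suffices to show the two extension classes in $H^2(O_{\mathbb{F}_2}(V,q),\widehat V)$ coincide. On the $\Ps$ side the relevant cocycle is $\theta$ from Proposition-Definition \ref{weil action}, built from the cochains $\eta_g\in C_0^1(V,\mu_n)$ with $b^g/b=\delta(\eta_g)$ (Corollary \ref{Corol:existencia 2-cociclo quadratic}); on the $\Aut(E)$ side the cocycle records the obstruction to lifting an isometry $g\in O(\overline E,q_c)$ to an automorphism of $E$ normalizing a chosen transversal. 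The key computation is to exhibit, for each $g$, a canonical lift to $\Aut(E)$ and to verify that the resulting factor set equals $\theta$; this reduces, after unwinding both definitions, to the identity relating the squaring/commutator data $(q_c,B)$ of $E$ to the bicharacter $b$ with $\operatorname{Tr}(b)=q$. I would verify this by fixing a polarization $V=U\oplus W$ (a Lagrangian decomposition for $B$), presenting $E$ by generators indexed by a basis adapted to $U\oplus W$ with the standard extra-special relations, and matching these relations term by term with the twisted group algebra structure of $\mathbb{C}_b[V]$ underlying the Weil action; both sides then reproduce the same cocycle, completing the identification of the two exact sequences.
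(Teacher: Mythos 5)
Your proposal is correct, but it cannot be compared with a proof in the paper because the paper offers none: the remark is stated as a known fact with a pointer to Griess \cite{Grupo-Extra-special}, where the identifications $\operatorname{Inn}(E)\cong E/Z(E)$, $\operatorname{Out}(E)\cong O_{\mathbb{F}_2}(\overline{E},q_c)$ and the extension-theoretic properties of $\Aut(E)$ are established. Your reconstruction of the dictionary (squaring map gives $q_c$, commutator gives the symplectic form $B$, type of $E$ matches Witt type, $\operatorname{Inn}(E)\cong\widehat{V}$ via the nondegenerate $B$, and $\Aut(E)\twoheadrightarrow O(\overline{E},q_c)$ with kernel $\operatorname{Inn}(E)$) is accurate, and your reduction to comparing classes in $H^2(O_{\mathbb{F}_2}(V,q),\widehat{V})$ is sound once one checks, as you implicitly do, that the $O(V,q)$-actions on the kernel agree (in $\Ps(V,q)$ one computes $g\cdot\psi=\alpha_g^{-1}\psi\alpha_g=\psi\circ g$, matching conjugation in $\Aut(E)$). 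The one comment worth making is that the step you flag as ``the main obstacle'' is actually nearly tautological and needs no polarization or presentation of $E$ by generators and relations: since the bicharacter $b$ and the cochains $\eta_g\in C_0^1(V,\mu_2)$ take values in $\mu_2$, the set $E:=\{\pm u_x : x\in V\}$ is a subgroup of the unit group of $\mathbb{Q}_b[V]$ which is extra-special with squaring $u_x^2=b(x,x)=\widehat{q}(x)$ and commutator $\omega_q$, and every Weil automorphism $(\psi,g)\colon u_x\mapsto\psi(g(x))\eta_g(x)u_{g(x)}$ preserves $E$. Restriction therefore gives a homomorphism $\Ps_{\mathbb{F}_2}(V,q)\to\Aut(E)$ commuting with the two exact sequences; it is injective (trivial action on $E$ forces $g=1$ on $E/Z(E)$ and then $\psi=1$), and surjective by the order count $|\Aut(E)|=2^{2n}\,|O_{\mathbb{F}_2}(V,q_c)|=|\Ps_{\mathbb{F}_2}(V,q)|$ from Griess. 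In particular the factor sets coincide on the nose, because $\theta(g,h)=\alpha_g^{-1}\alpha_h^{-1}\alpha_{gh}$ is computed from the very lifts $\alpha_g|_E$ you would choose in $\Aut(E)$; this shortcut replaces your term-by-term matching of relations and yields the isomorphism of sequences directly.
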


\subsection{Semi-real torsors data and real Weil representations associated to finite symplectic modules}
The aim of this section is to describe a  systematic way  of constructing  torsor data with $\Gal(K|k)$ non trivial.

Let $N$ be a finite  abelian group, $k$ be a field with a primitive root of unity of order the exponent of $N$, $\sigma \in Z^2(N,k^*)$  a non degenerate 2-cocycle and let $k\subset K$ be an abelian Galois field extension.

For the abelian group $S:=N \oplus \Gal(K|k)$ we define the torsor datum $(S,K,N,\gamma_\sigma)$, where $\gamma_\sigma:S\times N\to k^*$ is the pairing defined by $\gamma|_{\Gal(K|k)\times N}=1$ and $\gamma_\sigma|_{N\times N}=\Alt(\sigma)$.

Let us define $$X=\{g\in \Aut(N)| [\sigma]=[\sigma^g] \text{ as elements in }  H^2(N,K^*) \}$$ 
and $$Y=\{g\in \Aut_N(S)| [(\sigma,\gamma_\sigma)]^g=[(\sigma,\gamma_\sigma)] \text{ as elements in }  H^2_S(N,K^*)\},$$ where $\Aut_N(S)=\{g\in \Aut(S) | g|_N\in \Aut(N)\}$.
\begin{teor}\label{Teor:Ultimo}
The restriction map $$r:Y\to X, g\mapsto g|_N$$ defines an exact sequence of groups $$1\to \Aut(\Gal(K|k))\to Y\to X\to 1.$$
\end{teor}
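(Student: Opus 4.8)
Writing $\Gamma:=\Gal(K|k)$, the extension is abelian, so $S=N\oplus\Gamma$ is an abelian group. An automorphism $g$ of $S$ with $g|_N\in\Aut(N)$ necessarily maps $N$ onto $N$, hence is completely described by a triple $(a,b,d)$ with $a\in\Aut(N)$, $d\in\Aut(\Gamma)$ and $b\in\Hom(\Gamma,N)$, acting by $g(n,\tau)=(a(n)+b(\tau),d(\tau))$. In this language $r$ is the projection $(a,b,d)\mapsto a$; the kernel of $r$ on all of $\Aut_N(S)$ is $\{(\id,b,d)\}\cong\Hom(\Gamma,N)\rtimes\Aut(\Gamma)$, and $d\mapsto(\id,0,d)$ gives the inclusion $\Aut(\Gamma)\hookrightarrow\Aut_N(S)$. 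Thus the statement reduces to two assertions: that $(\id,b,d)$ lies in $Y$ if and only if $b=0$ (this pins down $\ker(r|_Y)\cong\Aut(\Gal(K|k))$ and gives exactness on the left), and that every $a\in X$ is hit by some element of $Y$ (surjectivity). Well-definedness of $r:Y\to X$ is immediate once one notes that the $N$-component of every $H^2_S$-coboundary $\partial(\nu)$ is $\delta(\nu)$, so membership of $g=(a,b,d)$ in $Y$ forces $[\sigma^a]=[\sigma]$ in $H^2(N,K^*)$.

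\textbf{The computational engine.} The key formulas are $\sigma^g=\sigma^a$ and
\[
\gamma_\sigma^{g}\bigl((n,\tau),x\bigr)=\Alt(\sigma)\bigl(a(n)+b(\tau),\,a(x)\bigr).
\]
Because $\sigma$ and $\gamma_\sigma$ take values in $k^*$, which $\Gamma$ fixes, the pair $(\sigma^g,\gamma_\sigma^g)$ is again a genuine $2$-cocycle in $Z^2_S(N,K^*)$ for the original module structure (the twist by $d$ being invisible on $k^*$), so the comparison inside $H^2_S(N,K^*)$ is legitimate. By the coboundary formula for a $1$-cochain $\nu:N\to K^*$, namely $\partial(\nu)=\bigl(\delta(\nu),\,(s,x)\mapsto \overline{s}(\nu(x))/\nu(x)\bigr)$, the pair $(\sigma^g,\gamma_\sigma^g)$ is cohomologous to $(\sigma,\gamma_\sigma)$ exactly when the ratio equals such a $\partial(\nu)$. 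The one arithmetic fact I would use throughout is that $\widehat N_K=\Hom(N,K^*)$ takes values in $\mu_{\exp(N)}(K)=\mu_{\exp(N)}(k)\subseteq k^*$, since $k$ contains a primitive root of unity of order $\exp(N)$; equivalently, $\Gamma$ acts trivially on $\widehat N_K$.

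\textbf{The kernel.} Take $g=(\id,b,d)$. Here $\sigma^g=\sigma$, so the $N$-component of the ratio is trivial and any witnessing $\nu$ must satisfy $\delta(\nu)=1$, i.e. $\nu\in\widehat N_K$; by the fact above $\nu$ is then $k^*$-valued, whence the mixed component $\overline{s}(\nu(x))/\nu(x)$ of $\partial(\nu)$ is trivial. Comparing with the mixed component of the ratio, which is $\Alt(\sigma)(b(\tau),x)$, and invoking non-degeneracy of $\Alt(\sigma)$, forces $b=0$. Conversely $(\id,0,d)$ lies in $Y$ (take $\nu=1$). Hence $\ker(r|_Y)=\{(\id,0,d):d\in\Aut(\Gamma)\}\cong\Aut(\Gal(K|k))$, as required.

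\textbf{Surjectivity (the main obstacle).} This is a Hilbert~90 phenomenon in disguise, and it is the only delicate point. Given $a\in X$, choose $\eta:N\to K^*$ with $\sigma^a=\sigma\,\delta(\eta)$; applying $\Alt$ (which kills coboundaries since $N$ is abelian) also gives $\Alt(\sigma^a)=\Alt(\sigma)$. The temptation is to lift $a$ by $(a,0,\id)$, but this fails when $\eta$ is not $k^*$-valued: since $\delta(\eta)$ is $k^*$-valued one checks that $x\mapsto c(\tau,x):=\overline{\tau}(\eta(x))/\eta(x)$ is a character of $N$ and that $\tau\mapsto c(\tau,\cdot)$ is a homomorphism $\Gamma\to\widehat N$, measuring the Galois non-invariance of $\eta$. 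The idea is to cancel this defect with the off-diagonal parameter $b$. Because $\Alt(\sigma)$ is non-degenerate and $a$ is invertible, the map $v\mapsto[x\mapsto\Alt(\sigma)(v,a(x))]$ is an isomorphism $N\xrightarrow{\sim}\widehat N$; defining $b$ as the composite of $c$ with its inverse makes the mixed component $\Alt(\sigma)(b(\tau),a(x))$ of the ratio equal to $c(\tau,x)$, while the diagonal identity $\Alt(\sigma^a)=\Alt(\sigma)$ keeps the $N$-part of $\gamma_\sigma^g$ equal to $\gamma_\sigma$. Then $\partial(\eta)$ matches the full ratio, so $g=(a,b,\id)\in Y$ with $r(g)=a$. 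The obstacle to anticipate is precisely this step: one must not lift by $(a,0,\id)$ but rather observe that the Galois defect of the trivializing cochain $\eta$ is cancelled by a unique choice of $b$, which exists only because $\Alt(\sigma)$ is non-degenerate. Assembling the three pieces yields the exact sequence $1\to\Aut(\Gal(K|k))\to Y\to X\to 1$.
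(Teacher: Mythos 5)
Your proof is correct and follows essentially the same route as the paper's: the kernel is identified using non-degeneracy of $\Alt(\sigma)$ together with the fact that characters of $N$ are $k^*$-valued (hence Galois-invariant), and surjectivity is obtained by converting the Galois defect $\tau\mapsto \overline{\tau}(\eta)/\eta$ of the trivializing cochain into a homomorphism $b:\Gal(K|k)\to N$ via the isomorphism $N\cong\widehat{N}$ induced by $\Alt(\sigma)$ and the invertibility of $a$ --- which is exactly the paper's element $n(g,a)$ and its lift $g'(x\oplus a)=g(x)n(g,a)\oplus a$. Your $(a,b,d)$-coordinates on $\Aut_N(S)$ and the explicit coboundary bookkeeping present the same argument a bit more transparently (and with consistent sign conventions), but the underlying ideas coincide.
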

\begin{proof}
Let us first prove that the kernel of  $r:Y\to X, g\mapsto g|_N$ is isomorphic to $ \Aut(\Gal(K|k))$. Let $g\in Y$ and $\eta_g:N\to K^*$ such that $(\sigma^g,\gamma_\sigma^g)=\partial(\eta_g)(\sigma,\gamma_\sigma)$. If $g|_N=\id_N$, thus $\eta_g:N\to K^*$ is a character and $\eta_g(x)\in k^*$ for all $x\in N$. Therefore,  $ \gamma(g(a),x)=\gamma(a,x)$ for all $a \in \Gal(K|k), x\in N$. Thus, $\gamma(g(a)a^{-1},x)=1$ for all $a \in \Gal(K|k), x\in N$. Therefore, $g(a)a^{-1}\in \Gal(K|k)$, that is, $g(a)\in \Gal(K|k)$ for all $a\in \Gal(K|k)$. 

Now we want to show that $r$ is surjective. Let $g\in Y$ and $\eta_g:N\to K^*$ such that $\sigma^g/\sigma=\delta(\eta_g).$ For each $a \in \Gal(K|k)$, the map $\eta_g/a(\eta_g)$ is a character that does not depend on  the choice of $\eta_g$.   Then, there exists a unique $n(g,a)\in N$ such that $\eta_g/a(\eta_g)=\Alt(\alpha)(n(g,a),g(x))$ for all $x\in N$.

Since 
 \begin{align*}
 \Alt(\sigma)(n(g,a)n(g,b),g(x)) &= (\eta_g/a(\eta_g))\eta_g/b(\eta_g)\\
 &=(\eta_g/a(\eta_g))a\eta_g/ab(\eta_g)\\
 &=\eta_g/ab\eta_g\\
 &=  \Alt(\sigma)(n(g,ab),g(x))
 \end{align*}it follows that $n(g,ab)=n(g,a)n(g,b)$, where we have used that $\eta/b\eta(x)\in k^*$ for all $x\in N, b\in A$. Define $g'\in \Aut_N(N\oplus A)$ by $g'(x\oplus a)= g(x)n(g,a)\oplus a$. Then 
 
 \begin{align*}
\frac{\gamma^{g'}}{\gamma}(x\oplus a,y) &= \frac{\gamma(g'(x\oplus a),g(y))}{\gamma(x\oplus a,y)}\\ 
&= \frac{\gamma(g(x)n(g,a)\oplus a,g(y)))}{\gamma(x,y)\gamma(a,y)}\\
&= \frac{\gamma(g(x),g(y)) \gamma (n(g,a),g(y))\gamma(a,g(y))}{\gamma(x,y)\gamma(a,y)}\\
&=\gamma (n(g,a),g(y))=\frac{\eta_g}{a\eta_g}(y),
 \end{align*}
so $g'\in Y$ and $r(g')=g$.
\end{proof}

\begin{obs}
\begin{itemize}
\item The group $Y$ only depends on $[\sigma] \in H^2(N,K^*)$.
\item If $\Gal(K|k)=\mathbb{Z}/2Z$, then $X\cong Y$.
\end{itemize}

\end{obs}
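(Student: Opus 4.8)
The statement splits into two independent claims, and I would handle the second one first since it is immediate. By Theorem \ref{Teor:Ultimo} there is an exact sequence $1\to \Aut(\Gal(K|k))\to Y\xrightarrow{\,r\,} X\to 1$, where $r(g)=g|_N$. If $\Gal(K|k)\cong \mathbb{Z}/2\mathbb{Z}$ then $\Aut(\Gal(K|k))=\Aut(\mathbb{Z}/2\mathbb{Z})$ is the trivial group, so $\ker r=1$ and $r$ becomes an isomorphism $Y\cong X$. No computation beyond Theorem \ref{Teor:Ultimo} is needed.

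For the first bullet the key remark is that, by its very definition, $Y$ is the stabilizer in $\Aut_N(S)$ of the single class $[(\sigma,\gamma_\sigma)]\in H^2_S(N,K^*)$; thus it suffices to prove that this class, or at least its stabilizer, is governed by $[\sigma]\in H^2(N,K^*)$. I would reduce everything to the skew form $\omega:=\Alt(\sigma)$. On one hand $\gamma_\sigma$ is manufactured solely from $\omega$ (it is trivial on $\Gal(K|k)\times N$ and equals $\omega$ on $N\times N$), and $\Alt$ annihilates coboundaries, so $\gamma_\sigma$ depends only on $[\sigma]$; moreover, since $K^*$ is divisible in the relevant cases, Proposition \ref{Prop:exis de 2-cociclo para simplectco} identifies $H^2(N,K^*)\cong \bigwedge^2\widehat{N}$ via $\Alt$, so that $[\sigma]$ corresponds exactly to $\omega$. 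On the other hand, replacing $\sigma$ by a cohomologous cocycle $\sigma'=\sigma\,\delta(\mu)$ leaves $\gamma_\sigma$ unchanged and alters the pair only by $(\delta(\mu),1)$; using the explicit coboundary formula $\partial(\mu)=\bigl(\delta(\mu),\,[(g,x)\mapsto {}^{g}\mu(x)/\mu(x)]\bigr)$ one checks that this alteration is a total coboundary, so $[(\sigma,\gamma_\sigma)]$---and hence $Y$---is unchanged. Combining the two, $Y$ depends only on $\omega$, that is, only on $[\sigma]\in H^2(N,K^*)$.

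The delicate point, and the step I expect to absorb the real work, is verifying that the change of representative $(\delta(\mu),1)$ is a coboundary in the double-complex cohomology $H^2_S(N,K^*)$ rather than merely in $H^2(N,K^*)$: one must match both coordinates of a total coboundary at once and in particular force the $\Gal(K|k)\times N$-component of $\partial(\mu)$ to be trivial. Here I would use that $\sigma'/\sigma=\delta(\mu)$ already lies in $Z^2(N,k^*)$ with $\Alt(\delta(\mu))=1$, so the defect is symmetric, together with the normalized-cochain description of Proposition \ref{Prop:chomologia normalizada} and the cocycle relations \eqref{C2}--\eqref{C3}; this is also where the bookkeeping between $k^*$- and $K^*$-valued cochains must be done carefully. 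Once this is in place, the defining condition of $Y$ is seen to be a condition on $\omega$ alone, exactly as in the proof of Theorem \ref{Teor:Ultimo}, completing the argument.
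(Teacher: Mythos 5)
Your proof of the second bullet is correct and is surely the intended one (the paper states this remark without proof): Theorem \ref{Teor:Ultimo} gives the exact sequence $1\to\Aut(\Gal(K|k))\to Y\to X\to 1$, and $\Aut(\mathbb{Z}/2\mathbb{Z})$ is trivial, so $r$ is an isomorphism.

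For the first bullet there is a genuine gap, located exactly at the step you flagged, and the step is not merely unproved but false. Writing $\sigma'=\sigma\,\delta(\mu)$ with $\mu\colon N\to K^*$, the total coboundary is $\partial(\mu)=\bigl(\delta(\mu),\,(s,x)\mapsto {}^{s}\mu(x)/\mu(x)\bigr)$, so $(\delta(\mu),1)$ is a coboundary in the double complex if and only if there exists $\nu$ with $\delta(\nu)=\delta(\mu)$ and ${}^{s}\nu=\nu$ for all $s\in S$, i.e.\ if and only if $\mu$ differs from a $k^*$-valued cochain by a character of $N$. Since $k$ contains a primitive root of unity of order $\exp(N)$, every character of $N$ is $k^*$-valued and hence Galois-fixed, so the mixed component ${}^{s}\mu/\mu$ cannot be cancelled: $(\delta(\mu),1)$ is a coboundary precisely when $[\sigma]=[\sigma']$ already holds in $H^2(N,k^*)$, which is strictly stronger than equality in $H^2(N,K^*)$. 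Concretely, take $k=\mathbb{R}$, $K=\mathbb{C}$, $N=(\mathbb{Z}/2\mathbb{Z})^2=\{0,e_1,e_2,e_1+e_2\}$, $\sigma$ the quaternion cocycle ($u_x^2=-1$ for $x\neq 0$) and $\sigma'$ the cocycle realized in $M_2(\mathbb{R})$ by $\mathrm{diag}(1,-1)$, $\left(\begin{smallmatrix}0&1\\-1&0\end{smallmatrix}\right)$, $\left(\begin{smallmatrix}0&1\\1&0\end{smallmatrix}\right)$. Then $\Alt(\sigma)=\Alt(\sigma')$ and $[\sigma]=[\sigma']$ in $H^2(N,\mathbb{C}^*)$, but $\mathbb{H}\not\cong M_2(\mathbb{R})$, so $[\sigma]\neq[\sigma']$ in $H^2(N,\mathbb{R}^*)$. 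For $g$ the swap of $e_1$ and $e_2$ there is a real-valued witness $\eta$ with $\sigma^g/\sigma=\delta(\eta)$, whereas any $\eta'$ with $\sigma'^g/\sigma'=\delta(\eta')$ satisfies $\eta'(e_1)^2=\eta'(e_2)^2=-1$, so $\overline{\eta'}/\eta'$ is the nontrivial character $\chi_0$ with $\chi_0(e_1)=\chi_0(e_2)=-1$, independently of the witness (witnesses differ by Galois-fixed characters). Unwinding membership in $Y$ as in the proof of Theorem \ref{Teor:Ultimo} (the $N$-component $m(a)$ of $g'(0\oplus a)$ is the unique $n$ with $\Alt(\sigma)(n,g(x))=\eta(x)/{}^{a}\eta(x)$), the automorphism $x\oplus a\mapsto g(x)\oplus a$ of $S$ lies in $Y_\sigma$ but not in $Y_{\sigma'}$: the two stabilizers are distinct subgroups of $\Aut_N(S)$.

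So your route establishes only the easy statement that $Y$ depends on $\sigma$ through its class in $H^2(N,k^*)$ (for $k^*$-valued $\mu$ one has $(\delta(\mu),1)=\partial(\mu)$ on the nose, so the classes in $H^2_S(N,K^*)$ literally coincide), and the example shows that at the level of $H^2(N,K^*)$ the remark can only hold in a weaker sense: $X$ visibly depends only on $[\sigma]\in H^2(N,K^*)$, and by Theorem \ref{Teor:Ultimo} $Y$ is an extension of $X$ by $\Aut(\Gal(K|k))$, so in particular when $\Gal(K|k)=\mathbb{Z}/2\mathbb{Z}$ the isomorphism type of $Y$ is governed by $[\sigma]\in H^2(N,K^*)$; in the example above $Y_\sigma$ and $Y_{\sigma'}$ are abstractly isomorphic but not equal. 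A smaller inaccuracy: you invoke Proposition \ref{Prop:exis de 2-cociclo para simplectco} for $K^*$-coefficients, which requires divisibility of the coefficient group; this holds for $K=\mathbb{C}$ but not for a general abelian extension $K|k$ in this section.
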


Let $(V,\omega)$ be a symplectic module of exponent two, for example, the  symplectic module associated to a symplectic linear space over finite field of characteristic two, see Example \ref{Ejemplo: modulo cuadratico de spacop cuadratico}. Since $\omega \in Z^2(V,\mathbb{R}^*)\subset Z^2(V,\mathbb{C}^*)$, it follows from by Theorem \ref{Teor:Ultimo} that the tuple $$(\widehat{V}\oplus \Gal(\mathbb{C}|\mathbb{R}), \mathbb{C},\omega,\gamma_\omega)$$ is a semi-real torsor datum over the group $$(\widehat{V}\oplus \Gal(\mathbb{C}|\mathbb{R})) \rtimes \Sp(V,\omega).$$ The  crossed product group $$(\widehat{V}\oplus \Gal(\mathbb{C}|\mathbb{R})) \#_\theta \Sp(V,\omega)$$ (See Proposition-Definition \ref{weil action}) contains the pseudo-symplectic groups and it is isocategorical over $\mathbb{Q}$ to $(\widehat{V}\oplus \Gal(\mathbb{C}|\mathbb{R})) \rtimes \Sp(V,\omega).$

The group $(\widehat{V}\oplus \Gal(\mathbb{C}|\mathbb{R})) \#_\theta \Sp(V,\omega)$ acts by algebra automorphisms on the rational simple algebra $\mathbb{Q}_{\omega}[V]$.   Once a simple module of $\mathbb{Q}_{\omega}[V]$ is fixed, we have a \textit{rational} projective representation of  $(\widehat{V}\oplus \Gal(\mathbb{C}|\mathbb{R})) \#_\theta\Sp(V,\omega)$  that extends the  rational Weil representations of the pseudo-symplectic groups $\Ps(V,q)$.

\bigbreak
\textbf{Acknowledgment.} The author would like to thank the Mathematics Departments at MIT where part of this work was carried out. The author is grateful to Paul Bressler and Pavel Etingof for  useful discussions. This research was partially supported by the FAPA funds from Vicerrector\'{i}a de Investigaciones de la Universidad
de los Andes.


\def\cprime{$'$}

\end{document}